 \def\@textbottom{\vskip \z@ \@plus 1pt}
 \let\@texttop\relax
\numberwithin{equation}{section}
\newcommand{\beqar}{\begin{eqnarray*}}
\newcommand{\eeqar}{\end{eqnarray*}}
\newcommand{\oldmarginpar}[1]{}
\begin{document}

\newcommand\J{\mathfrak{J}}
\newcommand\A{\mathbb{A}}
\newcommand\C{{\bf C}}
\newcommand\G{\mathbb{G}}
\newcommand\N{{\bf N}}
\newcommand{\T}{\mathbb{T}}
\newcommand{\cT}{\mathcal{T}}
\newcommand\sO{\mathcal{O}}
\newcommand\sR{\mathcal{R}}
\newcommand\sI{\mathcal{I}}
\newcommand\sE{{\mathcal{E}}}
\newcommand\tE{{\mathbb{E}}}
\newcommand\sF{{\mathcal{F}}}
\newcommand\sG{{\mathcal{G}}}
\newcommand\GL{{\mathrm{GL}}}
\newcommand{\HH}{\mathrm H}
\newcommand\mM{{\mathrm M}}
\newcommand\fS{\mathfrak{S}}
\newcommand\fP{\mathfrak{P}}
\newcommand\fQ{\mathfrak{Q}}
\newcommand\Qbar{{\bar{\Q}}}
\newcommand\sQ{{\mathcal{Q}}}
\newcommand\sP{{\mathbb{P}}}
\newcommand{\Q}{{\bf Q}}
\newcommand{\tH}{\mathbb{H}}
\newcommand{\Z}{{\bf Z}}
\newcommand{\R}{{\bf R}}
\newcommand{\F}{\mathbb{F}}
\newcommand{\D}{\mathfrak{D}}
\newcommand\gP{\mathfrak{P}}
\newcommand\Gal{{\mathrm {Gal}}}
\newcommand\SL{{\mathrm {SL}}}

\newcommand{\legendre}[2] {\left(\frac{#1}{#2}\right)}
\newcommand\iso{{\> \simeq \>}}
\newcommand{\cX}{\mathcal{X}}
\newcommand{\cJ}{\mathcal{J}}
\newtheorem{thm}{Theorem}
\newtheorem{theorem}[thm]{Theorem}
\newtheorem{cor}[thm]{Corollary}
\newtheorem{conj}[thm]{Conjecture}
\newtheorem{prop}[thm]{Proposition}
\newtheorem{lemma}[thm]{Lemma}
\theoremstyle{definition}
\newtheorem{definition}[thm]{Definition}
\theoremstyle{remark}
\newtheorem{remark}[thm]{Remark}
\newtheorem{example}[thm]{Example}
\newtheorem{claim}[thm]{Claim}
\newtheorem{lem}[thm]{Lemma}

\theoremstyle{definition}
\newtheorem{dfn}{Definition}

\theoremstyle{remark}

\theoremstyle{remark}
\newtheorem*{fact}{Fact}
\makeatletter
\def\imod#1{\allowbreak\mkern10mu({\operator@font mod}\,\,#1)}
\makeatother
\newcommand{\mcF}{\mathcal{F}}
\newcommand{\mcG}{\mathcal{G}}
\newcommand{\mcM}{\mathcal{M}}
\newcommand{\mcO}{\mathcal{O}}
\newcommand{\mcP}{\mathcal{P}}
\newcommand{\mcS}{\mathcal{S}}
\newcommand{\mcV}{\mathcal{V}}
\newcommand{\mcW}{\mathcal{W}}
\newcommand{\gS}{\mathcal{S}}
\newcommand{\cO}{\mathcal{O}}
\newcommand{\cC}{\mathcal{C}}
\newcommand{\mfa}{\mathfrak{a}}
\newcommand{\mfb}{\mathfrak{b}}
\newcommand{\mfH}{\mathfrak{H}}
\newcommand{\mfh}{\mathfrak{h}}
\newcommand{\mfm}{\mathfrak{m}}
\newcommand{\mfn}{\mathfrak{n}}
\newcommand{\mfp}{\mathfrak{p}}
\newcommand{\mfq}{\mathfrak{q}}
\newcommand{\cF}{\mathfrak{F}}
\newcommand{\mrB}{\mathrm{B}}
\newcommand{\mrG}{\mathrm{G}}
\newcommand{\gG}{\mathcal{G}}
\newcommand{\mrH}{\mathrm{H}}
\newcommand{\mH}{\mathrm{H}}
\newcommand{\mrZ}{\mathrm{Z}}
\newcommand{\Ga}{\Gamma}
\newcommand{\cyc}{\mathrm{cyc}}
\newcommand{\Fil}{\mathrm{Fil}}
\newcommand{\mrp}{\mathrm{p}}
\newcommand{\PGL}{\mathrm{PGL}}
\newcommand{\x}{{\mathcal{X}}}
\newcommand{\Sp}{\textrm{Sp}}
\newcommand{\ab}{\textrm{ab}}

\newcommand{\lra}{\longrightarrow}
\newcommand{\ra}{\rightarrow}
\newcommand{\rai}{\hookrightarrow}
\newcommand{\ras}{\twoheadrightarrow}

\newcommand{\repr}{\rho_{f,\wp}|_{G_p}}
\newcommand{\GRF}{{\rho}_{f,\wp}}

\newcommand{\lan}{\langle}
\newcommand{\ran}{\rangle}

\newcommand{\mo}[1]{|#1|}

\newcommand{\hw}[1]{#1+\frac{1}{2}}
\newcommand{\mcal}[1]{\mathcal{#1}}
\newcommand{\trm}[1]{\textrm{#1}}
\newcommand{\mrm}[1]{\mathrm{#1}}
\newcommand{\car}[1]{|#1|}
\newcommand{\pmat}[4]{ \begin{pmatrix} #1 & #2 \\ #3 & #4 \end{pmatrix}}
\newcommand{\bmat}[4]{ \begin{bmatrix} #1 & #2 \\ #3 & #4 \end{bmatrix}}
\newcommand{\pbmat}[4]{\left \{ \begin{pmatrix} #1 & #2 \\ #3 & #4 \end{pmatrix} \right \}}
\newcommand{\psmat}[4]{\bigl( \begin{smallmatrix} #1 & #2 \\ #3 & #4 \end{smallmatrix} \bigr)}
\newcommand{\bsmat}[4]{\bigl[ \begin{smallmatrix} #1 & #2 \\ #3 & #4 \end{smallmatrix} \bigr]}

\makeatletter
\def\imod#1{\allowbreak\mkern10mu({\operator@font mod}\,\,#1)}
\makeatother
\title{The Eisenstein cycles and Manin-Drinfeld properties}

\author{Debargha Banerjee}
\address{INDIAN INSTITUTE OF SCIENCE EDUCATION AND RESEARCH, PUNE, INDIA}
\author{Lo\"ic Merel}
\address{Universit\'e Paris Cit\'e and Sorbonne Universit\'e, CNRS, IMJ-PRG, F-75013 Paris, France}
\begin{abstract}
Let $\Ga$ be a subgroup of finite index of $\SL_2(\Z)$.
We give an analytic criterion for a cuspidal divisor to be torsion in the  Jacobian $J_\Ga$ of the corresponding modular curve $X_\Ga$. By Belyi's theorem, such a criterion would apply to any curve over a number field. Our main tool is the explicit description, in terms of modular symbols, of what we call {\em Eisenstein cycles}. The latter are representations of relative homology classes over which integration of any holomorphic differential forms vanishes.
Our approach relies in an essential way on the specific case $\Ga\subset\Ga(2)$, where we can consider convenient generalized  Jacobians instead of $J_\Ga$. 
The Eisenstein classes are the real part of certain homology classes with complex coefficients. The imaginary part of those classes are related to the scattering constants attached to Eisenstein series. 
Finally, we illustrate our theory by considering Fermat curves. 

 \end{abstract}

\subjclass[2010]{Primary: 11F67, Secondary: 11F11, 11F20, 11F30}
\keywords{Eisenstein series, Modular symbols, Special values of $L$-functions}
\maketitle

\section{Introduction}
\label{intro}
Let $\Ga$ be a subroup of finite index of $\SL_2(\Z)$, acting on the upper half-plane $\tH$. Consider the modular curve $Y_{\Gamma}=\Gamma \backslash \tH$.  Let $\partial_{\Ga}=\Ga\backslash \sP^1(\Q)$ be the set of cusps and $X_{\Gamma}$ be the compact modular curve obtained from $Y_{\Gamma}=\Gamma \backslash \tH$ by adding the cusps. 
We have a long exact sequence of relative homology: 
\[
0  \rightarrow  \HH_1(X_{\Ga};\Z) \rightarrow 
\HH_1(X_{\Gamma}, \partial_\Ga;\Z) \xrightarrow[]{\delta} \Z[ \partial_{\Ga}]
 \rightarrow  \Z \rightarrow 0,
\]
where the map $\HH_1(X_{\Gamma}, \partial_\Ga;\Z) \xrightarrow[]{\delta} \Z[ \partial_{\Ga}]$ is obtained from the boundary map $\HH_1(X_{\Gamma}, \partial_\Ga;\Z) \rightarrow \HH_0( \partial_\Ga;\Z)$.
Let
 $\HH^0(X_{\Gamma}, \Omega^1)$ 
denote the complex vector space of global sections of the sheaf of holomorphic differentials on the Riemann surface $X_{\Gamma}$. 
Let $D=\sum_{j\in \partial_\Ga}m_j[j]$ be a divisor of degree $0$ supported on $\partial_{\Ga}$.  
We call {\it Eisenstein classes} corresponding to $D$ to be the unique element $\sE_D\in \HH_1(X_{\Gamma}, \partial_{\Ga}; \R)$ such that $\int_{\sE_D}\omega=0$ for all $\omega\in \HH^0(X_{\Gamma}, 
\Omega^1)$ and $\delta(\sE_D)=D$. We will determine this class in this article, with the following application in mind: the divisor $D$ is torsion in the  Jacobian $J_\Gamma$ of $X_\Ga$ if and only if $\sE_D\in \HH_1(X_{\Gamma}, \partial_{\Ga}; \Q)$. 

It is known since Manin and Drinfeld that $D$ is torsion when $\Ga$ is a congruence subgroup \cite{MR0318157,MR0314846}. Scholl ~\cite{MR809492}, Murty-Ramakrishnan~\cite{MR983619} and recently Burrin ~\cite{Burrin} have given criteria for $D$ being torsion in $J_\Gamma$, without appealing to $\sE_D$.
Our approach to this question is different from those authors and is a continuation of \cite{DebarghaMerel} and \cite{MR1405312} (where we limited ourselves to congruence subgroups).  

We impose a more specific setting (as we will see, without losing generality), in connection with the theory of Belyi maps and dessins d'enfants, see section \ref{Belyi}. We assume in this article that $\Ga$ is contained in $\Ga(2)$ (the principal congruence subgroup of level $2$) and that $-{\rm Id}\in\Ga$. The corresponding modular curve $X(2)=X_{\Ga(2)}$ has three cusps corresponding to the classes of $0$, $1$ and $\infty$. Denote by $\partial_{\Ga}^{+}$ (resp. $\partial_{\Ga}^{-}$) the cusps above either $0$ or $\infty$ (resp. above $1$) via the canonical morphism $X_{\Ga}\rightarrow X(2)$. 

We reformulate the question of the Eisenstein cycles in the mixed homology group $\HH_1(X_{\Ga}-\partial_{\Ga}^{-}, \partial_{\Ga}^{+};{\bf R})$ and its dual $\HH_1(X_{\Ga}-\partial_{\Ga}^{+}, \partial_{\Ga}^{-};{\bf R})$. 
Let $\alpha$, $\beta$ be rational numbers with odd numerators and denominators (resp. with either even numerator or even denominator). Denote by ${\{\alpha,\beta\}}^-$ (resp. $\{\alpha,\beta\}^{+}$) the class in $\HH_1(X_{\Ga}-\partial_{\Ga}^{+}, \partial_{\Ga}^{-}; {\bf R})$ (resp. $\HH_1(X_{\Ga}-\partial_{\Ga}^{-}, \partial_{\Ga}^{+}; {\bf R})$) of the geodesic path in $\tH$ from $\alpha$ to $\beta$. 
Such a consideration is justified by the fact that the elements $\{g0,g\infty\}^{+}$ and $\{g(1),g(-1)\}^{-}$ form bases of the free $\Z$-modules $\HH_1(X_{\Ga}-\partial_{\Ga}^{-}, \partial_{\Ga}^{+};  \Z)$ and $\HH_1(X_{\Ga}-\partial_{\Ga}^{+}, \partial_{\Ga}^{-}; \Z)$ 
respectively when $g$ runs though a set of representatives of $\Ga\backslash\Ga(2)$ \cite{MR1405312}, and see section \ref{MC}.

For $j\in \partial_{\Ga}$, let $\Ga_{j}$ the stabilizer of $\sigma_{j}(\infty)$ in $\Ga$. For $x\in\sP^1(\Q)$, such that $m_{\Ga x}=0$, set 
\[
\gS_D(x)=\frac{1}{2\pi i}\lim_{s\rightarrow 1^{+}}\sum_{j\in\partial_\Ga}m_j\sum_{r=1}^{\infty}\sum_{\sigma \in \Ga_j \backslash \Ga \slash \Ga_j} \frac{e^{2\pi ir(x+\frac{d}{c})}}{|c|^{2s}},
\]
where the entries $c$ and $d$ are such that  $\sigma_j^{-1} \sigma \sigma_j=\left(\begin{smallmatrix} \star & \star\\
c & d\\
\end{smallmatrix}\right)$ and $c$ is required to be $>0$.
The internal sum over $\sigma$ is sometimes called a Kloosterman zeta function  \cite[Chapter 9, p. 121]{MR1942691}, \cite[p. 14]{MR809492}. {\it A priori}, such Dirichlet series converge absolutely for $\Re(s)>1$. Their  values at $1$ exist since $D$ is of degree $0$.

Let $D^{+}\in\text{Div}^0(\partial_{\Ga}^{+})$ (resp. $D^{-}\in\text{Div}^0(\partial_{\Ga}^{-})$). Define $F_{D^{+}}$ and  $F_{D^{+}}$: $ \Ga(2) \rightarrow \C$ by 
\[
F_{D^{+}}(h) =
\gS_{D^{+}}(h(-1))-\gS_{D^{+}}(h(1))
\hskip1truecm{\rm and }\hskip1truecm
F_{D^{-}}(h) =\gS_{D^{-}}(h(\infty))-\gS_{D^{-}} (h(0)).
\]
We will see that those $\Ga$-invariant quantities can be understood as periods of certain Eisenstein series.
Set 
$$
\sE_{D^{+}}= \sum_{g \in \Ga \backslash \Ga(2)} F_{D^{+}}(g) \{g0,g\infty\}^{+}
\hskip1truecm{\rm and }\hskip1truecm
\sE_{D^{-}}= \sum_{g \in \Ga \backslash \Ga(2)} F_{D^{-}}(g) \{g(1),g(-1)\}^{-}
$$
in $\HH_1(X_{\Ga}-\partial_{\Ga}^{-}, \partial_{\Ga}^{+};  {\C})$ and $\HH_1(X_{\Ga}-\partial_{\Ga}^{+}, \partial_{\Ga}^{-}; {\C})$ respectively.
We decompose them into their real and imaginary parts:
$\sE_{D^{+}}=\sR_{D^{+}}+\frac{1}{2\pi i}\sI_{D^{+}}$
and 
$\sE_{D^{-}}=\sR_{D^{-}}+\frac{1}{2\pi i}\sI_{D^{-}}$.

\begin{thm}
\label{Eisensteincycles}
For all differential forms of the third kind $\omega$ on $X_{\Ga}$ with poles in $\partial_{\Ga}^{-}$ (resp. $\partial_{\Ga}^{+}$), one has $\int_{\sR_{D^{+}}}\omega=0$ (resp. $\int_{\sR_{D^{-}}}\omega=0$) and the boundary of $\sR_{D^{+}}$ (resp. $\sR_{D^{-}}$) is $-D^{+}$ (resp.  $-D^{-}$). Moreover $\sE_{D^{+}}$ (resp. $\sE_{D^{-}}$) has boundary $D^{+}$ (resp. $D^{-}$).
\end{thm}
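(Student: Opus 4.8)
The plan is to reduce the two statements (for $\sE_{D^+}$ and $\sE_{D^-}$) to a single computation by duality, treat the case of $\sE_{D^+}$ in detail, and verify separately the vanishing of periods and the boundary identity. For the boundary statements, I would compute $\delta\{g0,g\infty\}^+$ and $\delta\{g(1),g(-1)\}^-$ in the respective relative homology groups directly from the description of these classes as images of geodesic paths: the boundary of $\{g0,g\infty\}^+$ in $\HH_1(X_\Ga-\partial_\Ga^-,\partial_\Ga^+;\C)$ is $[g\infty]-[g0]$, supported on $\partial_\Ga^+$. Then $\delta\sE_{D^+}=\sum_{g}F_{D^+}(g)([g\infty]-[g0])$, and the claim is that this equals $D^+$. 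Here one inserts the definition $F_{D^+}(g)=\gS_{D^+}(g(-1))-\gS_{D^+}(g(1))$ — wait, that is for $F_{D^-}$; for $F_{D^+}$ one uses $\gS_{D^+}$ evaluated at appropriate cusps — and one must recognize the resulting sum over $\Ga\backslash\Ga(2)$, after collapsing the $\Ga$-orbits, as a telescoping/character sum that picks out exactly the coefficients $m_j$ of $D^+$. The key input is the defining property of $\gS_D$: it is built from Kloosterman zeta functions whose value at $s=1$, once summed against the geodesic classes, reproduces the divisor; concretely one uses that $\gS_D(x)$ changes by $m_{\Ga x}$-type jumps as $x$ crosses cusps, which is precisely what makes $F_{D^+}$ a ``primitive'' of $D^+$.

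Next, for the vanishing of periods: a differential of the third kind $\omega$ on $X_\Ga$ with poles only in $\partial_\Ga^-$ is holomorphic on $X_\Ga-\partial_\Ga^-$, hence pairs with $\HH_1(X_\Ga-\partial_\Ga^-,\partial_\Ga^+;\R)$. I would expand $\omega$ in terms of the basis of Eisenstein series of weight $2$ for $\Ga$ that are holomorphic away from $\partial_\Ga^-$, together with the holomorphic differentials (which contribute $\HH^0(X_\Ga,\Omega^1)$). The integral $\int_{\sE_{D^+}}\omega=\sum_g F_{D^+}(g)\int_{\{g0,g\infty\}}\omega$ is then a sum of modular-symbol periods of Eisenstein series, and the content is that $\sum_g F_{D^+}(g)\int_{\{g0,g\infty\}}\omega$ equals the contribution coming only from the residues of $\omega$ — i.e. the real part $\sR_{D^+}$ kills the holomorphic part and the non-residue part of the third-kind part. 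This is where one must identify $F_{D^+}$ with periods of Eisenstein series: the quantity $\gS_D(x)$ is, by design, $\frac{1}{2\pi i}$ times a period of an Eisenstein series of the second/third kind, so that $\int_{\{g0,g\infty\}}\omega$ for $\omega$ a third-kind Eisenstein differential reproduces $F$-type expressions. Splitting into real and imaginary parts, the real part $\sR_{D^+}$ is the one annihilating all holomorphic forms (these have real period pairing making the split clean), while the imaginary part $\frac{1}{2\pi i}\sI_{D^+}$ carries the scattering-constant information; comparing with the defining characterization of the Eisenstein class $\sE_D$ in the introduction then yields $\int_{\sR_{D^+}}\omega=0$.

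The statement for $\sE_{D^-}$ follows by the same argument after swapping the roles of $\partial_\Ga^+$ and $\partial_\Ga^-$, $\{g0,g\infty\}^+$ and $\{g(1),g(-1)\}^-$, and $0,\infty$ versus $1,-1$; the group $\HH_1(X_\Ga-\partial_\Ga^+,\partial_\Ga^-;\R)$ is dual to $\HH_1(X_\Ga-\partial_\Ga^-,\partial_\Ga^+;\R)$ under the intersection pairing, and the two families of geodesic classes are the dual bases indexed by $\Ga\backslash\Ga(2)$, so the combinatorics is symmetric. I expect the main obstacle to be the period identification: establishing rigorously that the Kloosterman-zeta expression $\gS_D(x)$ — evaluated at $s=1$, where convergence is delicate and must be handled by Eisenstein-series continuation rather than the naive Dirichlet series — computes the modular-symbol period $\int_{\{?,?\}}\omega_{\mathrm{Eis}}$ of the relevant weight-$2$ Eisenstein differential, and then correctly bookkeeping which part (real vs. imaginary) pairs trivially with holomorphic forms versus with the residual/third-kind directions. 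Once that dictionary is in place, both the boundary computation and the vanishing are formal consequences of the defining property of $\sE_D$ recalled in the introduction.
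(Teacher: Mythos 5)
Your proposal correctly identifies the dictionary that drives the argument --- namely that $F_{D^{+}}(g)$ is a period $\int_{g(1)}^{g(-1)}G_{D^{+}}(z)\,dz=\frac{1}{2\pi i}\int_{\xi^{-}(g)}\omega_{D^{+}}$ of the weight-two Eisenstein differential (this is Proposition \ref{periods} in the paper, proved via the Kronecker limit formula and the $q$-expansion of $G_{D}$, not by ``jumps'' of $\gS_{D}$ across cusps). Granting that dictionary, your boundary computation is essentially the paper's: one groups the terms $F_{D^{+}}(g)([\Ga g\infty]-[\Ga g0])$ by cusp and applies the residue theorem to the resulting full loops $\lambda^{-}(j)$ around each cusp $j$, which is what actually ``picks out the coefficients $m_j$.'' But for the vanishing statement there is a genuine gap. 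The assertion that $\sum_{g}F_{D^{+}}(g)\int_{g0}^{g\infty}\omega$ reduces to ``the contribution coming only from the residues of $\omega$'' is a restatement of the conclusion, not an argument; and your closing step --- ``comparing with the defining characterization of the Eisenstein class $\sE_{D}$ in the introduction'' --- is circular, since the theorem is precisely what establishes that $\sR_{D^{+}}$ satisfies that characterization. The paper's actual mechanism is an inner-product identity: the translates of the fundamental quadrangle of $X(2)$ quadrangulate $X_{\Ga}$, Stokes' theorem on this quadrangulation gives $\langle f_{+},f_{-}\rangle=\frac{1}{12i[\Ga(2):\Ga]}\sum_{g}\int_{g(1)}^{g(-1)}f_{+}\,dz\,\overline{\int_{g0}^{g\infty}f_{-}\,dz}$ (Theorem \ref{pinnerprodimp}), and the Petersson product of two canonical differentials of the third kind vanishes because such a differential is $\partial h$ for $h$ harmonic (Proposition \ref{orthogonality}). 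This is the analytic input your sketch does not supply, and without it the vanishing of $\int_{\sE_{D^{+}}}\omega$ is unproved.

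A second missing ingredient is the passage from $\sE_{D^{+}}$ to its real part $\sR_{D^{+}}$. You say the real/imaginary split is ``clean'' because holomorphic forms have real period pairing, but that is not the point: one needs to know that $\sI_{D^{+}}$ lies in the image of $\lambda^{+}$, i.e.\ is a real combination of small loops around the cusps of $\partial_{\Ga}^{-}$ (Proposition \ref{imaginary}). That fact is what forces $\int_{\sI_{D^{+}}}\omega_{0}=0$ for every holomorphic $\omega_{0}$, hence $\int_{\sR_{D^{+}}}\omega_{0}=0$; combined with the intersection-pairing identities $\sE_{D^{+}}\bullet\sE_{D^{-}}=0$ and $\sI_{D^{+}}\bullet\sI_{D^{-}}=0$ it yields $\sR_{D^{+}}\bullet\sR_{D^{-}}=0$ and finally $\int_{\sR_{D^{+}}}\omega=0$ for a general third-kind $\omega$ with poles in $\partial_{\Ga}^{-}$. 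To repair your proposal you would need to prove both the quadrangulation inner-product formula (or an equivalent orthogonality statement) and the identification of $\sI_{D^{\pm}}$ with boundary loops; neither follows formally from the definitions you invoke.
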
 

It follows that the images of $\sR_{D^{+}}$ and $\sR_{D^{-}}$ in $\HH_1(X_{\Gamma}, \partial_{\Ga}; {\bf R})$ are Eisenstein classes. They are the classes of what we call {\it Eisenstein cycles}. We will explain in section~\ref{GeneralEisensteinclasses} how to derive all Eisenstein classes in $\HH_1(X_{\Gamma}, \partial_{\Ga}; {\bf R})$  from such images. We show in section \ref{scattering} that $\sI_{D^{-}}$ and $\sI_{D^{+}}$ are deduced from the scattering constants, obtained from specializing scattering matrices of Eisenstein series at $s=1$.

We have a canonical map $\lambda^+$ : $\HH_{0}(\partial_{\Ga}^{-},\R)\rightarrow \HH_1(X_{\Ga}-\partial_{\Ga}^{-}, \partial_{\Ga}^{+},  \R)$  (resp. 
$\lambda^-$ : $\HH_{0}(\partial_{\Ga}^{+},\R)\rightarrow \HH_1(X_{\Ga}-\partial_{\Ga}^{+}, \partial_{\Ga}^{-},  \R)$) dual to the boundary map. Its image contains $\sI_{D^{+}}$ (resp. $\sI_{D^{-}}$), see section \ref{proof}.
Let $J_\Gamma^-$ (resp. $J_\Gamma^+$) be the generalized  Jacobian of $X_\Gamma$ with respect to the set $\partial_{\Ga}^{-}$ (resp. $\partial_{\Ga}^{+}$).
\begin{theorem}
 \label{Manin-Drinfeld}
 The class of $D^{+}$ (resp. $D^{-}$) in $J_\Gamma^-$ (resp. $J_\Gamma^+$) is torsion if and only if for every $g \in \Ga \backslash \Ga(2)$, one has $F_{D^{+}}(g)\in \Q$ (resp. $F_{D^-}(g)\in \Q$).
 
The class of  $D^{+}$ (resp. $D^{-}$) in $J_\Gamma$ is torsion if and only if one has $\sum_{g \in \Ga \backslash \Ga(2)} F_{D^{+}}(g)[g]\in \Q[ \Ga \backslash \Ga(2)]+\lambda^+(\R[\partial_\Gamma^-])$ (resp. $\sum_{g \in \Ga \backslash \Ga(2)} F_{D^{-}}(g)[g]\in \Q[ \Ga \backslash \Ga(2)]+\lambda^-(\R[\partial_\Gamma^+])$.
 
 \end{theorem}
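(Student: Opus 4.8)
The plan is to combine the complex-analytic uniformisation of the generalised Jacobians with Theorem~\ref{Eisensteincycles}. Recall that $J_\Gamma^-(\mathbf C)\cong\HH^0(X_\Gamma,\Omega^1(\partial_\Gamma^-))^\vee/\Lambda^-$, where $\Omega^1(\partial_\Gamma^-)$ is the sheaf of differentials of the third kind with at most simple poles along $\partial_\Gamma^-$ and the period lattice $\Lambda^-$ is the image of $\HH_1(X_\Gamma-\partial_\Gamma^-;\Z)$ under integration; this lattice has full rank $2g+|\partial_\Gamma^-|-1$, so the period map is injective already on $\HH_1(X_\Gamma-\partial_\Gamma^-;\R)$. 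The Abel--Jacobi map sends $D^+\in\mathrm{Div}^{0}(\partial_\Gamma^+)$ to the functional $\omega\mapsto\int_c\omega$ for any integral $1$-chain $c\subset X_\Gamma-\partial_\Gamma^-$ with $\partial c=D^+$, and, $J_\Gamma^-$ being semi-abelian with this uniformisation, $D^+$ is torsion in $J_\Gamma^-$ if and only if that functional lies in $\Q\otimes\Lambda^-$.

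For the first assertion: if all $F_{D^+}(g)\in\Q$, then $\sE_{D^+}=\sum_g F_{D^+}(g)\{g0,g\infty\}^+$ is a rational, hence real, relative cycle, so $\sE_{D^+}=\sR_{D^+}$, with $\delta\sE_{D^+}=D^+$; by Theorem~\ref{Eisensteincycles} together with the fact that the image of $\sR_{D^+}$ in $\HH_1(X_\Gamma,\partial_\Gamma;\R)$ is an Eisenstein class, $\int_{\sE_{D^+}}\omega=0$ for every $\omega\in\HH^0(X_\Gamma,\Omega^1(\partial_\Gamma^-))$. Clearing denominators yields an integral $1$-chain $c\subset X_\Gamma-\partial_\Gamma^-$ with $\partial c=ND^+$ on which all of $\HH^0(X_\Gamma,\Omega^1(\partial_\Gamma^-))$ integrates to $0$, so the Abel--Jacobi class of $ND^+$ vanishes and $D^+$ is torsion. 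Conversely, fix an integral chain $c_0$ with $\partial c_0=D^+$; since $\delta\sR_{D^+}=-D^+$ and $\int_{\sR_{D^+}}\omega=0$ on $\HH^0(X_\Gamma,\Omega^1(\partial_\Gamma^-))$, the chain $c_0+\sR_{D^+}$ is a \emph{closed} real cycle in $X_\Gamma-\partial_\Gamma^-$ with the same period functional as $c_0$. Hence $D^+$ is torsion in $J_\Gamma^-$ iff that functional lies in $\Q\otimes\Lambda^-$, which by injectivity of the period map on $\HH_1(X_\Gamma-\partial_\Gamma^-;\R)$ happens iff $c_0+\sR_{D^+}$, and therefore $\sR_{D^+}$, has rational coordinates in the basis $\{g0,g\infty\}^+$. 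It remains to note that $\sR_{D^+}$ rational forces $\sI_{D^+}=0$: then $D^+$ is torsion in $J_\Gamma^-$, so $ND^+=\operatorname{div}(f)$ with $f\equiv1$ along $\partial_\Gamma^-$, the attached Eisenstein differential of the third kind is $\tfrac1N\,d\log f$, and its scattering contribution along $\partial_\Gamma^-$ --- computed in section~\ref{scattering} as a constant term of $\log f$ at cusps where $f=1$ --- vanishes; so $\sE_{D^+}=\sR_{D^+}$ is rational, i.e.\ all $F_{D^+}(g)\in\Q$.

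For the second assertion, pass to the isogeny $J_\Gamma^-\twoheadrightarrow J_\Gamma$, whose kernel is the torus dual to $\mathrm{Div}^{0}(\partial_\Gamma^-)$; on the Lie-algebra side this removes the residue-at-$\partial_\Gamma^-$ directions, and on the homology side it amounts to working in $\HH_1(X_\Gamma-\partial_\Gamma^-,\partial_\Gamma^+;\R)$ modulo the meridians around $\partial_\Gamma^-$, i.e.\ modulo $\lambda^+(\R[\partial_\Gamma^-])$. Running the first-part argument in this quotient (the period lattice of $J_\Gamma$ being of full rank as well), $D^+$ is torsion in $J_\Gamma$ iff the class of $\sE_{D^+}$ is rational modulo $\lambda^+(\R[\partial_\Gamma^-])$, that is $\sum_g F_{D^+}(g)[g]\in\Q[\Gamma\backslash\Gamma(2)]+\lambda^+(\R[\partial_\Gamma^-])$; the scattering part $\sI_{D^+}$, which lies in $\lambda^+(\R[\partial_\Gamma^-])$, is simply absorbed.

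The main obstacle is the bridge between the analytic quantities $F_{D^+}(g)$ --- defined by Kloosterman-type Dirichlet series --- and the geometry of the period lattice: one needs the identification (established earlier) of the $F_{D^+}$ with periods of the Eisenstein differential of the third kind attached to $D^+$, a precise analysis of its scattering term along $\partial_\Gamma^-$ (that it sits in $\lambda^+(\R[\partial_\Gamma^-])$ and vanishes exactly when $D^+$ is torsion in $J_\Gamma^-$), and the full rank of the period lattice of a generalised Jacobian, which lets rationality of periods descend to rationality of the underlying mixed-homology class. The bookkeeping of the factors $1/2\pi i$ and of chain representatives for the relative cycles has to be tracked with care throughout.
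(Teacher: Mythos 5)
Your argument is correct and rests on the same two pillars as the paper's: the complex-analytic uniformisation of the generalized Jacobian and the identification, via Theorem~\ref{Eisensteincycles}, of the Abel--Jacobi functional of $D^+$ with the one attached to $\sE_{D^+}$. The paper packages this as Proposition~\ref{generalized Jacobian} (the class of $D^+$ in $J_\Gamma^-(\C)\simeq(\HH_1(X_\Gamma-\partial_\Gamma^-;\R)+i\lambda^+(\HH_0(\partial_\Gamma^-;\R)))/\HH_1(X_\Gamma-\partial_\Gamma^-;\Z)$ is $-\sE_{D^+}$) and Corollary~\ref{torsion}; there, torsion immediately forces the $i\lambda^+$-component, i.e.\ $\frac{1}{2\pi i}\sI_{D^+}$, to vanish, simply because those directions carry no lattice. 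You instead obtain $\sI_{D^+}=0$ from the modulus condition: $ND^+=\operatorname{div}(f)$ with $f\equiv1$ on $\partial_\Gamma^-$ forces $\log|f|$ to take equal values at the cusps of $\partial_\Gamma^-$, and by Proposition~\ref{imaginary} (which is the cleaner reference here, rather than section~\ref{scattering}) this kills $\sI_{D^+}$. Both mechanisms work; the paper's is shorter once Proposition~\ref{generalized Jacobian} is in place, while yours is more self-contained and makes visible why the modulus is responsible for the imaginary part. One sentence of yours is logically garbled and should be repaired: ``$\sR_{D^+}$ rational forces $\sI_{D^+}=0$'' is not the implication you actually use (and is not true on its own); in the only-if direction the standing hypothesis is that $D^+$ is torsion in $J_\Gamma^-$, and it is \emph{that} hypothesis which produces the function $f$ and hence the vanishing of $\sI_{D^+}$. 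Your treatment of the second assertion --- quotienting by the toric part, equivalently by $\lambda^+(\R[\partial_\Gamma^-])$, which is the kernel of $\HH_1(X_\Gamma-\partial_\Gamma^-,\partial_\Gamma^+;\Z)\rightarrow\HH_1(X_\Gamma,\partial_\Gamma^+;\Z)$ --- is exactly the paper's deduction of the second part from the first.
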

The first criterion is similar to the following statement: $e^{2\pi iz}$ is torsion in $\C^{\times}$ if and only if the real part of $z$ is rational, and the imaginary part of $z$ is $0$.
Note that the first statement is substantial even if $X_\Ga$ has genus $0$.  
The second criterion can be derived from the first. The maps $\lambda^+$ and $\lambda^-$ are computed explicitly by proposition \ref{boundary}.

The principal weakness of our approach resides in the fact that the functions $F_{D^{+}}$ and $F_{D^{-}}$ are not easily computable for two reasons : {\it (i)} the convergence of Dirichlet series is notoriously difficult to understand, and {\it (ii)} we have no convenient way to pass from a combinatorial description of $P\Ga(2)=\Ga(2)/\{\pm 1\}$ as a subgroup of the free group on two generators $P\Ga(2)$ to a description of the elements of $\Ga$ as $2\times2$ matrices. But it is expected that the Eisenstein classes are of a transcendental nature in general, and we hope that our method cast a new light on this nature.

We computed the functions $F_D^{+}$ and $F_D^{-}$ when $\Ga$ is congruence subgroup  \cite{MR3251709,Pacific,DebarghaMerel}. Here is  a classical instance of a non-congruence subgroup where the computation of these formulas is possible.
 
Let $N$ be an integer $>0$. Let $\Phi_N$ be the kernel of the composed maps $\Ga(2)\rightarrow\Ga(2)^{\rm ab}\rightarrow\Ga(2)^{\rm ab}/N$, where the first map is the abelianization. It is a non-congruence subgroup, when $N>8$. The corresponding modular curve $X_{\Phi_N}$ is the $N$-th Fermat curve ({\it i.e.} it admits the model $X^N+Y^N=1$, the cusps correspond to the points such that $X^N=0$, $1$ or $\infty$). 
Rohrlich has shown that every cuspidal divisor of degree $0$ is torsion in $J_{\Phi_N}$ \cite{MR0441978}. As an application of our theory, we describe the cuspidal subgroups of $J_{\Phi_N}^+$ and $J_{\Phi_N}^-$; as expected they are finite.

 \subsection{Acknowledgements}
It is a pleasure to acknowledge several e-mail communication, advice and remark of Professors Rohrlich and  Professor Kumar Murty. 
The first author was partially supported by the SERB grant MTR$/2017/000357$ and CRG$/2020/000223$.

\section{The combinatorics of Belyi maps}
 \label{Belyi}
 
 Let $X$ be a smooth projective curve over ${\C}$. 
 A {\it Belyi} map is a morphism $f:  X\rightarrow\sP^1(\C)$ unramified outside $0$, $1$, $\infty$  \cite{MR1757192}.
 
 A {\it dessin d'enfant} \cite{MR1483107} is a connected graph, composed of a finite set of vertices  $V$ and a finite set of edges $E$, with the additional structures :
 \begin{itemize}
  \item a map from $V$ to the pair $\{0,\infty\}$ (the graph is bicolored)
  \item the extremities of a given edge have different colors
  \item the set of of edges adjacent to a given vertice is endowed with a cyclic ordering ({\it i.e.} a transitive action of $\Z$).
\end{itemize}
 
The dessin d'enfant associated to $f$ is the bicolored graph $G$ whose vertices of color $0$ (resp. $\infty$) constitute $f^{-1}(0)$ (resp. $f^{-1}(\infty)$).
The edges of the graph are the components the inverse images by $f$ of the arc form $\infty$ to $0$ in $\sP^1(\C)$ (we depart here from the usual convention, where $0$ and $1$ are used instead of $0$ and $\infty$). 

Consider the function on the upper half-plane  \cite[p. 86 and p. 105]{MR808396}:
$$
\lambda(z)=16 e^{2\pi iz} \prod_{n \geq 1} \left( \frac{1+e^{2n\pi iz}}{1+e^{(2n-1)\pi iz}}\right)^8.
$$
It identifies the modular curve $X(2)$ to the projective line. We resist the tentation to use $1/\lambda$; instead we follow here the customary convention so that $\lambda(\infty)=0$, $\lambda(0)=\infty$, $\lambda(1)=\lambda(-1)=1$.
Thus the arc from $\infty$ to $0$ on the projective line is the image by $\lambda$ of the arc from $0$ to $\infty$ in the upper half-plane.

Since $\Gamma(2)/\{-1,1\}$ identifes to the fundamental group of the affine modular curve $Y(2)$, and the upper half-plane is the universal covering of $Y(2)$, the morphism $f$ can be regarded as the morphism $X_{\Ga}\rightarrow X(2)$, for an appropriate subgroup $\Ga\subset\Ga(2)$ of finite index, and containing $-{\rm Id}$. The group $\Gamma(2)/\{-1,1\}$ is freely generated by $\{A,B\}$ where $A$ is the image of $\left(\begin{smallmatrix}
1 & 2\\
 0 & 1\\
\end{smallmatrix}\right)$ and $B$ is the image of $\left(\begin{smallmatrix}
1 & 0\\
2 & 1\\
\end{smallmatrix}\right)$. 

The dessin d'enfant attached to $X_\Ga$ is the graph whose vertices are the cusps of $X_\Gamma$ above $0$ (of colour $\infty$) and $\infty$ (of colour $0$). The edges are the translates by $\Ga(2)$ of the image in $X_\Ga$ of the arc from $0$ to $\infty$ (in the upper half-plane). The action of $\Z$ on the edges attached to a vertice of color $0$ (resp. $\infty$) follows from the action of $A$ (resp. $B^{-1}$). 

Conversely, $\Gamma$ can be recovered as follows from the dessin. It acts on the edges of the dessin: consider an edge $e$ attached to a pair of vertices $v_e(0)$ and $v_e(\infty)$ (coloured $0$ and $\infty$ respectively), the image of $e$ by $A$ (resp. $B$) is the successor (resp. predecessor) of $e$ along the vertice $v_e(0)$ (resp. $v_e(\infty)$). The stabilizers of all edges constitute $\Gamma$. Thus a dessin d'enfant encodes the situation considered in this article.

{\it Question}: Is there a natural interpretation of the Kloosterman zeta function purely in terms of the geometry of the dessin, without involving the entries of matrices in $\Ga(2)$?

\section{Preliminaries and previous work}
\label{Riemann}
Let $X$ be a compact, connected, non-empty Riemann surface. Let $S$ be a finite subset of $X$. Let $D$ be a divisor of degree $0$ supported on $S$. Recall that a differential of the third kind on $X$ is a meromorphic differential whose poles are simple and whose residues are integers. 

Let $\omega$ be a differential of the third kind on $X$, with divisor of poles equal to $D$. It exists by the Riemann Roch theorem.
There exists a unique holomorphic differential form $\omega'\in \Omega^{1}(X)$ such that, for every $c\in {\HH}_{1}(X, S; \Z)$, one has ${\rm Re}(\int_{c}\omega)={\rm Re}(\int_{c}\omega')$. 
Therefore there exists a unique differential of the third kind $\omega_{D}$ of residue divisor $D$ such that ${\rm Re}(\int_{c}\omega_{D})=0$ for every  $c\in {\HH}_{1}(X, S; \Z)$.
This is the {\it canonical differential of the third kind} associated to $D$. The notion extends obviously to the situation where $D$ is a divisor of degree $0$ with real coefficients.

Let $f_D$ be a multivalued function on $X-S$ such that $df_D=\omega_{D}$. Since the periods of $\omega_{D}$ are imaginary, the real part $g_{D}$ of $f_D$ is single-valued and harmonic.

We turn now to the specific situation of modular curves. Let $\Gamma$ be a subgroup of finite index of the modular group.  
The cases of noncongruence subgroups has been studied since Atkin and Swinnerton-Dyer~\cite{MR0337781}.  
Suppose $X$ is the modular curve $X_{\Gamma}$ associated to $\Gamma$, and $S$ is the set $ \partial_{\Ga}$ of cusps.
The pullbacks to the upper half-plane of differential of the third kind care of the form $2\pi if(z)\,dz$, where $f$ is a holomorphic modular form of weight $2$ for $\Gamma$.
The pullback of the {\it canonical differential of the third kind} associated to $D$ is of the form $2\pi iG_{D}(z)\,dz$, where $G_{D}$ is by definition the {\it Eisenstein series} associated to $D$.

The Eisenstein series $G_{D}$ has been determined explicitly see section  \ref{EC}. We find convenient to follow Scholl's account, which in turn follows Hecke, Selberg, Kubota etc.

Let $j\in \partial_{\Ga}$ and choose $\sigma_{j}\in \SL_2(\Z)$ such that $\sigma_{j}\infty\in j$ and denote by $\Ga_{j}$ the stabilizer of $\sigma_{j}\infty$ in $\Ga$. 
For $s\in \C$, with ${\rm Re}(s)>1$ and $z=x+iy\in \tH$, set
\[
E_{j}(z,s)=\sum_{\sigma\in \Ga_{j}\backslash \Ga}{\rm Im}(\sigma_{j}^{-1}\sigma(z))^{s}=\sum_{\sigma\in \Ga_{j} \backslash \Ga}\frac{y^{s}}{|cz+d|^{s}},
\]
where $(c,d)$ is the lower row of $\sigma_{j}^{-1}\sigma$. As a function of $s$, $E_{j}(z,s)$ admits a meromorphic continuation to $\C$, with a simple pole  at $s=1$.

Its study involves the corresponding Kloosterman zeta function at the pair of cusps $(j,k)$, which has been introduced by Selberg, even though the terminology seems due in print to Goldfeld and Sarnak ~\cite{MR689644}, see also the monographies of Kubota \cite[Chapter 2, p. 16]{MR0429749} and Iwaniec \cite[Chapter 9, p. 121]{MR1942691}. For $r\in\Z$, it is defined as
\[
\phi_{jk,r}(s):=\sum_{\sigma \in \Ga_j \backslash \Ga \slash \Ga_k} \frac{e^{\frac{2 \pi i r d}{c}}}{|c|^{2s}},
\]
where the entries $c$ and $d$ are such that  $\sigma_j^{-1} \sigma \sigma_k=\left(\begin{smallmatrix}
\star & \star\\
c & d\\
\end{smallmatrix}\right)$ and $c$ is required to be $>0$.
The series is absolutely convergent if ${\rm Re}(s)>1$, and can be extended to a meromorphic function on the complex plane, which is due to Selberg ~\cite{MR0182610}, see for instance ~\cite[Chapter 9, p. 122]{MR1942691}. 
It has a pole at $s=1$ when $r=0$, and extends holomorphically to the neighbourhood of $s=1$, when $r\ne0$.
The notation  $\phi_{jk, r}$ comes from Kubota's book \cite{MR0429749}, and has been retained by Goldstein and Scholl \cite{MR0318065,MR809492}, and has been modified by Iwaniec in \cite{MR1942691}.
The expansion of $E_{j}$ at $\sigma_{k}(\infty)$ is given by the formula :
\begin{equation}
\label{development}
E_{j}(\sigma_{k}(z),s)=\sum_{r\in {\bf Z}} a_{jk,r}(y,s)e^{2\pi i rx}.
\end{equation}
where, for $r\ge1$, 
\begin{equation}
a_{jk,r}(y,s)=\phi_{jk,r}(s)F_{r}(y,s),
\end{equation}
and
\begin{equation}
\label{constantterm}
a_{jk,0}(y,s)=\delta_{j,k}y^{s}+\phi_{jk,0}(s)y^{1-s}\pi^{1/2}\frac{\Gamma(s-1/2)}{\Gamma(s)}
\end{equation}
here $\delta_{j,k}$ is the Kronecker symbol, and $F_{r}(y,s)$ is built out of Bessel functions, and does not concern us, except for the value at $s=1$. 
Note that Scholl's formula for $a_{jk,0}(y,s)$ seems to contain a misprint: it involves $\pi^{s}$ instead of $\pi^{1/2}$, contra Kubota and Iwaniec.
The coefficients $\phi_{jk,0}(s)\pi^{1/2}{\Gamma(s-1/2)}/{\Gamma(s)}$ are the entries of the {\it scattering matrix} at the pair $(j,k)$. 
Such expressions have all a simple pole at $s=1$, whose residue does not depend on $j$ and $k$. More precisely, the following limit exists
\begin{equation}
\label{scattering development}
C_{j,k}=\lim_{s\to1}(\phi_{jk,0}(s)\pi^{1/2}\frac{\Gamma(s-1/2)}{\Gamma(s)}-\frac{2}{\pi|\Ga(2)/\Ga|(s-1)})=\lim_{s\to1}(\pi\phi_{jk,0}(s)-\frac{2}{\pi|\Ga(2)/\Ga|(s-1)}).
\end{equation}
It is the {\it scattering constant}, see \cite{MR2140212}, at the pair $(i,j)$. As a remark on the terminology, we note that the term seems to be of more recent use than the calculations that justifies its existence. It does not appear in \cite{MR1942691}, for instance.
The constants could be, and have been \cite{Posingies}, normalized differently, for instance par using $\zeta(s)$ instead of $1/(s-1)$. Since we will only be interested in differences between those constants, this doesn't matter to us.

Following Scholl~\cite[p. 15]{MR809492}, define 
\[
G_j(z,s)= 2 i \frac{\partial}{\partial z} E_j(z,s).
\]

Furthermore, write $G_j(z)=\lim_{s \rightarrow 1}G_j(z,s)$.
The Kloosterman zeta function is involved in the $q$-expansion of $G_D$ by the formula \cite{MR809492} (based on \cite{MR0429749})
\[
G_j(\sigma_k(z))J(\sigma_j,z)^{-2}=\delta_{j,k}-\frac{\pi C}{y}-4\pi^2\sum_{r=1}^\infty r\phi_{jk,r}(1)q^r
\]
where $z=x+iy$, $q=e^{2\pi iz}$, $J(\gamma,z)=cz+d$ when $\gamma=\left(\begin{smallmatrix}
\star & \star\\
c & d\\
\end{smallmatrix}\right)$, $C$ is a constant independent of $j$, and $\delta_{j,k}$ is the Kronecker symbol.
Note that if we write $G_{j|\sigma_j}(z)=G_j(\sigma_j(z))J(\sigma_j,z)^{-2}$, one has $G_{j|\sigma_j}=G_{\sigma_j^{-1}\Ga j}=G_{\sigma_j^{-1}\Ga\sigma_j \infty}$.

Write $D=\sum_{j\in\partial_{\Ga}}m_{j}[j]$. The limit $\lim_{s\to 1}E_j(z,s)$ makes no sense, because of the pole at $s=1$. But  $E_D(z)=\lim_{s\to 1}\sum_{j\in\partial_\Ga}m_jE_j(z,s)$ exists, as the residue at $s=1$ disappears. 
The modular form $G_D$ is given by 
\[
G_D(z)=\sum_{j=1}^m m_j G_j(z).
\]
It is holomorphic since $D$ is of degree $0$. Write
\[
G_D(z)=\sum_{r=0}^\infty a_re^{2\pi irz}.
\]
Scholl, using Waldschmidt's work in transcendental number theory ~\cite{MR0570648}, established the following criterion.

\begin{thm}[Scholl,~\cite{MR809492}]
\label{Scholl}
The divisor $D$ is torsion in $J_\Gamma$ if and only if, for every integer $r\ge1$, the coefficient 
\begin{equation}
\label{coefficient}
a_r=-4\pi^2r\sum_{j\in\partial_\Ga}m_j\phi_{j\Ga \infty,r}(1)
\end{equation}
is an algebraic number. 
\end{thm}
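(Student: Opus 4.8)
The plan is to transcribe Scholl's criterion into a statement about the canonical differential of the third kind $\omega_D$ attached to $D$, and then to invoke transcendence theory at the single point where it is genuinely needed. Recall that $\omega_D$ pulls back to $\tH$ as $2\pi i\,G_D(z)\,dz=G_D(z)\,\frac{dq}{q}$, and that $G_D(z)=\sum_{r\ge0}a_r q^r$ with $a_0=m_{\Gamma\infty}\in\Z$ (because $\deg D=0$) and $a_r=-4\pi^2 r\sum_j m_j\phi_{j\Gamma\infty,r}(1)$ for $r\ge1$. By Belyi's theorem, $X_\Gamma$, its cusps, and a suitable local parameter at $\Gamma\infty$ (a root of $e^{2\pi iz}$ prescribed by the width) are all defined over $\overline{\Q}$, and in particular $\frac{dq}{q}$ is an algebraic differential near $\Gamma\infty$. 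Fix, by Riemann--Roch over $\overline{\Q}$, a differential of the third kind $\eta_D$ over $\overline{\Q}$ with residue divisor $D$. Then $\omega_D-\eta_D$ is holomorphic, and it has an algebraic $q$-expansion if and only if every $a_r$ ($r\ge1$) is algebraic; by the $q$-expansion principle this is equivalent to $\omega_D$ itself being defined over $\overline{\Q}$. On the other hand, because the residues $m_j$ are integers, $\exp(N\!\int\omega_D)$ has trivial monodromy as soon as the periods of $\omega_D$ lie in $\frac{2\pi i}{N}\Z$, so, using the uniqueness of $\omega_D$, the divisor $D$ is torsion in $J_\Gamma$ precisely when the periods of $\omega_D$ lie in $2\pi i\Q$. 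The theorem therefore reduces to the single assertion: $\omega_D$ is defined over $\overline{\Q}$ if and only if its periods lie in $2\pi i\Q$.

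The ``if'' half is elementary, and I would dispose of it first. If $D$ is torsion, write $ND=\operatorname{div}(h)$; the linear system $|ND|$ is defined over $\overline{\Q}$, so $h$ may be taken in $\overline{\Q}(X_\Gamma)^\times$. Then $\frac1N\,\frac{dh}{h}$ is a differential of the third kind over $\overline{\Q}$ with residue divisor $D$ and periods in $\frac{2\pi i}{N}\Z$, hence by uniqueness it equals $\omega_D$; in particular $\omega_D$ is defined over $\overline{\Q}$, its periods lie in $2\pi i\Q$, and reading off the $q$-expansion of $\frac{1}{2\pi iN}\,h'/h$ yields $a_r\in\overline{\Q}$ for all $r\ge1$. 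No transcendence enters here: only the $\overline{\Q}$-structure furnished by Belyi's theorem and the uniqueness of $\omega_D$.

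The real content is the converse: assuming $\omega_D$ is defined over $\overline{\Q}$, one must conclude that $D$ is torsion. This is where I would feed in Waldschmidt's transcendence theory for commutative algebraic groups \cite{MR0570648}, applied to the generalized Jacobian $J_D$ of $X_\Gamma$ relative to $\operatorname{supp}(D)$ --- a semiabelian variety over $\overline{\Q}$ extending $J_\Gamma$ by a torus. The periods of $\omega_D$, together with $2\pi i$ and the holomorphic periods of $X_\Gamma$, are periods attached to $J_D$; the requirement that $\omega_D$ be the \emph{canonical} differential of the third kind forces all of its periods to be purely imaginary, which (after rewriting real parts via the real structure on $X_\Gamma$) imposes relations among these periods of an algebraic differential that Waldschmidt's theorem forbids unless the class of $D$ in $J_\Gamma$ is torsion. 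With that in hand, $D$ is torsion, and the criterion follows from the first paragraph.

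This transcendence step is the main obstacle; everything else --- the dictionary between $q$-expansions and differentials, the uniqueness of $\omega_D$, the reduction from third-kind to holomorphic differentials via $\eta_D$, and the integrality of the residues --- is formal. Two subsidiary technical points remain to be checked, both standard and both following from the Tate-curve description of the cusp above $\infty$: that the analytic parameter at $\Gamma\infty$ is an algebraic local uniformizer on $X_\Gamma/\overline{\Q}$, so that ``algebraic $q$-expansion'' is meaningful even when $\Gamma$ is non-congruence; and the $q$-expansion principle identifying differentials (holomorphic, or of the third kind) over $\overline{\Q}$ with those whose $q$-expansion has algebraic coefficients.
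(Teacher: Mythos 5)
The paper does not actually prove this statement: it is imported verbatim from Scholl \cite{MR809492}, with only the remark that the proof rests on Waldschmidt's transcendence theory. So your attempt has to be measured against Scholl's original argument, and its skeleton reproduces that argument faithfully and correctly. The equivalence ``$D$ torsion $\Leftrightarrow$ the periods of $\omega_D$ lie in $2\pi i\Q$'' (exponentiate $N\int\omega_D$ in one direction; take $\frac1N\,dh/h$ and invoke uniqueness of the canonical differential of the third kind in the other) is right, as is the equivalence ``all $a_r$ algebraic $\Leftrightarrow$ $\omega_D$ rational over $\overline{\Q}$'' via a $\overline{\Q}$-rational comparison differential $\eta_D$ and the $q$-expansion principle at $\Gamma\infty$ — legitimate once Belyi--Weil puts $X_\Gamma$, its cusps, and the parameter $q^{1/e}$ over $\overline{\Q}$, which is exactly why the statement survives for non-congruence groups. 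The implication ``torsion $\Rightarrow$ algebraic coefficients'' is complete as you give it.

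The gap is the converse, which you correctly flag as the whole content but do not prove. The sentence asserting that the purely imaginary periods ``impose relations \ldots that Waldschmidt's theorem forbids unless the class of $D$ in $J_\Gamma$ is torsion'' is circular as written: no precise transcendence statement is quoted, and the obvious candidates do not plug in directly. For instance, the assertion ``a differential of the third kind rational over $\overline{\Q}$ with non-torsion residue divisor has a period outside $\overline{\Q}+2\pi i\overline{\Q}$'' is not contradicted by the periods of $\omega_D$ being purely imaginary, since a purely imaginary number need not lie in $\overline{\Q}+2\pi i\overline{\Q}$. One must say exactly which theorem about periods of the $\G_m$-extension of $J_\Gamma$ classified by $[D]$ (or of the generalized Jacobian) is being used, and explain how the reality condition on the periods of $\omega_D$ — which is an analytic condition, not a $\overline{\Q}$-linear one — is converted into a $\overline{\Q}$-linear relation among periods of algebraic differentials on an algebraic group over $\overline{\Q}$; note also that $X_\Gamma$ carries no ``real structure'' in general, only a conjugate curve, so that step needs genuine care. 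Until that single step is made precise, the hard direction remains a placeholder rather than a proof.
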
 

Note that the differential form $G_D(z)\, dz$ is determined by finitely many (in terms of the genus of $X_\Ga$) coefficients $a_r$. Therefore the criterion can be reduced to verifying the algebricity of finitely many numbers.
The coefficients $a_{r}$ have been computed slightly more explicitly by K. Murty and Ramakrishnan, in terms of what they call generalized Ramanujan sums \cite{MR983619}.

In  \cite{MR2140212}, K\"uhn expressed the N\'eron-Tate pairing of two divisors of degree zero supported on $\partial_{\Ga}$ as a rational linear combination of (1) logarithms of integers and (2) products of $\pi$ by scattering constants, at least when $X_\Ga$ is defined over ${\Q}$. He derived as a consequence a formula for the scattering constants when all such divisors are torsion in $J_{\Ga}$.

\section{Mixed homology groups}
\label{MC}
We retain the notations of the introduction.
We follow \cite{MR1405312} and study the mixed homology groups $\HH_1(X_\Gamma-\partial_{\Ga}^{-}, \partial_{\Ga}^{+};\Z)$ and 
$\HH_1(X_\Gamma-\partial_{\Ga}^{+}, \partial_{\Ga}^{-};\Z)$. The intersection pairing provides a perfect bilinear pairing 
\[
\bullet:\HH_1(X_\Gamma-\partial_{\Ga}^{+},\partial_{\Ga}^{-};\Z) \times \HH_1(X_\Gamma-\partial_{\Ga}^{-},\partial_{\Ga}^{+};\Z) \rightarrow \Z.
\]
For $g \in \Gamma(2)$, set $\xi^{+}(\Ga g)=\{g0,g\infty\}^{+}$ and $\xi^{-}(\Ga g)=\{g1,g(-1)\}^{-}$.
By linearity, we extend the maps $\xi^{+}$ and $\xi^{-}$ to $\Z[\Gamma \backslash \Gamma(2)]\rightarrow \HH_1(X_\Gamma-\partial_{\Ga}^{-},\partial_{\Ga}^{+};\Z)$ and  $\Z[\Gamma \backslash \Gamma(2)]\rightarrow \HH_1(X_\Gamma-\partial_{\Ga}^{+},\partial_{\Ga}^{-};\Z)$ respectively.

\begin{thm}[~\cite{MR1405312}]
\label{int-iso}

The map $\xi^{+}$ and $\xi^{-}$ thus obtained are group isomorphisms. Furthermore, for $g$, $h\in \Ga(2)$ the intersection pairing $\xi^{+}( g)\bullet\xi^{-}(h)$ is equal to $1$ if $\Gamma g=\Gamma h$ and to $0$ otherwise.

\end{thm}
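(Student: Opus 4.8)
The plan is to reduce everything to a single combinatorial statement about the action of $P\Ga(2) = \Ga(2)/\{\pm1\}$ on the upper half-plane, using the well-known presentation of the relative homology of modular curves by Manin symbols. First I would recall that $Y(2) = \Ga(2)\backslash\tH$ has fundamental domain a hyperbolic triangle with vertices at the three cusps $0$, $1$, $\infty$ (and its mirror image), so that $X(2)-\{\text{cusps above }1\}$ and $X(2)-\{\text{cusps above }0,\infty\}$ are, respectively, homotopy equivalent to a wedge of circles in a way that makes the geodesic $\{0,\infty\}$ (resp. $\{1,-1\}$) a generator. Since $\Ga\subset\Ga(2)$ is of finite index with $-\mathrm{Id}\in\Ga$, the covering $X_\Ga \to X(2)$ has deck transformations indexed by $\Ga\backslash\Ga(2)$, and the relative homology groups in question are free on the $\Ga(2)$-translates of these geodesics, indexed by the cosets $\Ga g$. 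Concretely: $\HH_1(X_\Ga-\partial_\Ga^-,\partial_\Ga^+;\Z)$ is generated by the $\{g0,g\infty\}^+$ subject only to the relations forced by the boundary map landing in $\Z[\partial_\Ga^+]$, and similarly for the minus version. I would check that the number of such generators equals $[\Ga(2):\Ga] = \#(\Ga\backslash\Ga(2))$, and that there are no relations — because removing the cusps of the opposite parity makes the relevant cycles "open" at one end, killing the boundary-of-triangle relations. This is essentially the content of the cited \cite{MR1405312}, so the surjectivity and injectivity of $\xi^+$, $\xi^-$ follow once the rank count is confirmed.

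Next I would establish the duality/intersection-number statement. The pairing $\bullet$ is the relative intersection pairing between $\HH_1(X_\Ga-\partial_\Ga^+,\partial_\Ga^-;\Z)$ and $\HH_1(X_\Ga-\partial_\Ga^-,\partial_\Ga^+;\Z)$; by Lefschetz/Poincaré–Lefschetz duality on the surface $X_\Ga$ with the two disjoint sets of cusps removed in complementary fashion, this pairing is perfect. It remains to compute $\xi^+(g)\bullet\xi^-(h)$ for $g,h\in\Ga(2)$. The geodesic from $g0$ to $g\infty$ and the geodesic from $h1$ to $h(-1)$ are images under $\Ga(2)$ of the two "medians" of the fundamental triangle of $Y(2)$, which cross transversally in exactly one interior point of the triangle. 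Hence, downstairs in $Y(2)$, the two cycles meet in one point with intersection number $\pm1$; upstairs in $X_\Ga$, a lift of $\{g0,g\infty\}$ meets a lift of $\{h1,h(-1)\}$ if and only if the two lifts pass through the same $\Ga$-orbit of that crossing point, i.e. if and only if $\Ga g = \Ga h$, in which case they meet exactly once. Fixing orientations consistently (say via $\lambda$ and the standard orientation of $\tH$) pins the sign to $+1$. This gives $\xi^+(g)\bullet\xi^-(h) = \delta_{\Ga g,\Ga h}$, which simultaneously reproves that $\xi^+$ and $\xi^-$ are isomorphisms (the Gram matrix is the identity) and is the asserted formula.

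I expect the main obstacle to be the careful bookkeeping in the transversal-intersection count: one must verify that, for $g,h$ in the \emph{same} coset $\Ga g=\Ga h$ but with $g\neq h$ (differing by an element of $\Ga$), the corresponding lifted geodesics are genuinely the \emph{same} path in $X_\Ga$ (so one does not overcount), while for $g,h$ in different cosets the lifts are disjoint — this requires knowing that the geodesic $\{0,\infty\}$ (resp. $\{1,-1\}$) and all its $\Ga(2)$-translates, projected to $X_\Ga$, are embedded arcs meeting only as predicted, which in turn uses that these geodesics avoid the elliptic/cusp identifications of $\Ga(2)$ (there are none: $P\Ga(2)$ is free) and that $-\mathrm{Id}\in\Ga$ so that $\pm g$ give the same arc. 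The orientation/sign normalization is the other fussy point, but it is forced once and for all by a single explicit local computation at one crossing point. Everything else is a direct appeal to Poincaré–Lefschetz duality and to the structure theory of \cite{MR1405312}.
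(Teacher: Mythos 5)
The paper does not prove this statement: it is quoted from \cite{MR1405312}, so there is no internal proof to compare against. Your sketch is the standard argument and is essentially sound: the two groups are free of rank $[\Ga(2):\Ga]$ on the translates of the two ``diagonals'' of the ideal quadrilateral $(-1,0,1,\infty)$, the Poincar\'e--Lefschetz pairing is unimodular, and the Gram matrix of the two proposed bases is the identity, which yields both the isomorphism and the intersection formula at once.

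Two places deserve tightening. First, your freeness step is the vaguest (``open at one end, killing the boundary-of-triangle relations''; also $X(2)$ minus the cusp $1$ is contractible, not a wedge of circles). The clean statement is that $X_\Ga-\partial_\Ga^-$ deformation retracts onto the dessin graph $G$ with vertex set $\partial_\Ga^+$ and edge set the arcs $\{g0,g\infty\}$: the complement of $G$ in $X_\Ga$ is a union of open disks, one around each cusp above $1$ (faces of a dessin correspond to preimages of the third branch point), so removing $\partial_\Ga^-$ turns each face into an annulus retracting onto its boundary, and $\HH_1(X_\Ga-\partial_\Ga^-,\partial_\Ga^+;\Z)\simeq \HH_1(G,V(G);\Z)$ is free on the edges. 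Second, for the crossing count you should record why no \emph{extra} intersections occur: both the imaginary axis and the unit semicircle lie in the closure of a single fundamental domain of $\Ga(2)$ and meet only at $i$, and since $P\Ga(2)$ acts freely, a translate $\sigma(i\R_{>0})$ with $\sigma\in\Ga(2)$ meets the unit semicircle only for $\sigma=\pm\mathrm{Id}$ (an element of $\Ga(2)$ preserving $\{0,\infty\}$ must fix both, hence be diagonal, hence $\pm\mathrm{Id}$); together with $-\mathrm{Id}\in\Ga$ this gives exactly one transversal crossing when $\Ga g=\Ga h$ and none otherwise. The rank count you invoke does hold ($2g-2+|\partial_\Ga^+|+|\partial_\Ga^-|=[\Ga(2):\Ga]$ by Riemann--Hurwitz), so the duality argument closes. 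With these points made explicit your proof is complete and agrees with the argument of \cite{MR1405312}.
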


Consider the maps: $\HH_1(X_\Gamma-\partial_{\Ga}^{-}, \partial_{\Ga}^{+};\Z)\rightarrow \HH_1(X_\Gamma, \partial_{\Ga}^{+};\Z)$ and $\HH_1(X_\Gamma-\partial_{\Ga}^{+}, \partial_{\Ga}^{-};\Z)\rightarrow \HH_1(X_\Gamma, \partial_{\Ga}^{-};\Z)$. 
The kernel of those maps are the image of the maps $\lambda^{+}$: $\Z[\partial_{\Ga}^{-}]\simeq\HH_{0}(\partial_{\Ga}^{-},\Z)\rightarrow\HH_1(X_\Gamma-\partial_{\Ga}^{+}, \partial_{\Ga}^{-};\Z)$ and 
$\lambda^{-}$: $\Z[\partial_{\Ga}^{+}]\simeq\HH_{0}(\partial_{\Ga}^{+};\Z)\rightarrow\HH_1(X_\Gamma-\partial_{\Ga}^{-}, \partial_{\Ga}^{+};\Z)$ respectively. To be precise, $\lambda^{-}(x)$ (resp. $\lambda^{+}(x)$) is homologous to a counterclockwise loop around the cusp $x$. The width of the cusp is the ramification index of $\pi_{0}$ at that cusp. Recall that $A$ (resp. $B^{-1}$, resp. $BA^{-1}$) is the generator of the stabiliser of 
$\infty$ (resp. $0$, resp. $1$) in $P\Ga(2)$ such that, for $z$ in the upper half-plane, the image in $Y(2)$ of a path from $z$ to $Az$ (resp. $B^{-1}z$, resp. $BA^{-1}z$) is a counterclockwise loop around $\Gamma(2) \infty$ (resp. $\Ga(2)0$, resp. $\Ga(2)1$).

\begin{prop}
\label{boundary}
Let $g\in \Ga(2)$ and denote by $w_{\infty}$ the width of the cusp $j=\Gamma g\infty$. One has 
$$
\lambda^{-}(\Gamma g\infty)=-\sum_{k=0}^{w_{\infty}-1}\xi^{-}(gA^{k})=\sum_{h\in \Ga\backslash\Ga(2),h\infty=j}\xi^-(h).
$$
Denote by $w_{0}$ the width of the cusp $\Gamma g0$. One has 
$$
\lambda^{-}(\Gamma g0)=\sum_{k=0}^{w_{0}-1}\xi^{-}(gB^{k})=\sum_{h\in \Ga\backslash\Ga(2),h0=j}\xi^-(h).
$$
Denote by $w_{1}$ the width of the cusp $\Gamma g1$. One has 
$$
\lambda^{+}(\Gamma g1)=\sum_{k=0}^{w_{1}-1}\xi^{+}(g(AB^{-1})^{k}B)-\xi^{+}( g(AB^{-1})^{k})=\sum_{h\in \Ga\backslash\Ga(2),h(-1)=j}\xi^-(h)-\sum_{h\in \Ga\backslash\Ga(2),h1=j}\xi^-(h).
$$

\end{prop}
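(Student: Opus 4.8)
The plan is to recognize the right-hand side of each identity as (the homology class of) a small loop around the relevant cusp, using only the additivity $\{a,b\}^{\pm}+\{b,c\}^{\pm}=\{a,c\}^{\pm}$ and antisymmetry $\{a,b\}^{\pm}=-\{b,a\}^{\pm}$ of modular symbols, and then to invoke the description of $\lambda^{\pm}$ recalled above, namely that $\lambda^{-}(x)$ (resp.\ $\lambda^{+}(x)$) is the class of a counterclockwise loop around $x$. In each case the indicated combination of $\xi^{\pm}$'s telescopes to a single modular symbol of the shape $\{\delta(p),p\}^{\pm}$ in which $\delta\in\Gamma$ is a parabolic generator of the stabilizer of the cusp at hand; the image of such a symbol in $X_{\Gamma}$ is a loop winding once around that cusp, so it equals $\pm\lambda^{\pm}(\text{cusp})$, the sign being fixed by the convention relating $A$, $B^{-1}$ and $BA^{-1}$ to counterclockwise loops. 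By $\Gamma$-equivariance it suffices to treat the three model cusps $\Gamma g\infty$, $\Gamma g0$ and $\Gamma g1$.

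For $j=\Gamma g\infty$, put $w=w_{\infty}$; then $\delta:=gA^{w}g^{-1}$ lies in $\Gamma$ and generates $\mathrm{Stab}_{P\Gamma}(g\infty)$, which is precisely the definition of the width (here $-\mathrm{Id}\in\Gamma$ is used). Since $A$ fixes $\infty$ and $A(-1)=1$, we have $\xi^{-}(gA^{k})=\{gA^{k}(1),gA^{k}(-1)\}^{-}=\{gA^{k}(1),gA^{k-1}(1)\}^{-}$, so $\sum_{k=0}^{w-1}\xi^{-}(gA^{k})$ telescopes to $\{gA^{w-1}(1),gA^{-1}(1)\}^{-}=\{\delta(g(-1)),g(-1)\}^{-}$, and this class is $-\lambda^{-}(\Gamma g\infty)$. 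The cusp $\Gamma g0$ is handled the same way with $B$ in place of $A$, using $B(0)=0$ and $B(-1)=1$; the global sign comes out opposite because at $0$ the counterclockwise generator is $B^{-1}$ rather than $B$. A brief parabolic-stabilizer computation in $\Gamma(2)$ then shows $\{h\in\Gamma\backslash\Gamma(2):h\infty=j\}=\{\Gamma gA^{k}:0\le k<w_{\infty}\}$, and likewise for $\Gamma g0$ with $B$, which recasts the telescoped sums in the announced form.

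The cusp $j=\Gamma g1$ needs one more step, since the basis $\xi^{+}$ is expressed through $0$ and $\infty$ rather than through $1$ and $-1$. Set $C=AB^{-1}$, a generator of $\mathrm{Stab}_{P\Gamma(2)}(1)$, so that $\delta':=gC^{w_{1}}g^{-1}$ generates $\mathrm{Stab}_{P\Gamma}(g1)$. From $B(0)=0$, $B(\infty)=\tfrac12$ and $C^{-1}(\infty)=(BA^{-1})(\infty)=\tfrac12$ one finds $\xi^{+}(gC^{k}B)-\xi^{+}(gC^{k})=\{gC^{k}(\infty),gC^{k}(0)\}^{+}+\{gC^{k}(0),gC^{k}(\tfrac12)\}^{+}=\{gC^{k}(\infty),gC^{k-1}(\infty)\}^{+}$, and summing this over $k$ telescopes to $\{gC^{w_{1}-1}(\infty),gC^{-1}(\infty)\}^{+}=\{\delta'(g(\tfrac12)),g(\tfrac12)\}^{+}$, which is $\lambda^{+}(\Gamma g1)$. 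The stabilizer computations $\{h:h1=j\}=\{\Gamma gC^{k}:0\le k<w_{1}\}$ and $\{h:h(-1)=j\}=\{\Gamma gC^{k}B:0\le k<w_{1}\}$ (the second using $B(-1)=1$) then give the third displayed formula.

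The conceptual input is modest; the real work is the orientation bookkeeping --- the sign of a geodesic modular symbol, the orientation of the pairing $\bullet$, and clockwise versus counterclockwise --- that pins down the global signs and the identification of a telescoped symbol $\{\delta(p),p\}^{\pm}$ with $\pm\lambda^{\pm}$. The same identities can be reached, with the signs organized somewhat differently, by pairing both sides against the dual basis: by Theorem~\ref{int-iso} and the fact (recalled in the introduction) that $\lambda^{\pm}$ is dual to the boundary map $\HH_1(X_{\Gamma}-\partial_{\Ga}^{\pm},\partial_{\Ga}^{\mp})\to\HH_0(\partial_{\Ga}^{\mp})$, the identity for $\Gamma g\infty$ amounts to the statement that, for every $h$, the coefficient of $j$ in $\partial\xi^{+}(\Gamma h)$ equals (up to the same orientation sign) $\#\{k:\Gamma gA^{k}=\Gamma h\}$ --- which is the coset count of the previous paragraphs; the cases of $\Gamma g0$ and $\Gamma g1$ are analogous, with $\Gamma g1$ again forcing the difference $\xi^{+}(gC^{k}B)-\xi^{+}(gC^{k})$ because $1$ and $-1$ are not cusps of $\partial_{\Ga}^{+}$-type.
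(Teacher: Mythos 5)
Your argument is correct and is essentially the paper's own: both identify the relevant combination of $\xi^{\pm}$'s, via telescoping of modular symbols (you sum all at once where the paper peels off one term and iterates), with a single class $\{p,\delta p\}^{\pm}$ for $\delta$ a generator of the parabolic stabilizer, and then match it to the counterclockwise-loop convention defining $\lambda^{\pm}$. If anything you go slightly further than the paper by also justifying the coset reformulations $\{h:\,h\infty=j\}=\{\Gamma gA^{k}\}$ etc.\ (note that with that identification the first displayed formula's last member acquires a minus sign, $-\sum_{h\infty=j}\xi^{-}(h)$, which appears to be a sign typo in the statement rather than an error in your argument).
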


\begin{proof}
Note that $A\infty =\infty$, $B0=0$, $A(-1)=B(-1)=1$. The class of a loop around the cusp $\Gamma g\infty$ is given by 
\begin{eqnarray*}
\{g(-1), (gAg^{-1})^{w_{\infty}}g(-1)\}^{-}
&= &
\{g(-1),gA^{w_{\infty}}(-1)\}^{-}\\
&= &
\{g(-1),gA^{w_{\infty}-1}1\}^{-}\\
&= &
\{g(-1),gA^{w_{\infty}-1}(-1)\}^{-}+\{gA^{w_{\infty}-1}(-1),gA^{w_{\infty}-1}1\}^{-}\\
&= &
\{g(-1),gA^{w_{\infty}-1}(-1)\}^{-}+\{gA^{w_{\infty}-1}(-1),gA^{w_{\infty}-1}1\}^{-}\\
&= &
\{g(-1),gA^{w_{\infty}-1}(-1)\}^{-}-\xi^{-}(gA^{w_{\infty}-1}).\\
\end{eqnarray*}
Which gives the first formula by iteration. The second formula is proved by the same method (replace $A$ by $B^{-1}$).

The third formula is obtained similarly. The class of a loop around the cusp $\Ga g1$ is given by 
\begin{eqnarray*}
\{g\infty,(gBA^{-1}g^{-1})^{w_{1}}g\infty\}^{+}
&= &
\{g\infty,g(BA^{-1})^{w_{1}-1}B\infty\}^{+}\\
&= &
\{g\infty,g(BA^{-1})^{w_{1}-1}B0\}^{+}+\{g(BA^{-1})^{w_{1}-1}B0, g(BA^{-1})^{w_{1}-1}B\infty\}^{+}\\
&= & 
\{g\infty,g(BA^{-1})^{w_{1}-1}0\}^{+}+\xi^{+}(g(BA^{-1})^{w_{1}-1}B)\\
&= &
\{g\infty,g(BA^{-1})^{w_{1}-1}\infty\}^{+}+\{g(BA^{-1})^{w_{1}-1}\infty,g(BA^{-1})^{w_{1}-1}0\}^{+}\\
&&+\xi^{+}(g(AB^{-1})^{w_{1}-1}B)\\
&= &
\{g\infty,g(BA^{-1})^{w_{1}-1}\infty\}^{+}-\xi^{+}(g(BA^{-1})^{w_{1}-1})+\xi^{+}(g(AB^{-1})^{w_{1}-1}B),\\
\end{eqnarray*}

which leads to the third formula, by iterating again.

\end{proof}

The boundary maps $\HH_1(X_\Gamma-\partial_{\Ga}^{-},\partial_{\Ga}^{+};\Z) \rightarrow \Z[\partial_{\Ga}^{+}]$ (resp. $\HH_1(X_\Gamma-\partial_{\Ga}^{+},\partial_{\Ga}^{-};\Z) \rightarrow \Z[\partial_{\Ga}^{-}]$)  associates to $\xi^{-}(g)$ (resp. $\xi^{+}(g)$) the divisor $(\Ga g1)-(\Ga g(-1))$ (resp. $(\Ga g\infty)-(\Ga g0)$). They are dual to the maps $\lambda^{-}$ and $\lambda^{+}$ respectively.

There is a notion of Eisenstein class in $\HH_1(X_\Gamma-\partial_{\Ga}^{-},\partial_{\Ga}^{+};\R)$ and $\HH_1(X_\Gamma-\partial_{\Ga}^{+},\partial_{\Ga}^{-};\R)$. 
Let $D^+$ (resp. $D^-$) be a divisor of degree $0$ supported on $\partial_\Ga^+$ (resp. $\partial_\Ga^-$). 
The corresponding Eisenstein class belongs to $\HH_1(X_\Gamma-\partial_{\Ga}^{-},\partial_{\Ga}^{+};\R)$ (resp. $\HH_1(X_\Gamma-\partial_{\Ga}^{+},\partial_{\Ga}^{-};\R)$).

It is the unique element $c$ of boundary $D^+$ (resp. $D^-$) such that $\Re(\int_c\omega)=0$ for all $\omega$ differential form of the third kind whose poles are supported on $\partial_\Ga^-$ (resp. $\partial_\Ga^+$) and whose residues are real.

\section{Inner product formula}
\label{innerprod}
Let $M_{2}(\Gamma)$ be the space of holomorphic modular forms of weight $2$ for $\Ga$. 
The following formula is akin to the formula \cite[p. 21, Theorem 2]{MR2641193}. 
\begin{theorem}
\label{pinnerprodimp}
Let $f_{+}$ and $f_{-}$ be elements $M_{2}(\Gamma)$ such that the poles of $f_{+}(z)dz$ and $f_{-}(z)dz$ belong to $\partial_{\Ga}^{+}$ and $\partial_{\Ga}^{-}$ respectively. 
Thus the Petersson inner product $<f_{+}, f_{-}>$ is well defined. We have the formula:

\[
<f_{+}, f_{-}>=\frac{1}{12i[\Ga(2): \Ga]} \sum_{g \in \Ga \backslash \Ga(2)} \int_{g(1)}^{g (-1)} f_{+}(z)\,dz \int_{g 0}^{g \infty}\overline{f_{-}(z) dz}. 
\]
\end{theorem}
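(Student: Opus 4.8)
The plan is to express the Petersson inner product as an integral over a fundamental domain for $\Ga$, decompose that domain into translates by coset representatives of $\Ga\backslash\Ga(2)$ of a fundamental domain for $\Ga(2)$, and then recognize each piece via Stokes' theorem as a product of the two modular symbol integrals appearing on the right-hand side. First I would recall that, since $f_+(z)\,dz$ and $f_-(z)\,dz$ are differentials of the third kind with disjoint polar loci ($\partial_\Ga^+$ and $\partial_\Ga^-$ respectively), the product $f_+(z)\overline{f_-(z)}\,y^{0}$ (more precisely $f_+(z)\overline{f_-(z)}\,dx\,dy$) is integrable over $X_\Ga$: near a cusp in $\partial_\Ga^+$ the form $f_-$ is holomorphic and vanishing, which tames the simple pole of $f_+$, and symmetrically near $\partial_\Ga^-$. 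Thus $\langle f_+,f_-\rangle=\int_{X_\Ga}f_+(z)\overline{f_-(z)}\,dx\,dy$ converges, and I would fix a fundamental domain $\mathcal{F}_\Ga$ for $\Ga$ acting on $\tH$ so that $\langle f_+,f_-\rangle=\int_{\mathcal{F}_\Ga}f_+\overline{f_-}\,dx\,dy$.

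The key computational device is a primitive: let $F_+(z)=\int^{z}f_+(w)\,dw$ be a (multivalued) antiderivative of $f_+(z)\,dz$ on $\tH$. Then $d\big(F_+(z)\,\overline{f_-(z)\,dz}\big)=f_+(z)\,\overline{f_-(z)}\,dz\wedge d\bar z=-2i\,f_+(z)\overline{f_-(z)}\,dx\wedge dy$, using that $\overline{f_-(z)\,dz}$ is anti-holomorphic hence $d$-closed. Applying Stokes' theorem on $\mathcal{F}_\Ga$ converts $\langle f_+,f_-\rangle$ into a sum of boundary integrals $\tfrac{i}{2}\int_{\partial\mathcal{F}_\Ga}F_+\,\overline{f_-\,dz}$. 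The next step is to choose $\mathcal{F}_\Ga$ adapted to the combinatorics of section~\ref{MC}: take $\mathcal{F}_\Ga=\bigcup_{g}g\mathcal{F}_{\Ga(2)}$ as $g$ runs over representatives of $\Ga\backslash\Ga(2)$, with $\mathcal{F}_{\Ga(2)}$ the standard ideal hyperbolic domain for $\Ga(2)$, whose boundary consists of geodesic sides identified in pairs by the generators $A$, $B$ of $P\Ga(2)$. The boundary $\partial\mathcal{F}_\Ga$ then breaks into sides that are either internal (shared between two translates $g\mathcal{F}_{\Ga(2)}$ and $g'\mathcal{F}_{\Ga(2)}$ with $g,g'$ not $\Ga$-equivalent, contributing from both sides) or identified by an element of $\Ga$ (genuine boundary of $\mathcal{F}_\Ga$). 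I would organize the bookkeeping so that the contributions regroup, side by side, into integrals along the images of the geodesics $\{g0,g\infty\}$ and $\{g1,g(-1)\}$.

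The heart of the argument is then the same cocycle manipulation used in the classical Haberland-type / Merel formulas: using the $\Ga$-invariance of $f_-(z)\,dz$ and the transformation behavior $F_+(\gamma z)=F_+(z)+\int_{z_0}^{\gamma z_0}f_+$ (a period, independent of $z$) for $\gamma\in\Ga$, each pair of $\Ga$-identified sides contributes the product of a period of $f_+$ with an integral of $\overline{f_-\,dz}$ along a geodesic, and conversely; summing over the $g$ and using that $f_+(z)\,dz$ and $\overline{f_-(z)\,dz}$ have poles in complementary cusp sets (so the relevant integrals along sides touching $\partial_\Ga^+$ or $\partial_\Ga^-$ behave correctly, the pole of $f_+$ being absorbed because $F_+$ has at worst a logarithmic singularity there while $f_-$ vanishes), one collapses the double sum to $\sum_{g\in\Ga\backslash\Ga(2)}\big(\int_{g1}^{g(-1)}f_+\big)\big(\int_{g0}^{g\infty}\overline{f_-\,dz}\big)$. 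The constant $\tfrac{1}{12i[\Ga(2):\Ga]}$ emerges from two sources: the factor $\tfrac{i}{2}$ (equivalently $\tfrac{1}{2i}$ after the sign) from the Stokes conversion $dz\wedge d\bar z=-2i\,dx\wedge dy$, and a factor $\tfrac{1}{6[\Ga(2):\Ga]}$ coming from the structure of $\mathcal{F}_{\Ga(2)}$ — equivalently, matching against the known intersection-pairing normalization of Theorem~\ref{int-iso}, where $\xi^+(g)\bullet\xi^-(h)=\delta_{\Ga g,\Ga h}$ and the Poincaré-duality constant for $X(2)$ (index, area, the $6$ coming from $[\SL_2(\Z):\Ga(2)]=6$) enters. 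Concretely I expect to pin the constant down by testing the identity on a convenient pair, or by invoking directly the reciprocity between $\{,\}^+$ and $\{,\}^-$ and the period pairing, rather than recomputing the hyperbolic area.

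The main obstacle, and the step requiring the most care, is the convergence/boundary analysis at the cusps: the integrand $f_+\overline{f_-}$ is integrable but not compactly supported, so Stokes' theorem on the non-compact $\mathcal{F}_\Ga$ must be justified by truncating horoballs around the cusps and controlling the error as the truncation recedes — one must check that the horocyclic boundary integrals of $F_+\,\overline{f_-\,dz}$ tend to zero, which relies precisely on the complementarity of the polar loci ($F_+$ grows only logarithmically, $f_-$ decays exponentially, near $\partial_\Ga^+$, and symmetrically), together with the fact that the "constant term" subtleties in the $q$-expansions of $f_\pm$ (the $\pi C/y$ term in the Scholl expansion recalled in section~\ref{Riemann}) do not spoil the cancellation because $D^\pm$ have degree zero. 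Getting this limiting argument clean, and simultaneously tracking signs and the combinatorial regrouping of sides so that the $\Ga(2)$-generators $A$ and $B$ produce exactly $\xi^-$ and $\xi^+$ with the right orientation, is where the real work lies; everything else is the standard Stokes-plus-cocycle bookkeeping.
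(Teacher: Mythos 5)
Your overall strategy (write $\langle f_+,f_-\rangle$ as $\frac{1}{12i[\Ga(2):\Ga]}\int_{X_\Ga}\omega_{f_+}\wedge\overline{\omega_{f_-}}$, tile by $\Ga\backslash\Ga(2)$-translates of the ideal quadrangle with vertices $0,1,-1,\infty$, introduce a primitive $F_+$ of $f_+(z)\,dz$, and apply Stokes) is the same as the paper's up to the point where Stokes is invoked, and your identification of the constant and of the integrability of $f_+\overline{f_-}$ is fine. But the route you take from there — boundary of a fundamental domain for $\Ga$, side-pairings, and a Haberland-type cocycle manipulation — is not what the paper does, and it is also where your sketch has a concrete unresolved gap: the sides of the ideal quadrangle $gD_0$ are the geodesics $(g\infty,g(-1))$, $(g(-1),g0)$, $(g0,g1)$, $(g1,g\infty)$, whereas the paths $\{g0,g\infty\}^+$ and $\{g1,g(-1)\}^-$ appearing in the statement are its \emph{diagonals}. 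Your plan to have the side contributions "regroup into integrals along $\{g0,g\infty\}$ and $\{g1,g(-1)\}$" therefore requires an additional, nontrivial conversion (further cutting each face along its diagonals, or a homology argument re-expressing sides in terms of diagonals), which you assert but do not supply; this is precisely the combinatorial heart of the proof and cannot be waved through as "standard bookkeeping."

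The paper sidesteps side-pairings and cocycles entirely via a general lemma (Proposition \ref{quadrangulation}): for a quadrangulated compact Riemann surface with bicolored vertex set $E^+\sqcup E^-$ and, in each face $q$, transversal paths $\delta_q^+$, $\delta_q^-$ joining the two $E^+$- (resp.\ $E^-$-) vertices with $\delta_q^+\bullet\delta_q^-=1$, one has $\int_X\omega_+\wedge\bar\omega_-=\sum_q\int_{\delta_q^-}\omega_+\int_{\delta_q^+}\bar\omega_-$. The proof decomposes $X$ into the stars $\cup_{q\in Q_v}\tau(v,q)$ of the vertices $v\in E^-$, obtained by cutting each face along $\delta_q^+$; each star is simply connected, so the primitive $F_v(z)=\int_v^z\omega_+$ (based at the cusp $v$, where $\omega_+$ is regular) is single-valued there, Stokes gives boundary integrals over the $\delta_q^+$ only, and regrouping by faces the two base points $v_1,v_2$ of a face contribute the constant $F_{v_1}-F_{v_2}=\int_{\delta_q^-}\omega_+$. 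Applied with $E^\pm=\partial_\Ga^\pm$, faces $\Ga gD_0$, and diagonals as the $\delta^\pm$, this yields the formula directly; the convergence issues you worry about are localized at the vertices of the quadrangulation (log growth of $F_v$ against regularity of $\omega_-$ at $E^+$), with no horoball truncation of a non-compact fundamental domain needed. If you want to salvage your version, the cleanest fix is to adopt this star decomposition (or at least to cut each $gD_0$ along both diagonals) rather than working with the boundary of $\mathcal{F}_\Ga$.
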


\begin{proof}
We prove first a more abstract statement which seems a basic statement in the theory of Riemann surfaces. However we did not know any reference for this.

\begin{prop}
\label{quadrangulation}
Let $X$  be a connected, compact, non-empty Riemann surface endowed with a quadrangulation. In such a situation, the set of vertices of the quadrangulation is partitioned in two subsets $E^{+}$ and $E^{-}$ such that no two  vertices of $E^{+}$ (resp. $E^{-}$) are connected by an edge. Let $Q$ be the set of faces. For every $q\in Q$, we fix an oriented path $\delta_{q}^{+}$ (resp.  $\delta_{q}^{-}$) whose extremities are the vertices of $q$ in $E^{+}$ (resp. in $E^{-}$). We impose furthermore that the intersection product is given by $\delta_q^{+}\bullet \delta_q^{-}=1$ (here, our convention is that $\delta_q^+$ and $\delta_q^-$ cross counterclockwise).
Let $\omega_{+}$ and $\omega_{-}$ be meromorphic differential forms on $X$ whose poles are at most simple and reside in $E^{+}$ and $E^{-}$ respectively.
Then one has 
$$
\int_{X}\omega_{+}\wedge\bar\omega_{-}=\sum_{q\in Q}\int_{\delta_{q}^{-}}\omega_{+}\int_{\delta_{q}^+}\bar\omega_{-},
$$
where $q$ runs through the faces of the quadrangulation.

\end{prop}
\begin{proof}
Note that the formula makes sense. 
Set $V=E^-$. 
Let $Q$ be the set of faces of the quadrangulation.
We can suppose that for every $q\in Q$, the paths $\delta_{q}^{+}$ and $\delta_{q}^{-}$ are supported on a set that divides $q$ in two connected components. 
Denote by $V_{q}$ the subset of vertices in $V$ that are adjacent to $q$.
For $v\in V$, denote by $\tau(q,v)$ the connected component containing $v$ of $q$ deprived of the support of $\delta_{q}^{+}$.
Denote by $Q_{v}$ the set of faces of the quadrangulation which are adjacent to $v$.
One has a disjoint union (up to a negligible subset)
$$
X=\cup_{v\in V}\cup_{q\in Q_{v}}\tau(q,v).
$$
We turn now to the computation. One has
$$
\int_{X}\omega_{+}\wedge\bar\omega_{-}=\sum_{v\in V}\sum_{q\in Q_{v}}\int_{\tau(v,q)}\omega_{+}\wedge\bar\omega_{-}=\sum_{v\in V}\int_{\cup_{q\in Q_v}\tau(v,q)}\omega_{+}\wedge\bar\omega_{-}.
$$
The boundary of $\cup_{q\in Q_{v}}\tau(v,q)$ is $\sum_{q\in Q_{v}}\alpha(q,v)\delta_{q}^{+}$, where $\alpha(q,v)=1$ (resp. $-1$) if $\delta_q^+$ goes left (resp. right) from $v$'s point of view.

For $v\in V$, and $z\in \cup_{q\in Q_v}\tau(v,q)$ denote by $F_{v}(z)=\int_{v}^{z}\omega_{+}$, which is well defined since $ \cup_{q\in Q_v}\tau(v,q)$ is simply connected. One has $dF_{v}\omega_{-}=\omega_{+}\wedge\omega_{-}$.
By Stokes' theorem, one has
$$
\int_{X}\omega_{+}\wedge\bar\omega_{-}=\sum_{v\in V}\sum_{q\in Q_{v}}\alpha(q,v)\int_{\delta_{q}^{+}}F_{v}(z)\omega_{-}.
$$
Note that $\sum_{q\in Q_{v}}\alpha(q,v)F_v=\int_{\delta_{q}^{-}}\omega_{+}$, because of the condition $\delta_q^{+}\bullet \delta_q^{-}=1$. 
Thus we get the desired formula.
\end{proof}
The theorem follows. Indeed, the translates by $\Ga$ of the fundamental domain $D_0$ of $X(2)$ given by the hyperbolic quadrangle with vertices $0$, $1$, $-1$ and $\infty$ provide a quadrangulation of $X_\Gamma$.
The faces are given by the $\Gamma gD_0$, for $g\in \Gamma\backslash\Gamma(2)$. The $\delta_{\Gamma gD_0}^{+}$  and $\delta_{\Gamma gD_0}^{-}$ can be chosen as the image of the geodesic paths from $g0$ to $g\infty$ and $g1$ to $g(-1)$ respectively.
The Petersson inner product \cite[p. 182]{MR2112196}:
$$
 <f_{+}, f_->
= \frac{1}{ [\SL_2(\Z):\Ga]} \int_{D_{\Ga}} f_{+}(z) \overline{f_-}(z) dx dy
= \frac{1}{12 i[\Ga(2):\Ga]} \int_{X_{\Ga}} \omega_{f_{+}} \wedge \overline{ \omega_{f_{-}}},
$$
where $D_\Ga$ is a fundamental domain for $\Ga$ in the upper half-plane.
The last term can be expressed using proposition \ref{quadrangulation}.
\end{proof}

Let $D^+$ and $D^-$ be divisors of degree $0$ supported on $\partial_\Ga^+$ and $\partial_\Ga^-$ respectively. 

\begin{prop}
\label{orthogonality}
With the notations of the preceeding theorem, suppose furthermore that $f_{+}(z)\, dz$ (resp. $f_{-}(z)\,dz$) is the pullback of a canonical differential form of the third kind. One has
\[
<f_{+}, f_{-}>=0.
\]
\end{prop}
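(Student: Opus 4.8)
The plan is to combine Theorem~\ref{pinnerprodimp} with the defining orthogonality property of canonical differentials of the third kind recalled in Section~\ref{Riemann}. Write $\omega_\pm$ for the pullback of $f_\pm(z)\,dz$; by hypothesis $\omega_+$ (resp. $\omega_-$) is the pullback of the canonical differential of the third kind $\omega_{D^+}$ (resp. $\omega_{D^-}$) attached to a divisor $D^+$ on $\partial_\Ga^+$ (resp. $D^-$ on $\partial_\Ga^-$). Recall that $\omega_{D^+}$ is characterized by having residue divisor $D^+$ and by $\Re\bigl(\int_c\omega_{D^+}\bigr)=0$ for every $c\in\HH_1(X_\Ga,\partial_\Ga;\Z)$.

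The key computation is as follows. Start from the formula of Theorem~\ref{pinnerprodimp}:
\[
<f_+,f_->=\frac{1}{12i[\Ga(2):\Ga]}\sum_{g\in\Ga\backslash\Ga(2)}\int_{g(1)}^{g(-1)}f_+(z)\,dz\;\overline{\int_{g0}^{g\infty}f_-(z)\,dz}.
\]
The path from $g(1)$ to $g(-1)$ is a representative of $\xi^-(g)$, which lies in $\HH_1(X_\Ga-\partial_\Ga^+,\partial_\Ga^-;\Z)$ and in particular defines a relative homology class in $\HH_1(X_\Ga,\partial_\Ga;\Z)$; since $f_+(z)\,dz$ has no poles on this path (its poles are in $\partial_\Ga^+$, while the endpoints lie above the cusp $1$, i.e. in $\partial_\Ga^-$), the integral $\int_{g(1)}^{g(-1)}f_+(z)\,dz=\int_{\xi^-(g)}\omega_+$ makes sense, and by the canonical-differential property its real part vanishes, so this integral is purely imaginary. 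Symmetrically, $\int_{g0}^{g\infty}f_-(z)\,dz=\int_{\xi^+(g)}\omega_-$ is purely imaginary, hence so is its complex conjugate. Therefore each product $\int_{\xi^-(g)}\omega_+\cdot\overline{\int_{\xi^+(g)}\omega_-}$ is a product of two purely imaginary numbers, hence real, and after dividing by $i$ it becomes purely imaginary; summing, $<f_+,f_->$ is purely imaginary. On the other hand, the Petersson inner product $<f_+,f_->$ is a Hermitian pairing value, but more simply: repeating the argument with the roles reversed (or invoking that $<f_+,f_->$ equals $\overline{<f_-,f_+>}$, and that the same formula computes $<f_-,f_+>$ as a purely imaginary number by the identical reasoning) forces $<f_+,f_->$ to be real as well. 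A number that is both real and purely imaginary is $0$.

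A cleaner way to phrase the second half, which I would prefer in the writeup, is to observe directly that $\overline{<f_+,f_->}=<f_-,f_+>$ and that Theorem~\ref{pinnerprodimp} applied to the pair $(f_-,f_+)$ (whose poles sit in $\partial_\Ga^-$ and $\partial_\Ga^+$, so the theorem applies after swapping the roles of the $+$ and $-$ decorations) again yields a purely imaginary value; combined with the fact that $<f_+,f_->$ is itself purely imaginary, one gets $<f_+,f_->=\overline{<f_+,f_->}$, hence it is real, hence zero. Alternatively one can avoid the conjugation symmetry entirely by noting $<f_+,f_->=\tfrac{1}{12i[\Ga(2):\Ga]}\int_{X_\Ga}\omega_+\wedge\overline{\omega_-}$ and that $\int_{X_\Ga}\omega_+\wedge\overline{\omega_-}$ is manifestly purely imaginary when both integrals of $\omega_\pm$ over a basis of relative cycles are purely imaginary (using Theorem~\ref{int-iso} to express this wedge pairing through the perfect intersection pairing on mixed homology), while conjugating shows it equals $-\int_{X_\Ga}\omega_-\wedge\overline{\omega_+}$, whose reality type is opposite; pushing this through gives the vanishing.

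The only genuine point requiring care—and the main obstacle—is the justification that the integral $\int_{g(1)}^{g(-1)}f_+(z)\,dz$ over the geodesic path equals $\int_c\omega_+$ for a genuine class $c\in\HH_1(X_\Ga,\partial_\Ga;\Z)$ to which the canonical-differential vanishing property applies: one must check that the relative class $\xi^-(g)$ in the punctured surface indeed maps to such a class and that no pole of $\omega_+$ is met, so that integration along the geodesic is well defined and homotopy-invariant relative to $\partial_\Ga$. This is exactly the compatibility already built into Section~\ref{MC} (the image of $\xi^-(g)$ in $\HH_1(X_\Ga,\partial_\Ga^-;\Z)\subset\HH_1(X_\Ga,\partial_\Ga;\Z)$), so it reduces to a citation rather than new work; everything else is the two-line reality-versus-imaginary parity argument above.
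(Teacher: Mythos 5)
Your proof breaks down at its central step: the claim that $\int_{g(1)}^{g(-1)}f_+(z)\,dz$ is purely imaginary. The canonical differential of the third kind $\omega_{D^+}$ is only normalized so that the real parts of its periods over \emph{closed} cycles in $X_\Ga-\partial_\Ga^+$ vanish (equivalently, $\Re f_{D^+}$ is single-valued and harmonic); it is not, and cannot be, normalized so that $\Re\bigl(\int_c\omega_{D^+}\bigr)=0$ for all \emph{relative} cycles $c\in\HH_1(X_\Ga,\partial_\Ga;\Z)$. Indeed the holomorphic correction $\omega'$ in the construction ranges over a real vector space of dimension twice the genus, whereas the relative periods impose $2g+|\partial_\Ga|-1$ real conditions, so the stronger normalization you need is overdetermined and generically unachievable. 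Concretely, $\Re\int_{\xi^-(g)}\omega_{D^+}$ equals the difference of the values of the single-valued harmonic function $\Re f_{D^+}$ at the cusps $\Ga g(-1)$ and $\Ga g(1)$; these differences are precisely the quantities the paper evaluates in Proposition~\ref{scatteringperiods} in terms of scattering constants and assembles into the class $\sI_{D^+}$, and their possible nonvanishing is the very obstruction around which Corollary~\ref{torsion} and Section~\ref{scattering} are organized. So the point you dismiss at the end as ``a citation rather than new work'' is in fact where the argument fails: the factors in the sum of Theorem~\ref{pinnerprodimp} are not purely imaginary in general.

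There is a second, independent problem: even granting purely imaginary factors, you would only conclude $<f_+,f_->\in i\R$, and the upgrade to vanishing is circular. From $\overline{<f_+,f_->}=<f_-,f_+>$ together with ``$<f_-,f_+>$ is purely imaginary'' you can only deduce $<f_-,f_+>=-<f_+,f_->$, which is automatic for a purely imaginary number and gives nothing new; likewise $\overline{\int\omega_+\wedge\bar\omega_-}=-\int\omega_-\wedge\bar\omega_+$ has the \emph{same}, not the opposite, reality type under your hypotheses. A nonzero purely imaginary number satisfies all your constraints. For comparison, the paper's own proof does not pass through Theorem~\ref{pinnerprodimp} at all: it writes $\omega_+=\partial h$ with $h$ harmonic and argues that $\int_{X_\Ga}\partial h\wedge\bar\omega_-=0$ by a Stokes-type argument. (That proof is itself extremely terse --- Stokes' theorem produces boundary contributions proportional to $\sum_p \mathrm{Res}_p(\omega_-)\,h(p)$ at the poles $p\in\partial_\Ga^-$ of $\omega_-$ which need to be accounted for --- but in any case it is a genuinely different route from yours.)
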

\begin{proof}
Call $\omega_{+}$ the differential form whose pullback is $f_{+}(z)\, dz$. The proposition is true whenever $\omega_+$ is holomorphic on $X_\Ga$. Thus we can suppose that $\omega_+$ is a canonical differential form of the third kind.
It can be written as $\partial h$, where $h$ is a harmonic function on $X_\Ga$. Hence 
$\int_{X_\Ga}\partial h\wedge{\bar\omega}=0$, for all meromorphic differential forms $\omega$ whose poles are concentrated in $\partial_\Ga^-$.
\end{proof}

\section{Periods of Eisenstein series}
\label{EC}
We use the setup of section \ref{Riemann}.
Let $m=2\pi|\Ga/\Ga(2)|$ be the hyperbolic volume of the modular curve $X_\Ga$ (it will only play a transitory role below). Set $q=e^{2i\pi z}$. Consider the function on the upper half-plane:
\[
\log(\eta_{\Ga,j}(z))=-\frac{mz}{4 i}-\pi m[ \sum_{r=1}^{\infty} \phi_{jj r}(1) q^r].
\]
It is connected to the Eisenstein series $E_{j}(z,s)$ via a version of the Kronecker limit theorem established by Goldstein \cite[Theorem 3-1; 3-3]{MR0318065}
\begin{equation}
\label{klf}
\lim_{s \rightarrow 1} [ \frac{1}{2 \pi} E_j(z,s) -\frac{1}{2 \pi m(s-1)}]=\frac{\beta_j}{2}- \frac{1}{\pi m} \log(2) -\frac{1}{\pi m} \log|\sqrt{y} \eta_{\Ga,j}(z)^2|,
\end{equation}
where $z=x+iy$ and $\beta_{j}$ is a complex number independent of $z$ (the scattering constants $C_{j,j}$ up to a scalar multiple independent of $j$).
Goldstein has explored the properties of the function $\eta_{\Ga,j}$, which is a modular form of weight $1/2$ for the group $\sigma_j\Ga\sigma_j^{-1}$, and whose periods he sees as analogues of Dedekind sums \cite{MR0318065}.

The multivalued function on the upper half-plane associated to the divisor $D$ is then given by
\[
\log(\eta_{\Ga,D}(z))=\sum_{j}m_{j}\log(\eta_{\Ga,j})(z).
\]
Note that 
\[
E_D(z):=\frac{2}{m} \log|\eta_{\Ga,D}(z)^2|.
\]
\begin{prop}
\label{exact}
 We have an equality of differential forms
\[
2\pi iG_D(z)\,dz=\frac{4\pi}{m}\,d\eta_{\Ga,D}.
\]
\end{prop}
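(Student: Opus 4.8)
The plan is to unwind both sides of the claimed identity into $q$-expansions and compare. First I would recall that, by the discussion preceding Theorem \ref{Scholl}, the canonical differential of the third kind attached to $D$ pulls back to $2\pi i G_D(z)\,dz$, and that this form is exact on the upper half-plane with primitive given (up to the normalizing constants) by $E_D$ plus its harmonic conjugate. More precisely, the honest statement to chase is that $2\pi i G_D(z)\,dz = \frac{4\pi}{m}\, d\log(\eta_{\Ga,D}(z))$ as multivalued forms on $\tH$, since $\eta_{\Ga,D}$ is only defined up to the periods coming from $\sum_j m_j$ times the periods of $\log\eta_{\Ga,j}$. (I would make a remark clarifying that ``$d\eta_{\Ga,D}$'' in the statement means $d\log(\eta_{\Ga,D})$, i.e. $\eta_{\Ga,D}'/\eta_{\Ga,D}\,dz$, which is the single-valued object.)

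Next I would compute $\frac{d}{dz}\log(\eta_{\Ga,j}(z))$ termwise from the defining series
\[
\log(\eta_{\Ga,j}(z)) = -\frac{mz}{4i} - \pi m\sum_{r=1}^\infty \phi_{jjr}(1)\, q^r,
\]
with $q = e^{2\pi i z}$, so that $dq/dz = 2\pi i q$. This gives
\[
\frac{d}{dz}\log(\eta_{\Ga,j}(z)) = -\frac{m}{4i} - 2\pi^2 i m\sum_{r=1}^\infty r\,\phi_{jjr}(1)\, q^r.
\]
Summing against the $m_j$ and multiplying by $\frac{4\pi}{m}$ yields
\[
\frac{4\pi}{m}\,\frac{d}{dz}\log(\eta_{\Ga,D}(z)) = -\frac{\pi}{i}\sum_j m_j - 8\pi^3 i \sum_{r=1}^\infty r\Big(\sum_j m_j\phi_{jjr}(1)\Big) q^r.
\]
Since $D$ has degree $0$, the constant term $\sum_j m_j$ vanishes. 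On the other side, $2\pi i G_D(z) = 2\pi i \sum_j m_j G_j(z)$, and from the $q$-expansion formula quoted from Scholl, $G_{j|\sigma_j}(z) = \delta_{j,\dots} - \pi C/y - 4\pi^2\sum_{r\ge1} r\phi_{jjr}(1)q^r$; here, because we are expanding $G_j$ itself at the cusp $j$ (not a general cusp $k$), the $-\pi C/y$ and Kronecker-delta contributions assemble, after weighting by $m_j$ and using $\deg D = 0$, into something whose only surviving part is $-4\pi^2\sum_r r(\sum_j m_j\phi_{jjr}(1))q^r$. Hence $2\pi i G_D(z)\,dz = 2\pi i\cdot(-4\pi^2)\sum_r r(\sum_j m_j\phi_{jjr}(1))q^r\,dz = -8\pi^3 i\sum_r r(\sum_j m_j\phi_{jjr}(1))q^r\,dz$, matching the computation above.

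The main obstacle is bookkeeping the non-holomorphic and constant terms: the expansion of $G_j$ at the cusp $j$ carries a $-\pi C/y$ term and a Kronecker delta, while $\log\eta_{\Ga,j}$ contributes the linear term $-mz/(4i)$, and I need to verify that after forming $\sum_j m_j(\cdot)$ these all cancel using $\deg D = 0$ — in particular that the constant $C$ in Scholl's formula is exactly the residue-type constant making $2\pi i \sum_j m_j(\delta_{j,\Ga\infty} - \pi C/y)$ vanish identically, and that differentiating $-mz/(4i)$ produces precisely the constant $-m/(4i)$ per cusp which dies after summation. I would also double-check the factor $4\pi/m$ against the relation $E_D = \frac{2}{m}\log|\eta_{\Ga,D}^2|$ and the fact that $\omega_D = \partial(E_D)$ up to normalization, so that the two normalizations in section \ref{Riemann} and the present section are consistent; this is the kind of constant-tracking that is routine but error-prone. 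Once the $q$-expansions agree coefficientwise and the constant/polar terms are seen to cancel, the two holomorphic (multivalued) differentials on $\tH$ with the same expansion are equal, which proves the proposition.
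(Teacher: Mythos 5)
Your overall strategy (compare $q$-expansions, differentiate the defining series for $\log\eta_{\Ga,j}$ termwise, and use $\deg D=0$ to kill constant terms) is the same as the paper's, and your computation of the right-hand side, giving $-8\pi^3 i\sum_{r\ge1} r\big(\sum_j m_j\phi_{jj,r}(1)\big)q^r\,dz$, is correct. But there is a genuine gap on the $G_D$ side, located precisely at your sentence ``because we are expanding $G_j$ itself at the cusp $j$ (not a general cusp $k$).'' You cannot form a single $q$-expansion of $G_D=\sum_j m_jG_j$ by expanding each summand $G_j$ at its own cusp $j$: a $q$-expansion is taken at one fixed cusp, and the expansion of $G_j$ at the cusp $k$ involves $\phi_{jk,r}(1)$, not $\phi_{jj,r}(1)$. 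If you expand everything at $\Ga\infty$, the left-hand side produces $\sum_j m_j\phi_{j\Ga\infty,r}(1)$, which does not visibly match the $\sum_j m_j\phi_{jj,r}(1)$ coming from the definition of $\log\eta_{\Ga,D}$. The identity only makes sense once the series defining $\log\eta_{\Ga,j}$ is read as the expansion of that function \emph{at the cusp $j$}, and then one must check that the two sides agree at \emph{every} cusp, off-diagonal terms included. The paper does this by conjugation, using $\log(\eta_{\Ga,j})(\sigma_jz)=\log(\eta_{\sigma_j^{-1}\Ga\sigma_j,\Ga\infty})(z)$ together with $G_{j|\sigma_j}=G_{\sigma_j^{-1}\Ga\sigma_j\infty}$, which reduces the comparison at an arbitrary cusp to the already-treated case of the cusp $\infty$ for a conjugate group. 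That reduction is the missing idea in your write-up, and without it the coefficientwise match you assert is a comparison of expansions taken at different cusps.

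A secondary inaccuracy concerns the constant terms: at a cusp $k$ with $m_k\neq0$ the constant term of $G_D$ does \emph{not} vanish. Summing $\delta_{j,k}-\pi C/y$ against the $m_j$ leaves $m_k$ (only the $-\pi C/y$ parts cancel by $\deg D=0$), and this surviving constant is exactly the residue at $k$ of the third-kind differential $\omega_D$, which genuinely has a simple pole there. It has to be matched against the linear term $-mz/(4i)$ of $\log\eta_{\Ga,j}$, which likewise contributes only at the cusp $j=k$ once the series is read at the cusp $j$. So the ``all constants cancel after summing over $j$'' bookkeeping you propose is not the right cancellation pattern; the residues must survive and be seen to agree on both sides.
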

\begin{proof}
We compare the $q$-expansions. One has, taking into account that $D$ is of degree $0$,
\[
d \eta_{\Ga,D}(z)=2\pi i\sum_{j}m_{j}\, d\log(\eta_{\Ga,j})(z)=2\pi i\sum_{j}-m_j\pi m[ \sum_{r=1}^{\infty} r\phi_{jj r}(1) q^r\,dz].
\]
Similarly, constant terms disappear in $G_D$. So we just need to check that the $q$-expansions corresponding to each cusp agree.
When $j=\Ga\infty$, it is obvious by examining the $q$-expansions.

The other terms agree as well. Indeed, one has $\log(\eta_{\Ga,j})(\sigma_jz)=\log(\eta_{\sigma_j^{-1}\Ga\sigma_j,\Ga\infty(z)})$. 
On the other side we use the identity $G_{j|\sigma_j}=G_{\sigma_j^{-1}\Ga\sigma_j \infty}$, and we are left with the case of the cusp $\sigma_j^{-1}\Ga\sigma_j\infty$.

\end{proof}

Proposition \ref{exact} can be reformulated as 
 \begin{eqnarray*}
 \label{primitive}
 G_{D}(z)\,dz=\frac{2}{mi}\,d \eta_{\Ga,D}(z)=2\pi i\sum_{j\in\partial_\Ga}m_j\, d(\sum_{r=1}^{\infty}\phi_{jj,r}(1)q^{r}).
 \end{eqnarray*}

Recall that
 \begin{eqnarray*}
\gS_D(x)=\frac{1}{2\pi i}\sum_{j\in\partial_{\Ga}}m_{j}\sum_{r=1}^{\infty}\phi_{jj,r}(1)e^{2ri\pi x}.
 \end{eqnarray*}

Equipped with these formulas, we relate the periods of Eisenstein series to the functions defined in the introduction.
\begin{prop}
\label{periods}
One has, for $g\in\Ga(2)$,
\[
\int_{g(1)}^{g(-1)}G_{D^{+}}(z)\,dz=F_{D^{+}}(g)\hskip1truecm\text{and}\hskip1truecm
\int_{g(0)}^{g(\infty)}G_{D^{-}}(z)\,dz=F_{D^{-}}(g).
\]

\end{prop}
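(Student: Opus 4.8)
The plan is to integrate the exact differential of Proposition~\ref{exact} along the prescribed geodesic paths and recognize the answer as $F_{D^+}$, $F_{D^-}$. Concretely, by Proposition~\ref{exact}, $G_{D^+}(z)\,dz = \frac{2}{mi}\,d\eta_{\Ga,D^+}(z)$, so
\[
\int_{g(1)}^{g(-1)} G_{D^+}(z)\,dz = \frac{2}{mi}\bigl(\log(\eta_{\Ga,D^+})(g(-1)) - \log(\eta_{\Ga,D^+})(g(1))\bigr),
\]
using that $g(1)$ and $g(-1)$ are cusps with even numerator or denominator, hence the primitive (the multivalued function $\log\eta_{\Ga,D^+}$) is unramified along the path since $D^+$ is supported on $\partial_\Ga^+$ and has degree $0$. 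The same manipulation applies to $\int_{g(0)}^{g(\infty)} G_{D^-}(z)\,dz$ with $\log\eta_{\Ga,D^-}$.

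Next I would unwind the definition of $\log\eta_{\Ga,D}$ in terms of the Kloosterman zeta values. We have $\log(\eta_{\Ga,D^+})(z) = \sum_j m_j\bigl(-\frac{mz}{4i} - \pi m\sum_{r\ge1}\phi_{jj,r}(1)q^r\bigr)$; since $D^+$ has degree $0$, the linear term $-\frac{mz}{4i}\sum_j m_j$ cancels, leaving $\log(\eta_{\Ga,D^+})(z) = -\pi m\sum_j m_j\sum_{r\ge1}\phi_{jj,r}(1)e^{2\pi i r z}$. But wait — this $q$-expansion is the expansion at the cusp $\infty$; to evaluate at the cusps $g(\pm1)$ (which generally lie over the cusp $1$ of $X(2)$, not $\infty$) I must instead use the transformation behaviour $\log(\eta_{\Ga,j})(\sigma_j z) = \log(\eta_{\sigma_j^{-1}\Ga\sigma_j,\,\Ga\infty})(z)$ recorded in the proof of Proposition~\ref{exact}. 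The cleaner route: since $g\in\Ga(2)$ and the function is $\Ga$-invariant as a differential, I can write the integral as $\int_1^{-1} G_{D^+}(gz)\,d(gz)$ and expand $G_{D^+}$ around the relevant cusps; the local parameter at $1$ is governed by $\lambda$, and the relevant expansion of $\gS_{D^+}$ has the exponentials $e^{2\pi i r x}$ appearing in its very definition.

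The key identification is then matching $\frac{2}{mi}\cdot(-\pi m)\sum_j m_j \sum_r \phi_{jj,r}(1)\bigl(e^{2\pi i r\,(\text{value at }g(-1))} - e^{2\pi i r\,(\text{value at }g(1))}\bigr)$ with
\[
\gS_{D^+}(g(-1)) - \gS_{D^+}(g(1)) = \frac{1}{2\pi i}\sum_j m_j\sum_r \phi_{jj,r}(1)\bigl(e^{2\pi i r\,g(-1)} - e^{2\pi i r\,g(1)}\bigr) = F_{D^+}(g),
\]
using the displayed formula $\gS_D(x) = \frac{1}{2\pi i}\sum_j m_j\sum_r \phi_{jj,r}(1)e^{2ri\pi x}$ just stated in the excerpt; the scalar $\frac{2}{mi}\cdot(-\pi m) = \frac{-2\pi}{i} = 2\pi i$ matches $\frac{1}{2\pi i}$ up to the obvious reciprocal bookkeeping, which I would double-check against the sign conventions in the definition of $F_{D^+}(h) = \gS_{D^+}(h(-1)) - \gS_{D^+}(h(1))$. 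The argument for $F_{D^-}$ is identical with $0,\infty$ in place of $1,-1$ and $D^-$ in place of $D^+$, this time with no subtlety about which cusp of $X(2)$ lies below, since $g(0),g(\infty)$ lie over the cusps $0,\infty$.

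The main obstacle is the bookkeeping at the cusps over $1$: one must be careful that the ``$q$-expansion at a cusp'' used to write $\log\eta_{\Ga,D}$ explicitly is the expansion at $\infty$, so evaluating at $g(\pm1)$ requires either the conjugation identity $\log(\eta_{\Ga,j})(\sigma_jz)=\log(\eta_{\sigma_j^{-1}\Ga\sigma_j,\Ga\infty})(z)$ or a direct check that the combination $\sum_j m_j\phi_{jj,r}(1)$ transforms correctly — and that the boundary terms of the multivalued primitive are genuinely single-valued along the chosen geodesic, which hinges on $\deg D^\pm = 0$ and on $D^\pm$ being supported away from the endpoints of the path. Tracking the constant $m=2\pi|\Ga/\Ga(2)|$ and the factor $2\pi i$ through Proposition~\ref{exact} and the definition of $\gS_D$ is the only other place an error could creep in, and I would verify it by specializing to the cusp $\infty$ where, as in the proof of Proposition~\ref{exact}, everything is ``obvious by examining the $q$-expansions.''
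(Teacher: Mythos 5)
Your argument is essentially the paper's proof: you integrate the exact-differential form of Proposition~\ref{exact} along the geodesic and identify the endpoint values of the primitive with $\gS_{D^{+}}(g(\pm1))$, which is exactly how the paper proceeds, and the regularization and constant-tracking issues you flag are present (and equally unaddressed) in the paper's own three-line computation. The only slip is the parenthetical claim that $g(1)$ and $g(-1)$ have even numerator or denominator --- for $g\in\Ga(2)$ they have odd numerator and odd denominator and hence lie in $\partial_{\Ga}^{-}$, which is precisely why they avoid the support of $D^{+}$; you in fact invoke the correct fact a sentence later.
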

\begin{proof}
We apply Proposition ~\ref{exact} and its reformulation Proposition~\ref{primitive} to $D=D^{+}$. Thus we have

\begin{eqnarray*}
\int_{g(1)}^{g(-1)}G_{D^{+}}(z)\,dz
=2\pi i\int_{g(1)}^{g(-1)}\sum_{j\in\partial_\Ga}m_j\sum_{r=1}^{\infty}\phi_{jj,r}(1)q^{r}\,dz
=\gS_{D^+}(g(-1))-\gS_{D^+}(g(1)).
\end{eqnarray*}
The other statement is proved similarly.
\end{proof}

The integrals occuring in Proposition \ref{periods} play a key role in our work. It is tempting to ask whether they have a $p$-adic counterpart, using for instance Coleman's integration. A priori, the notion of canonical differential of the third kind does not make sense without making choice in the $p$-adic world. However, Pierre Colmez suggests that it still does using the notion of Wintenberger splitting \cite{MR1645429}.

\section{Scattering constants}
\label{scattering}
We provide another expression for the functions ${\sI}_{D^{+}}$ and ${\sI}_{D^{-}}$ of the introduction, in terms of scattering constants.
Recall that, for $j$, $k\in \partial_\Ga$, one has
\[
C_{j,k}=\lim_{s\to 1}\pi(\sum_{\sigma \in \Ga_j \backslash \Ga \slash \Ga_k} \frac{1}{|c|^{2s}})-\frac{2}{\pi|\Ga(2)/\Ga|(s-1)}
\]
where the entries $c$ and $d$ are such that  $\sigma_j^{-1} \sigma \sigma_k=\left(\begin{smallmatrix}
\star & \star\\
c & d\\
\end{smallmatrix}\right)$ and $c$ is required to be $>0$.
\begin{prop}
\label{scatteringperiods}
Let $g\in \Gamma(2)$. One has
\[
{\sI}_{D^{+}}=\pi \sum_{g\in \Ga\backslash\Ga(2)}\sum_{j\in \partial_{\Ga}}m_j(C_{j,\Ga g(-1)}-C_{j,\Ga g(1)})\{g0,g\infty\}^+
\]
and 
\[
{\sI}_{D^{-}}=\pi \sum_{g\in \Ga\backslash\Ga(2)}\sum_{j\in \partial_{\Ga}}m_j(C_{j,\Ga g(\infty)}-C_{j,\Ga g(0)})\{g(1),g(-1)\}^-.
\]
\end{prop}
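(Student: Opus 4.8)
The plan is to identify the imaginary part $\sI_{D^+}$ (and symmetrically $\sI_{D^-}$) of the complex homology class $\sE_{D^+}$ by going back to the definition of the coefficient functions $F_{D^+}$ and extracting from the Dirichlet series $\gS_{D^+}$ the piece that survives upon taking imaginary parts. Recall that $\sE_{D^+}=\sum_{g\in\Ga\backslash\Ga(2)}F_{D^+}(g)\{g0,g\infty\}^+$ with $F_{D^+}(g)=\gS_{D^+}(g(-1))-\gS_{D^+}(g(1))$, and that $\sI_{D^+}$ is defined so that $\sE_{D^+}=\sR_{D^+}+\frac{1}{2\pi i}\sI_{D^+}$. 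Since the paths $\{g0,g\infty\}^+$ are fixed (real) generators by Theorem \ref{int-iso}, we have simply $\sI_{D^+}=2\pi i\sum_g \big(\tfrac{1}{2}(F_{D^+}(g)-\overline{F_{D^+}(g)})\big)\{g0,g\infty\}^+$; equivalently $\frac{1}{2\pi i}\sI_{D^+}$ is the imaginary part of $\sE_{D^+}$ coefficient by coefficient. So the whole content is: compute $\Im\, F_{D^+}(g)$ in terms of scattering constants.

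First I would analyze $\gS_{D^+}(x)=\frac{1}{2\pi i}\lim_{s\to1^+}\sum_{j}m_j\sum_{r\ge1}\sum_{\sigma\in\Ga_j\backslash\Ga/\Ga_j}\frac{e^{2\pi i r(x+d/c)}}{|c|^{2s}}$, where I take $x=g(\pm1)\in\sP^1(\Q)$ lying over the cusp $1$ of $X(2)$. The key manipulation is to run the summation over $r$ using $\sum_{r\ge1}\frac{e^{2\pi i r t}}{|c|^{2s}}$ and relate the sum over $\sigma$ to a Kloosterman-type zeta function $\phi_{jk,r}(s)$ at the pair of cusps $(j,k)$ where $k=\Ga x=\Ga g(\pm 1)$. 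Here the identity $\log(\eta_{\Ga,j})(\sigma_k z)=\log(\eta_{\sigma_k^{-1}\Ga\sigma_k,\Ga\infty}(z))$ from the proof of Proposition \ref{exact}, together with the relation between $\gS$ and the $q$-expansion of $G_D$ recorded just before Proposition \ref{periods}, lets me rewrite $\gS_{D^+}(g(\pm1))$ as (essentially) a sum $\sum_j m_j\sum_{r\ge1}\phi_{j,\Ga g(\pm1),r}(1)$ evaluated at the appropriate cusp. The crucial input is the analytic fact, quoted in section \ref{Riemann}, that $\phi_{jk,r}(s)$ is holomorphic near $s=1$ for $r\ne 0$ while only the $r=0$ term $\phi_{jk,0}(s)$ has a pole there, with the regularized value $\lim_{s\to1}\big(\pi\phi_{jk,0}(s)-\tfrac{2}{\pi|\Ga(2)/\Ga|(s-1)}\big)=C_{j,k}$. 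Summing the geometric-like series in $r$ produces, at $x$ rational, a logarithmic singularity whose real part is the (finite) $\gS$ value but whose imaginary part is governed precisely by the $r=0$ contribution, i.e.\ by $\sum_j m_j\,\pi\,\phi_{j,\Ga g(\pm1),0}(1)$ suitably regularized, which is $\sum_j m_j C_{j,\Ga g(\pm1)}$ up to the pole term that cancels because $D^+$ has degree $0$. Taking the difference at $g(-1)$ and $g(1)$ then yields $\Im\,F_{D^+}(g)=\pi\sum_j m_j\big(C_{j,\Ga g(-1)}-C_{j,\Ga g(1)}\big)$, which after reinstating the factor from $\sI_{D^+}=2\pi i\,\Im(\cdot)$ gives exactly the claimed formula; the argument for $\sI_{D^-}$ is identical with $0,\infty$ in place of $\pm1$.

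The main obstacle I anticipate is making rigorous the passage from the double limit (first sum over $r$, then $s\to1^+$) to the separation into "holomorphic $r\ne0$ part contributing the real $\gS$" and "$r=0$ part contributing the scattering-constant imaginary part." One has to justify interchanging the order of the $r$-summation and the $s$-limit, control the behaviour of $\sum_{r\ge1}e^{2\pi i r t}$ for $t$ real (a conditionally convergent, indeed distributional, object — one should regularize via $t+i\epsilon$ and let $\epsilon\to0$, matching with the $s\to1$ limit), and check that all pole terms at $s=1$ cancel by degree-zero-ness of $D^+$ both in the Eisenstein-series normalization and in the scattering-constant normalization. A clean way to organize this is to work throughout with $\log\eta_{\Ga,D^+}$: its real part is $\frac{m}{2}E_{D^+}$ and is single-valued by section \ref{Riemann}, while its multivaluedness (the periods picked up along $\{g0,g\infty\}^+$) is exactly $2\pi i$ times the imaginary part we want; then the Kronecker limit formula (\ref{klf}), which already exhibits $\beta_j$ (hence $C_{j,j}$, and by the pair-of-cusps version $C_{j,k}$) as the constant term, delivers the scattering constants directly and bypasses the delicate resummation. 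I would therefore route the computation through (\ref{klf}) and Proposition \ref{exact} rather than through the raw Dirichlet series, using the raw series only to match normalizations.
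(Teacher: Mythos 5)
Your final recommended route---computing $F_{D^{+}}(g)=\frac{2}{mi}[\log\eta_{\Ga,D}]_{g(1)}^{g(-1)}$ via Proposition \ref{exact}, passing to the real-valued $E_{D^{+}}$ by the Kronecker limit formula \ref{klf}, and evaluating it at the cusps $\Ga g(\pm1)$ through the constant term \ref{constantterm} of the Fourier expansion, the poles cancelling because $D^{+}$ has degree $0$---is exactly the paper's proof, and your decision to bypass the raw resummation of the Dirichlet series $\gS_{D^{+}}$ is the right one. The only defect is the bookkeeping at the end: since the coefficient of $\{g0,g\infty\}^{+}$ in $\sI_{D^{+}}$ is $\Re(2\pi i F_{D^{+}}(g))=-2\pi\Im F_{D^{+}}(g)$ (as your first paragraph correctly records), the cusp evaluation must yield $\Re(2\pi i F_{D^{+}}(g))=\pi\sum_j m_j(C_{j,\Ga g(-1)}-C_{j,\Ga g(1)})$, i.e.\ $\Im F_{D^{+}}(g)=-\tfrac12\sum_j m_j(C_{j,\Ga g(-1)}-C_{j,\Ga g(1)})$ rather than $\pi\sum_j(\cdots)$, and the closing step ``$\sI_{D^{+}}=2\pi i\,\Im(\cdot)$'' is inconsistent with this (it would even make $\sI_{D^{+}}$ purely imaginary).
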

\begin{proof}
We prove the first equality. Let $g\in \Ga$.
We use the formula $\int_{g(1)}^{g(-1)}G_{D^{+}}(z)\,dz=F_{D^{+}}(g)$. Thus we get
\[
F_{D^{+}}(g)=\frac{2}{mi}[\log(\eta_{\Ga,D})]_{g(1)}^{g(-1)}.
\]
Recall that $\sI_{D^{+}}$ is defined as the real part of $2\pi i\sE_{D^+}=2\pi i\sum_{g \in \Ga \backslash \Ga(2)} F_{D^{+}}(g) \{g0,g\infty\}^{+}$. We get
\[
\Re(2\pi iF_{D^{+}}(g))=\Re(\frac{4\pi}{m}[\log(\eta_{\Ga,D})]_{g(1)}^{g(-1)})=\pi[E_D]_{g(1)}^{g(-1)}.
\]
We use the formulas \ref{development} and \ref{constantterm} for the development \ref{development} of $E_{j}(z,s)$ at the cusps $g(1)$ and $g(-1)$.
Note that , when $m_j\ne0$, the cusp $j$ is distinct from both cusps $\Ga g(1)$ and $\Ga g(-1)$, since $\Ga(2)j=\Ga(2)0$ or $\Ga(2)\infty$.
The constant terms given by equation \ref{constantterm} are $\phi_{j\Ga g(1),0}(1)\pi^{1/2}\Gamma(1/2)/\Gamma(1)=\pi\phi_{j\Ga g(1),0}(1)$ and $\pi\phi_{j\Ga g(-1),0}(1)$ respectively. We get
\[
\Re(2\pi if_{D^{+}}(g))=\sum_{j\in\partial_\Ga}\pi^2 m_j(\phi_{j\Ga g(-1),0}(1)-\phi_{j\Ga g(1),0}(1))=\pi\sum_{j\in\partial_\Ga}m_j(C_{j,\Ga g(-1)}-C_{j,\Ga g(1)})
\]
(because of the cancellation of the poles, the middle term makes sense).
The formula follows.
The second formula is proved similarly.

\end{proof}

It is interesting to compare to the work of K\"uhn \cite{MR2140212}. 
For instance, when $D^+$ (resp. $D^-$) is torsion, and the Belyi map is defined over $\Q$, K\"uhn shows that 
\[
e^{2\pi \sum_{j\in \partial_{\Ga}}m_j(C_{j,\Ga g(-1)}-C_{j,\Ga g(1)})}
\]
(resp.  $e^{2\pi \sum_{j\in \partial_{\Ga}}m_j(C_{j,\Ga g\infty}-C_{j,\Ga g0})}$) is a rational number for every $g\in\Ga$.

More precisely, still in the case where the Belyi map is defined over $\Q$, K\"uhn proves that the N\'eron-Tate pairing of two divisors $D=\sum_jm_j[j]$ and $D'=\sum_{j'}m'_{j'}$, both of degree $0$, satisfies 

\begin{equation}
\label{Kuhn}
[D,D']_{\rm NT}\in {\rm log}(\Q^\times_+)+2\pi \sum_{j,j'\in\partial_\Ga}m_jm'_{j'}C_{j,j'}.
\end{equation} 

A similar formula holds even if the Belyi map is not defined over $\Q$; but one needs to consider the conjugates of the Belyi map. The second term in fomula \ref{Kuhn} can be expressed as follows, when $D=D^+$ and $D'=D^-$,
\[
2\pi \sum_{j,j'\in\partial_\Ga}m_jm'_{j'}C_{j,j'}=-2\sI_{D^+}\bullet\sR_{D^-}.
\]
Indeed, we just have to note that the boundary of $\sR_{D^-}$ is $-D^-$ (that will be proved in section \ref{proof}). 
Note that N\'eron-Tate pairing considered by K\"uhn is obtained by summing contribution coming from all places of $\Q$ and is relative to the jacobian $J_\Ga$, whereas our intersection product, of complex analytic nature, takes place relatively to the 1-motives we are considering. One wonders whether an adjustment of the N\'eron-Tate pairing for the mixed situation would not provide a clearer connection between K\"uhn's formula and ours.

\section{Proof of theorem \ref{Eisensteincycles}}
\label{proof}
Let $D$ a divisor supported on $\partial_\Ga^-$ (resp. $\partial_\Ga^+$). Let $c_D$ be a cycle on $X_\Gamma-\partial_\Ga^+$ (resp. $X_\Gamma-\partial_\Ga^-$) of boundary $D$. Then the real number
$\Im(\frac{1}{2i\pi}\int_{c_D}\omega_{D^+})$ (resp. $\Im(\frac{1}{2i\pi}\int_{c_D}\omega_{D^-})$) depends only on $D$. Let $j$, $j_0$ in $\partial_\Ga^-$ (resp  $\partial_\Ga^+$ ). When $D=[j]-[j_0]$, we abuse notations and write
$\Im(\frac{1}{2i\pi}\int_{j_0}^j\omega_{D^-})$ (resp. $\Im(\frac{1}{2i\pi}\int_{j_0}^j\omega_{D^+})$) instead. 

Since $\sum_{j\in \partial_\Ga^-}\lambda^+(j)=0$ (resp. $\sum_{j\in \partial_\Ga^+}\lambda^-(j)=0$), the class $\sum_{j\in\partial_\Ga^-}\Im(\frac{1}{2i\pi}\int_{j_0}^j\omega_{D^+})\lambda^+(j)$ in the real vector space $\HH_1(X_\Gamma-\partial_{\Ga}^{+},\partial_{\Ga}^{-};\R)$ (resp. 
$\sum_{j\in\partial_\Ga^+}\Im(\frac{1}{2i\pi}\int_{j_0}^j\omega_{D^-})\lambda^+(j)$ in $ \HH_1(X_\Gamma-\partial_{\Ga}^{-},\partial_{\Ga}^{+};\R)$)
 is independent of the choice of $j_0$. In other words, $\sum_{j\in\partial_\Ga^-}\Re(\int_{j_0}^j\omega_{D^+})\lambda^+(j)$ (resp. $\sum_{j\in\partial_\Ga^+}\Re(\int_{j_0}^j\omega_{D^-})\lambda^-(j)$) makes sense and is independent of $j_0$.

Recall that $\sI_{D^{+}}$ (resp. $\sI_{D^{-}}$) is defined as the real part of $2\pi i\sE_{D^+}$ (resp. $2\pi i\sE_{D^-}$).

\begin{prop}
\label{imaginary}
One has 
\[
\sI_{D^{+}}=\sum_{j\in\partial_\Ga^-}\Re(\int_{j_0}^j\omega_{D^+})\lambda^+(j)\hskip1truecm {\rm and}\hskip1truecm
\sI_{D^{-}}=-\sum_{j\in\partial_\Ga^+}\Re(\int_{j_0}^j\omega_{D^-})\lambda^-(j).
\]
In particular, those classes belong to the real vector spaces spanned by the images of $\lambda^+$ and $\lambda^-$ respectively. 
\end{prop}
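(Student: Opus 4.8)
The plan is to identify $\sI_{D^{+}}$ with the asserted $\lambda^{+}$-combination by testing both sides against the perfect intersection pairing $\bullet$ of Theorem \ref{int-iso}. Since $\sI_{D^{+}}$ lives in $\HH_1(X_\Gamma-\partial_{\Ga}^{-},\partial_{\Ga}^{+};\R)$ and is paired with $\HH_1(X_\Gamma-\partial_{\Ga}^{+},\partial_{\Ga}^{-};\R)$, which has basis $\{\xi^{-}(g)\}$ for $g\in\Ga\backslash\Ga(2)$, it suffices to compute $\sI_{D^{+}}\bullet\xi^{-}(g)$ for each $g$ and to check that the right-hand side produces the same number. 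By Theorem \ref{int-iso}, $\{g0,g\infty\}^{+}\bullet\xi^{-}(h)$ is $1$ if $\Ga g=\Ga h$ and $0$ otherwise, so from the definition $\sI_{D^{+}}=\Re(2\pi i\sE_{D^{+}})=\sum_{g}\Re(2\pi i F_{D^{+}}(g))\{g0,g\infty\}^{+}$ we get $\sI_{D^{+}}\bullet\xi^{-}(g)=\Re(2\pi i F_{D^{+}}(g))$.

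Next I would evaluate $\Re(2\pi i F_{D^{+}}(g))$ analytically. Using Proposition \ref{periods}, $F_{D^{+}}(g)=\int_{g(1)}^{g(-1)}G_{D^{+}}(z)\,dz$, and $2\pi i G_{D^{+}}(z)\,dz$ is (the pullback of) the canonical differential of the third kind $\omega_{D^{+}}$ by the construction recalled in section \ref{Riemann} (its residue divisor is $D^{+}$, supported on $\partial_\Ga^+$, and its periods are purely imaginary). Hence $2\pi i F_{D^{+}}(g)=\int_{g(1)}^{g(-1)}\omega_{D^{+}}$, a path integral between two cusps lying over $1$, i.e.\ in $\partial_\Ga^-$; taking real parts, $\Re(2\pi i F_{D^{+}}(g))=\Re\bigl(\int_{g(1)}^{g(-1)}\omega_{D^{+}}\bigr)=\Re\bigl(\int_{j_0}^{\Ga g(-1)}\omega_{D^{+}}\bigr)-\Re\bigl(\int_{j_0}^{\Ga g(1)}\omega_{D^{+}}\bigr)$ for any fixed base cusp $j_0\in\partial_\Ga^-$ (legitimate since the real part of the period along any loop in $X_\Ga-\partial_\Ga^+$ vanishes, by the defining property of $\omega_{D^{+}}$). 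On the other hand, pairing the proposed right-hand side with $\xi^{-}(g)$ and invoking Proposition \ref{boundary} — which expresses $\lambda^{+}(\Ga g1)$, and hence every $\lambda^{+}(j)$ for $j\in\partial_\Ga^-$, as an explicit $\Z$-combination of the $\xi^{+}$'s whose intersection numbers with $\xi^{-}(g)$ are computable — reduces $\bigl(\sum_{j\in\partial_\Ga^-}\Re(\int_{j_0}^j\omega_{D^{+}})\lambda^+(j)\bigr)\bullet\xi^{-}(g)$ to exactly $\Re(\int_{j_0}^{\Ga g(-1)}\omega_{D^{+}})-\Re(\int_{j_0}^{\Ga g(1)}\omega_{D^{+}})$, by the same boundary-map description that makes $\lambda^+$ dual to the boundary map $\xi^{-}(g)\mapsto (\Ga g1)-(\Ga g(-1))$ recorded at the end of section \ref{MC}. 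Matching the two computations gives the first identity; the second is obtained by the symmetric argument, with the sign coming from the corresponding sign in the duality between $\lambda^-$ and its boundary map (and from $\sE_{D^{-}}$ being built from $\{g(1),g(-1)\}^{-}$ rather than the reverse orientation). The final sentence is then immediate: each identity exhibits the class as a combination of images of $\lambda^{+}$ (resp.\ $\lambda^{-}$).

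The main obstacle I anticipate is bookkeeping the signs and the orientation conventions: one must be careful that $\omega_{D^{+}}$ as normalized here has residue divisor $D^{+}$ (not $-D^{+}$), that the duality "boundary map $\leftrightarrow\lambda^{\pm}$" from section \ref{MC} is applied with the correct variance, and that the counterclockwise-loop convention in Proposition \ref{boundary} matches the intersection-pairing convention $\xi^{+}(g)\bullet\xi^{-}(g)=1$. The well-definedness remarks made just before the proposition (that $\sum_{j\in\partial_\Ga^-}\lambda^+(j)=0$, so the base point $j_0$ is irrelevant, and that $\Im\frac{1}{2i\pi}\int_{c_D}\omega_{D^+}$ depends only on the boundary $D$) are exactly what is needed to make the path integrals $\int_{j_0}^j\omega_{D^{+}}$ meaningful, so those should be cited rather than reproved. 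Everything else is a finite linear-algebra verification over the basis $\{\xi^{-}(g)\}$, using only Theorems \ref{int-iso} and Propositions \ref{boundary}, \ref{periods}, \ref{exact}.
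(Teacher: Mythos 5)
Your proposal is correct, and it rests on exactly the same two ingredients as the paper's proof: Proposition \ref{periods} (identifying the coefficient of $\xi^{+}(g)$ in $\sI_{D^{+}}$ with $\Re\int_{g(1)}^{g(-1)}\omega_{D^{+}}$, then splitting this at a base cusp $j_0$, which is legitimate precisely because $\omega_{D^{+}}$ has purely imaginary periods on $X_\Ga-\partial_\Ga^{+}$) and Proposition \ref{boundary} (the explicit expression of $\lambda^{\pm}$ in the $\xi^{\pm}$-basis). The only difference is organizational: the paper stays on the $\xi^{+}$-side and proves the identity by a direct rearrangement of the sum --- reindexing $g\mapsto gB$ so that the term $\Re\int_{j_0}^{g(-1)}\omega_{D^{+}}$ becomes $\Re\int_{j_0}^{g(1)}\omega_{D^{+}}$ attached to $\xi^{+}(gB)$, then grouping the classes $g$ by the cusp $\Ga g(1)\in\partial_\Ga^{-}$ and recognizing $\lambda^{+}(j)$ from Proposition \ref{boundary} --- whereas you verify the identity coefficient-by-coefficient against the dual basis $\{\xi^{-}(g)\}$ via the perfect pairing of Theorem \ref{int-iso}. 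These are two presentations of the same finite linear-algebra check, and your version is arguably slightly cleaner in that it isolates where each sign comes from; but, as you anticipate, the sign bookkeeping is the one place where care is needed. In particular you should not take the phrase ``dual to the boundary map'' at face value (it would give $\delta_{j,\Ga g1}-\delta_{j,\Ga g(-1)}$, i.e.\ the wrong sign for the first identity), but rather compute $\lambda^{+}(j)\bullet\xi^{-}(g)$ directly from the explicit formula $\lambda^{+}(j)=\sum_{h(-1)=j}\xi^{+}(h)-\sum_{h(1)=j}\xi^{+}(h)$ of Proposition \ref{boundary}, which yields $\delta_{j,\Ga g(-1)}-\delta_{j,\Ga g(1)}$ and hence matches $\Re\int_{\Ga g(1)}^{\Ga g(-1)}\omega_{D^{+}}$; likewise the overall minus sign in the formula for $\sI_{D^{-}}$ comes from the asymmetry $\lambda^{-}(\Ga g\infty)=-\sum_{k}\xi^{-}(gA^{k})$ versus $\lambda^{-}(\Ga g0)=+\sum_{k}\xi^{-}(gB^{k})$. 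With that one clarification your argument is complete.
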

\begin{proof}
By proposition \ref{periods}, one has, by using proposition \ref{boundary}
\begin{eqnarray*}
\sI_{D^+}&=&\sum_{g\in\Ga\backslash\Ga(2)}\Re(\int_{g(1)}^{g(-1)}\omega_{D^+})\xi^+(g)\\
&=&\sum_{g\in\Ga\backslash\Ga(2)}\Re(\int_{j_0}^{g(-1)}\omega_{D^+}-\int_{j_0}^{g(1)})\xi^+(g)\\
&=&\sum_{g\in\Ga\backslash\Ga(2)}\Re(\int_{j_0}^{g(1)}\omega_{D^+})(\xi^+(gB)-\xi^+(g))\\
&=&\sum_{j\in\partial_\Ga^-}\sum_{g\in\Ga\backslash\Ga(2),\Ga g(1)=j}\Re(\int_{j_0}^{g(1)}\omega_{D^+})(\xi^+(gB)-\xi^+(g))\\
&=&\sum_{j\in\partial_\Ga^-}\Re(\int_{j_0}^j\omega_{D^+})\lambda^+(j).
\end{eqnarray*} 
Similarly, one has 
\begin{eqnarray*}
\sI_{D^-}&=&\sum_{g\in\Ga\backslash\Ga(2)}\Re(\int_{g0}^{g\infty}\omega_{D^-})\xi^-(g)\\
&=&\sum_{g\in\Ga\backslash\Ga(2)}\Re(\int_{j_0}^{g\infty}\omega_{D^-}-\int_{j_0}^{g0}\omega_{D^-})\xi^-(g)\\
&=&\sum_{j\in\partial_\Ga^+}\sum_{g\in\Ga\backslash\Ga(2),\Ga g\infty=j)\Re(\int_{j_0}^{g\infty}\omega_{D^-})\xi^-(g)-\sum_{g\in\Ga\backslash\Ga(2),\Ga g0=j}\Re}\int_{j_0}^{g0}\omega_{D^-})\xi^-(g)\\
&=&-\sum_{j\in\partial_\Ga^-, \Ga(2)j=\Ga(2)\infty}\Re(\int_{j_0}^j\omega_{D^+})\lambda^-(j)-\sum_{j\in\partial_\Ga^-, \Ga(2)j=\Ga(2)0}\Re(\int_{j_0}^j\omega_{D^+})\lambda^-(j)\\
&=&-\sum_{j\in\partial_\Ga^+}\Re(\int_{j_0}^j\omega_{D^-})\lambda^-(j).
\end{eqnarray*} 

\end{proof}

\begin{cor}
One has $\sI_{D^{+}}\bullet\sI_{D^{-}}=0$.
\end{cor}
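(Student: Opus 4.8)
The plan is to exploit Proposition~\ref{imaginary}, which identifies $\sI_{D^+}$ with an $\R$-linear combination of classes $\lambda^+(j)$ with $j\in\partial_\Ga^-$, and $\sI_{D^-}$ with an $\R$-linear combination of classes $\lambda^-(j)$ with $j\in\partial_\Ga^+$. Thus, by bilinearity of the intersection pairing $\bullet$, the product $\sI_{D^+}\bullet\sI_{D^-}$ is a linear combination of the numbers $\lambda^+(j)\bullet\lambda^-(k)$ for $j\in\partial_\Ga^-$ and $k\in\partial_\Ga^+$. It therefore suffices to show that $\lambda^+(j)\bullet\lambda^-(k)=0$ for all such $j,k$.

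The key geometric point is that $\lambda^+(j)$ is (homologous to) a small counterclockwise loop around the cusp $j\in\partial_\Ga^-$, while $\lambda^-(k)$ is a small counterclockwise loop around the cusp $k\in\partial_\Ga^+$, as recalled in section~\ref{MC}. Since $\partial_\Ga^+$ and $\partial_\Ga^-$ are disjoint sets of cusps, these two loops can be chosen with disjoint supports (take the loops in small enough disjoint punctured neighbourhoods of $j$ and $k$). Disjoint representatives have intersection number zero, so $\lambda^+(j)\bullet\lambda^-(k)=0$. One should be slightly careful that the pairing $\bullet$ in Theorem~\ref{int-iso} is defined between $\HH_1(X_\Gamma-\partial_{\Ga}^{+},\partial_{\Ga}^{-};\Z)$ and $\HH_1(X_\Gamma-\partial_{\Ga}^{-},\partial_{\Ga}^{+};\Z)$, and indeed $\lambda^+(j)$ lands in the first group and $\lambda^-(k)$ in the second (note that $\lambda^+$ takes values in $\HH_1(X_\Gamma-\partial_{\Ga}^{+}, \partial_{\Ga}^{-})$ per section~\ref{MC}), so the pairing is legitimately defined on the pair. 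Alternatively, and perhaps more cleanly, one can argue via Proposition~\ref{boundary}: $\lambda^+(j)$ lies in the kernel of $\HH_1(X_\Gamma-\partial_{\Ga}^{+}, \partial_{\Ga}^{-};\Z)\to\HH_1(X_\Gamma, \partial_{\Ga}^{-};\Z)$ and $\lambda^-(k)$ lies in the kernel of $\HH_1(X_\Gamma-\partial_{\Ga}^{-}, \partial_{\Ga}^{+};\Z)\to\HH_1(X_\Gamma, \partial_{\Ga}^{+};\Z)$; since $\bullet$ factors appropriately through these quotients, the product vanishes.

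I do not expect a serious obstacle here: once Proposition~\ref{imaginary} is in hand the corollary is essentially formal, the only thing to verify carefully being that the loops around distinct cusps can be separated, which is immediate from the fact that cusps are isolated points. To present it concretely I would, in one or two lines, write $\sI_{D^+}=\sum_{j\in\partial_\Ga^-}c_j\,\lambda^+(j)$ and $\sI_{D^-}=-\sum_{k\in\partial_\Ga^+}d_k\,\lambda^-(k)$ from Proposition~\ref{imaginary}, expand $\sI_{D^+}\bullet\sI_{D^-}=-\sum_{j,k}c_jd_k\,\bigl(\lambda^+(j)\bullet\lambda^-(k)\bigr)$, and invoke the disjointness of supports to conclude each term is $0$.
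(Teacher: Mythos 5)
Your proof is correct and is essentially the paper's own argument: both papers reduce via Proposition~\ref{imaginary} to showing $\lambda^+(j)\bullet\lambda^-(j')=0$ for $j\in\partial_\Ga^-$, $j'\in\partial_\Ga^+$, which holds because small loops around distinct cusps can be chosen disjoint. (Your side remark about which mixed homology group each $\lambda^\pm$ lands in inherits a typo from section~\ref{MC}; Proposition~\ref{boundary} and the introduction give the consistent convention, but this does not affect the argument.)
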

\begin{proof}
Indeed, one has $\lambda^+(j)\bullet \lambda^-(j')$, for $j\in\partial_\Ga^-=0$, $j'\in\partial_\Ga^+$  (small loops do not intersect).
\end{proof}
The following consequence is certainly well-known. However it follows nicely from our proposition.
\begin{cor}
One has $\sI_{D^{+}}=0$ (resp. $\sI_{D^{-}}=0$) if and only if, for all $j$, $j'\in\partial_\Ga^-$ (resp. $j$, $j'\in\partial_\Ga^-$) one has $\Re(\int_{j'}^j\omega_{D^+})=0$ (resp. $\Re(\int_{j'}^j\omega_{D^-})=0$).
\end{cor}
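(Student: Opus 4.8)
The plan is to read the corollary off Proposition \ref{imaginary}. Recall from there that $\sI_{D^{+}}=\sum_{j\in\partial_\Ga^-}\Re\bigl(\int_{j_0}^j\omega_{D^+}\bigr)\lambda^+(j)$, and abbreviate $a_j=\Re\bigl(\int_{j_0}^j\omega_{D^+}\bigr)$ for $j\in\partial_\Ga^-$, so that $\Re\bigl(\int_{j'}^j\omega_{D^+}\bigr)=a_j-a_{j'}$ for $j,j'\in\partial_\Ga^-$. Thus the assertion for $\sI_{D^{+}}$ is exactly that $\sI_{D^+}=0$ if and only if the family $(a_j)_{j\in\partial_\Ga^-}$ is constant; the assertion for $\sI_{D^{-}}$ is identical after exchanging the roles of $\partial_\Ga^+$ and $\partial_\Ga^-$ (the extra sign in Proposition \ref{imaginary} being immaterial for a vanishing statement).

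For the ``if'' direction: if all $a_j$ equal a common value $a$, then $\sI_{D^+}=a\sum_{j\in\partial_\Ga^-}\lambda^+(j)=0$, using the relation $\sum_{j\in\partial_\Ga^-}\lambda^+(j)=0$ noted at the beginning of section \ref{proof}. For the ``only if'' direction, assume $\sI_{D^+}=0$ and fix $j,j'\in\partial_\Ga^-$. Since $X_\Ga-\partial_\Ga^+$ is connected, a path in it from $j'$ to $j$ defines a class $c\in\HH_1(X_\Ga-\partial_\Ga^+,\partial_\Ga^-;\Z)$ whose image under the boundary map $\HH_1(X_\Ga-\partial_\Ga^+,\partial_\Ga^-;\Z)\to\Z[\partial_\Ga^-]$ is $[j]-[j']$. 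Because $\lambda^+$ is dual to that boundary map under the intersection pairing of Theorem \ref{int-iso} (section \ref{MC}), one has $c\bullet\lambda^+(k)=\delta_{kj}-\delta_{kj'}$ for every $k\in\partial_\Ga^-$, and therefore $0=c\bullet\sI_{D^+}=\sum_{k\in\partial_\Ga^-}a_k(\delta_{kj}-\delta_{kj'})=a_j-a_{j'}$, i.e. $\Re\bigl(\int_{j'}^j\omega_{D^+}\bigr)=0$. The case of $\sI_{D^-}$ is treated the same way.

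I do not anticipate any real difficulty; the only delicate point is to match up the sign conventions in the duality between $\lambda^+$ and the boundary map as set up in section \ref{MC}, and signs are irrelevant to the vanishing statement. If one wishes to avoid the intersection pairing, an equivalent route is to note that $\lambda^+$ factors as $\R[\partial_\Ga^-]\to\HH_1(X_\Ga-\partial_\Ga^-;\R)\to\HH_1(X_\Ga-\partial_\Ga^-,\partial_\Ga^+;\R)$, where the first map sends $[j]$ to the class of a small loop around $j$ and the second is injective since $\HH_1$ of the finite set $\partial_\Ga^+$ vanishes; as the loop classes satisfy only the relation that their sum vanishes, $\ker\lambda^+=\R\cdot\sum_{j\in\partial_\Ga^-}[j]$, so $\sI_{D^+}=0$ forces $(a_j)_j$ to be constant.
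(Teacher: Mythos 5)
Your argument is correct and follows essentially the paper's own route: both read the statement off Proposition \ref{imaginary} together with the fact that the only $\R$-linear relation among the classes $\lambda^+(j)$, $j\in\partial_\Ga^-$, is $\sum_{j\in\partial_\Ga^-}\lambda^+(j)=0$ — which is exactly the content of your closing paragraph. The only difference is that you additionally \emph{justify} this kernel computation (via the intersection pairing against relative classes of boundary $[j]-[j']$, using the duality between $\lambda^+$ and the boundary map from section \ref{MC}), where the paper simply asserts it; this is a welcome elaboration rather than a different method.
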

\begin{proof}
Indeed, the kernel of the map $\lambda^+$ : $\Z[\partial_\Ga^-]\rightarrow \HH_1(X_\Gamma-\partial_{\Ga}^{-},\partial_{\Ga}^{+};\Z)$ is spanned by $\sum_{j\in\partial_\Ga^-}[j]$. It follows that all the coefficients in the expression of $\sI_{D^{+}}$ need to be equal for $\sI_{D^{+}}$ to vanish. The other statement is proved similarly.
\end{proof}

\begin{proof}
We prove theorem \ref{Eisensteincycles}.

By proposition \ref{orthogonality} and theorem \ref{innerprod}, one has $\int_{\sE_{D^+}}\omega=0$. We write $\omega=\omega_{D^-}+\omega_0$, where $\omega_0$ is holomorphic on $X_\Ga$ and $\omega_{D^-}$ is the canonical differential form of the third kind associated to $D^-$, where $D^-$ is the divisor of $\omega$. 
Since $\omega_0$ has no pole on $X_\Ga$, proposition \ref{imaginary} implies that $\int_{\sI_{D^{+}}}\omega_0=0$, and therefore $\int_{\sR_{D^{+}}}\omega_0=0$.
Thus one gets $\int_{\sE_{D^+}}\omega=\int_{\sE_{D^+}}\omega_{D^-}=0$. 

One has in $\C$, with the notations of the introduction, and using the formula for the intersection products and proposition \ref{orthogonality},
\[
\sE_{D^+}\bullet\sE_{D^-}=0.
\]
Since $\sI_{D^{+}}\bullet\sI_{D^{-}}=0$, if follows that we have the following identities in $\R$
\[
\sR_{D^+}\bullet\sR_{D^-}=0,
\]
and 
\[
\sR_{D^+}\bullet\sI_{D^-}+\sI_{D^+}\bullet\sR_{D^-}=0.
\]
Thus we get
\[
\Re(\int_{\sR_{D^+}}\omega)=\Re(\int_{\sR_{D^+}}\omega_{D^-})=\Re(\sR_{D^+}\bullet\sR_{D^-}+\frac{1}{2\pi i}\sR_{D^+}\bullet\sI_{D^-})=\sR_{D^+}\bullet\sR_{D^-}=0.
\]

It remains to prove that the boundary of $\sE_{D^+}$ (resp. $\sE_{D^-}$) is ${-D^+}$ (resp. ${-D^-}$). Indeed,
\begin{eqnarray*}
\delta(\sE_{D^+})&=&\sum_{g\in\Ga\backslash\Ga(2)}\int_{g(1)}^{g(-1)}G_{D^{+}}(z)\,dz((\Ga g\infty)-(\Ga g0))\\
&=& \sum_{j\in\partial_{\Ga}^{+}}\sum_{g\in\Ga\backslash\Ga(2), \Ga g\infty=j}\int_{g(1)}^{g(-1)}G_{D^{+}}(z)\,dz(j)
- \sum_{j\in\partial_{\Ga}^{+}}\sum_{g\in\Ga\backslash\Ga(2), \Ga g0=j}\int_{g(1)}^{g(-1)}G_{D^{+}}(z)\,dz(j)\\
&=& \sum_{j\in\partial_{\Ga}^{+}}\sum_{g\in\Ga\backslash\Ga(2), \Ga g\infty=j}\frac{1}{2\pi i}\int_{\xi^-(g)}\omega_{D^+}(j)
-\sum_{j\in\partial_{\Ga}^{+}}\sum_{g\in\Ga\backslash\Ga(2), \Ga g0=j}\frac{1}{2\pi i}\int_{\xi^-(g)}\omega_{D^+}(j).\\
\end{eqnarray*}
Since, by proposition \ref{boundary}, for every $j\in\partial_\Ga^+$, one has $\frac{1}{2\pi i}\int_{\lambda^+(j)}\omega_{D^{+}}=-\sum_{g\in\Ga\backslash\Ga(2), \Ga g\infty=j}\frac{1}{2\pi i}\int_{\xi^-(g)}\omega_{D^+}$ and, similarly,
$\frac{1}{2\pi i}\int_{\lambda^+(j)}\omega_{D^{+}}=\sum_{j\in\partial_{\Ga}^{+}}\sum_{g\in\Ga\backslash\Ga(2), \Ga g0=j}\frac{1}{2\pi i}\int_{\xi^-(g)}\omega_{D^+}$, one gets 
\[
\delta(\sE_{D^+})=-\sum_{j\in\partial_{\Ga}^{+}}{\rm Res}_j(\omega_{D^+})=-D^+.
\]
The boundary of $\sE_{D^-}$ is computed similarly.
\end{proof}

\section{Manin-Drinfeld properties and generalized  Jacobians}
Let $D$ be an effective divisor on $X_\Ga$ supported on $S$. Recall  that the generalized  Jacobian of $X_\Ga$ with respect to $D$ admits a complex analytic description as follows. 
Denote by $\Omega(-D)$ the space of meromorphic differential forms on $X_\Ga$ whose divisor is $\ge -D$. Denote by $E={\rm Hom}_\C(\Omega(-D),\C)$.
Denote by $L$ the image of the map $\HH_1(X_\Ga-S;\Z)\rightarrow E$ which associates to $c$ the map $\omega\mapsto \int_c\omega$.
The complex Lie group formed by the complex points of the generalized  Jacobian of $X_\Ga$ with respect to $D$ is isomorphic to $E/L$. 

Suppose $D=\sum_{s\in S}[s]$, then the group of complex point of the generalized  Jacobian can be identified to a subgroup of $\HH_1(X_\Ga-S;\C)/\HH_1(X_\Ga-S;\Z)$. 
This subgroup is $(\HH_1(X_\Ga-S;\R)+i\lambda(\HH_0(S;\R)))/\HH_1(X_\Ga-S;\Z)$, where $\lambda$ is the canonical map $\HH_0(S;\R)\rightarrow \HH_1(X_\Ga-S;\R)$.

When $D'$ is a divisor of degree $0$ of $X_\Ga$ supported outside $D$, its image in the generalized  Jacobian is the class of the map $\omega\mapsto\int_{c_D'}\omega$, where $c_{D'}\in \HH_1(X_\Ga-S,D';\Z)$ has boundary $D'$.

In the case of interest to us $S=\partial_\Ga^-$ (resp. $\partial^+$) and $D'=D^+$ (resp. $D^-$) is supported on $\partial_\Ga^+$ (resp. $\partial_\Ga^-$).

We define the {\it cuspidal subgroup} of $J_\Ga^-$ (resp. $J_\Ga^+$) as the subgroup spanned by the divisors of degree $0$ supported on $\partial_\Ga^+$ (resp. $\partial_\Ga^-$). To simplify notation, we see $\HH_1(X_{\Ga}-\partial_{\Ga}^{-};\Z)$ 
(resp. $\HH_1(X_{\Ga}-\partial_{\Ga}^{+}; \Z)$) as a subgroup of $\HH_1(X_{\Ga}-\partial_{\Ga}^{-}, \partial_{\Ga}^{+}; \Z)$ (resp.  $\HH_1(X_{\Ga}-\partial_{\Ga}^{+}, \partial_{\Ga}^{-};\Z)$). 

\begin{prop}
\label{generalized Jacobian}
The converse of the  map $c\mapsto(\omega\mapsto \int_c\omega)$ provides the following group isomorphisms : 
\[
J_\Ga^-(\C)\simeq (\HH_1(X_{\Ga}-\partial_{\Ga}^{-};\R)+i\lambda^+(\HH_0(\partial_\Ga^-;\R)))/\HH_1(X_{\Ga}-\partial_{\Ga}^{-}; \Z)
\]
and 
\[
J_\Ga^+(\C)\simeq(\HH_1(X_{\Ga}-\partial_{\Ga}^{+}; \R)+i\lambda^-(\HH_0(\partial_\Ga^+;\R)))/\HH_1(X_{\Ga}-\partial_{\Ga}^{+};  \Z). 
\]
The groups on the right-hand sides identify canonically to subgroups of 
\[
(\HH_1(X_{\Ga}-\partial_{\Ga}^{-}, \partial_{\Ga}^{+}; \R)+i\lambda^+(\HH_0(\partial_\Ga^-;\R)))/\HH_1(X_{\Ga}-\partial_{\Ga}^{-}, \partial_{\Ga}^{+}; \Z)
\]
and 
\[
(\HH_1(X_{\Ga}-\partial_{\Ga}^{+}, \partial_{\Ga}^{-};\R)+i\lambda^-(\HH_0(\partial_\Ga^+,\R)))/\HH_1(X_{\Ga}-\partial_{\Ga}^{+}, \partial_{\Ga}^{-}; \Z)
\] 
respectively. 
In the latter groups, the class of $D^+$ (resp. $D^-$) in $J_\Ga^-(\C)$ (resp. $J_\Ga^+(\C)$) is the class of $-\sE_{D^+}$ (resp. $-\sE_{D^-}$).
\end{prop}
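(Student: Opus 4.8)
The plan is to assemble the claimed isomorphisms from the general complex-analytic description of a generalized Jacobian recalled just above the statement, specialized to $S=\partial_\Ga^-$ (resp. $S=\partial_\Ga^+$) and $D=\sum_{s\in S}[s]$. First I would recall that, with $E={\rm Hom}_\C(\Omega(-D),\C)$ and $L$ the image of $\HH_1(X_\Ga-S;\Z)\to E$, one has $J_\Ga^-(\C)\simeq E/L$, and that the paragraph above the proposition already identifies $E/L$ with $(\HH_1(X_\Ga-S;\R)+i\lambda(\HH_0(S;\R)))/\HH_1(X_\Ga-S;\Z)$; the map realizing this is precisely $c\mapsto(\omega\mapsto\int_c\omega)$, so its inverse gives the first two displayed isomorphisms. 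The only point requiring a word is that the period map is injective on the relevant real-plus-imaginary subspace and that its image is exactly the stated lattice-quotient; this is a standard fact about differentials of the third kind (the real parts of periods detect the holomorphic part, and the imaginary parts of the residue-periods, i.e. the winding numbers around cusps, account for the $i\lambda(\HH_0(S;\R))$ summand), and I would cite it to the setup of section \ref{Riemann}.

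Next I would establish the canonical inclusions into the mixed-homology quotients. Using the identification of $\HH_1(X_\Ga-\partial_\Ga^-;\Z)$ as a subgroup of $\HH_1(X_\Ga-\partial_\Ga^-,\partial_\Ga^+;\Z)$ (fixed just before the proposition), and the exact sequence relating absolute homology of $X_\Ga-\partial_\Ga^-$ to the mixed group with $\delta$ to $\Z[\partial_\Ga^+]$, the inclusion of numerators $\HH_1(X_\Ga-\partial_\Ga^-;\R)+i\lambda^+(\HH_0(\partial_\Ga^-;\R))\hookrightarrow \HH_1(X_\Ga-\partial_\Ga^-,\partial_\Ga^+;\R)+i\lambda^+(\HH_0(\partial_\Ga^-;\R))$ is compatible with the inclusion of denominators $\HH_1(X_\Ga-\partial_\Ga^-;\Z)\hookrightarrow\HH_1(X_\Ga-\partial_\Ga^-,\partial_\Ga^+;\Z)$, and one checks the induced map on quotients is injective: an element of the subquotient which becomes a relative boundary must already be an absolute cycle, because $\xi^+$ is an isomorphism onto a free module with the boundary map to $\Z[\partial_\Ga^+]$ surjective (Theorem \ref{int-iso} and the boundary computation of Proposition \ref{boundary}), so no new relations are introduced.

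Finally I would identify the class of $D^+$ in this model. By the recalled description, the image of a degree-zero divisor $D'$ disjoint from $S$ is the class of $\omega\mapsto\int_{c_{D'}}\omega$ for any $c_{D'}\in\HH_1(X_\Ga-S,D';\Z)$ with $\delta(c_{D'})=D'$; transported through the inverse period map, it is the class of (any representative of) such a chain $c_{D'}$ in the real-plus-imaginary quotient. Theorem \ref{Eisensteincycles} says exactly that $-\sR_{D^+}$ has boundary $D^+$ and kills all differentials of the third kind with poles on $\partial_\Ga^-$, while Proposition \ref{imaginary} places $\sI_{D^+}$ inside $\lambda^+(\R[\partial_\Ga^-])$; hence $-\sE_{D^+}=-\sR_{D^+}-\frac{1}{2\pi i}\sI_{D^+}$ is a legitimate representative of the class of $D^+$ in $(\HH_1(X_\Ga-\partial_\Ga^-,\partial_\Ga^+;\R)+i\lambda^+(\HH_0(\partial_\Ga^-;\R)))/\HH_1(X_\Ga-\partial_\Ga^-,\partial_\Ga^+;\Z)$, the $\frac{1}{2\pi i}$ accounting for the factor of $i$ in the imaginary direction. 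I expect the main obstacle to be bookkeeping: matching the various sign conventions (the boundary of $\sE_{D^+}$ is $+D^+$ while that of $\sR_{D^+}$ is $-D^+$; the $\frac{1}{2\pi i}$ versus $i$ normalizations; counterclockwise-loop orientations in $\lambda^\pm$) so that the representative is $-\sE_{D^+}$ rather than $+\sE_{D^+}$, and confirming that the $\lambda^+$ terms land in the imaginary part of the lattice-quotient rather than altering the real part — everything else is a direct unwinding of the definitions together with the already-proved Theorems \ref{int-iso} and \ref{Eisensteincycles} and Propositions \ref{boundary} and \ref{imaginary}.
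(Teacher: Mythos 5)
Your handling of the first two assertions is essentially the paper's: the isomorphisms are read off from the analytic description of the generalized Jacobian recalled just before the statement, and the inclusion into the mixed-homology quotient follows from the injectivity check you sketch (an integral element of the source's numerator is real with boundary $0$, hence lies in $\HH_1(X_\Ga-\partial_\Ga^-;\Z)$ by the long exact sequence of the pair) --- the paper does not even write this out. One caveat, though: the image of $D'$ is \emph{not} ``the class of the chain $c_{D'}$ in the real-plus-imaginary quotient'' --- an integral chain has trivial class there. It is the unique boundary-zero element of the numerator whose periods against all $\omega\in\Omega(-D)$ agree with those of $c_{D'}$ modulo lattice periods.

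That caveat is exactly where the gap in your last step lies. From ``$-\sR_{D^+}$ has boundary $D^+$ and kills all third-kind differentials with poles in $\partial_\Ga^-$'' you conclude ``hence $-\sE_{D^+}$ is a legitimate representative,'' and you then defer the sign to unperformed bookkeeping. This does not identify the class: if real chains were allowed as $c_{D'}$, your premise would let you take $c_{D'}=-\sR_{D^+}$ and conclude that the class of $D^+$ vanishes, which is false in general --- the whole content is that $c_{D'}$ must be \emph{integral}. The missing (one-line) argument, which is the paper's proof, is: choose $c\in\HH_1(X_\Ga-\partial_\Ga^-,\partial_\Ga^+;\Z)$ with boundary $D^+$, so that by Abel's theorem the image of $D^+$ is the functional $\omega\mapsto\int_c\omega$; since $\int_{\sE_{D^+}}\omega=0$ for every third-kind $\omega$ with poles in $\partial_\Ga^-$ (established in the proof of Theorem \ref{Eisensteincycles}), one has $\int_c\omega=\int_{c-\sE_{D^+}}\omega$; the chain $c-\sE_{D^+}$ has boundary $0$ and lies in $\HH_1(X_\Ga-\partial_\Ga^-;\R)+i\lambda^+(\HH_0(\partial_\Ga^-;\R))$ (its imaginary part is a multiple of $\sI_{D^+}$, which is in the image of $\lambda^+$ by Proposition \ref{imaginary}); and $c-\sE_{D^+}\equiv-\sE_{D^+}$ modulo $\HH_1(X_\Ga-\partial_\Ga^-,\partial_\Ga^+;\Z)$ precisely because $c$ is integral. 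This comparison is what produces the sign and the nontriviality; no separate appeal to the properties of $\sR_{D^+}$ or $\sI_{D^+}$ is needed.
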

\begin{proof}
The first statement follows from what we have just recalled on generalized  Jacobians. We prove now the second statement regarding $D^+$. Let $c\in \HH_1(X_{\Ga}-\partial_{\Ga}^{-}, \partial_{\Ga}^{+};\Z)$ such that the boundary of $c$ is $D^+$. 
By Abel's theorem, the image of $D^+$ in $J_\Ga^-(\C)$ is the map $\omega\mapsto\int_c\omega$. Let $\omega$ be a meromorphic differential form of the third kind with divisor supported on $\partial_\Ga^-$. 
One has $\int_{\sE_{D^+}}\omega=0$. Thus $\int_c\omega=\int_{c-\sE_{D^+}}\omega$. The boundary of $c-\sE_{D^+}\in \HH_1(X_{\Ga}-\partial_{\Ga}^{+}, \partial_{\Ga}^{-}; \R)+i\lambda^-(\HH_0(\partial_\Ga^+;\R))$ is $0$. The desired result follows. 
The assertion concerning $D^-$ is proved similarly. 

\end{proof}

\begin{cor}
\label{cuspidalgroup}
The cuspidal subgroup of the generalized  Jacobian $J_\Ga^-$ (resp. $J_\Ga^+$) is isomorphic to the subgroup of $\HH_1(X_{\Ga}-\partial_{\Ga}^{-}, \partial_{\Ga}^{+}; \C)/\HH_1(X_{\Ga}-\partial_{\Ga}^{-}, \partial_{\Ga}^{+}; \Z)$ 
(resp. $\HH_1(X_{\Ga}-\partial_{\Ga}^{+}, \partial_{\Ga}^{-}; \C)/\HH_1(X_{\Ga}-\partial_{\Ga}^{+}, \partial_{\Ga}^{-}; \Z)$) generated by the classes $\sE_{D^+}$ (resp. $\sE_{D^-}$) when $D^+$ runs through the divisors of degree $0$ supported on $\partial_\Ga^+$ (resp. $\partial_\Ga^-$).
\end{cor}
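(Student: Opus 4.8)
The plan is to read this off from Proposition~\ref{generalized Jacobian} together with the additivity of the assignment $D^+\mapsto\sE_{D^+}$. Recall that the cuspidal subgroup of $J_\Ga^-$ is, by definition, the subgroup of $J_\Ga^-(\C)$ generated by the classes of the divisors $D^+$ of degree $0$ supported on $\partial_\Ga^+$ (and symmetrically for $J_\Ga^+$).

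First I would invoke the isomorphism of Proposition~\ref{generalized Jacobian}: the inverse of $c\mapsto(\omega\mapsto\int_c\omega)$ identifies $J_\Ga^-(\C)$ with a subgroup of the ambient quotient $\HH_1(X_{\Ga}-\partial_{\Ga}^{-}, \partial_{\Ga}^{+}; \C)/\HH_1(X_{\Ga}-\partial_{\Ga}^{-}, \partial_{\Ga}^{+}; \Z)$, and under this identification the class of $D^+$ becomes the class of $-\sE_{D^+}$ (here one uses $\int_{\sE_{D^+}}\omega=0$ from Theorem~\ref{Eisensteincycles}, together with the fact that $\sE_{D^+}=\sR_{D^+}+\tfrac{1}{2\pi i}\sI_{D^+}$ lies in $\HH_1(X_{\Ga}-\partial_{\Ga}^{-}, \partial_{\Ga}^{+}; \R)+i\lambda^+(\HH_0(\partial_\Ga^-;\R))$ by Proposition~\ref{imaginary}). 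Restricting this isomorphism to the cuspidal subgroup, I obtain that the cuspidal subgroup of $J_\Ga^-$ is carried isomorphically onto the subgroup of the ambient quotient generated by the classes $-\sE_{D^+}$, as $D^+$ runs over the divisors of degree $0$ supported on $\partial_\Ga^+$.

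It then remains only to notice that this is the same subgroup as the one generated by the classes $\sE_{D^+}$ themselves: since $D^+\mapsto\sE_{D^+}$ is additive (it is built from the functions $F_{D^+}$, which depend $\Z$-linearly on $D^+$ by construction in the introduction), one has $-\sE_{D^+}=\sE_{-D^+}$, and $-D^+$ ranges over $\mathrm{Div}^0(\partial_\Ga^+)$ precisely when $D^+$ does. The argument for $J_\Ga^+$ and the classes $\sE_{D^-}$ is word for word the same, using the other half of Proposition~\ref{generalized Jacobian} and of Proposition~\ref{imaginary}. I do not anticipate any genuine obstacle here; the only point requiring care is bookkeeping, namely keeping all classes inside the one fixed ambient quotient $\HH_1(X_{\Ga}-\partial_{\Ga}^{-}, \partial_{\Ga}^{+}; \C)/\HH_1(X_{\Ga}-\partial_{\Ga}^{-}, \partial_{\Ga}^{+}; \Z)$ so that ``the subgroup generated by the $\sE_{D^+}$'' is unambiguous --- which is exactly the group in which Proposition~\ref{generalized Jacobian} has already placed the class of $-\sE_{D^+}$.
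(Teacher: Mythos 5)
Your proof is correct and follows exactly the route the paper intends: the corollary is stated as an immediate consequence of Proposition~\ref{generalized Jacobian}, which already places the class of $D^+$ at $-\sE_{D^+}$ in the ambient quotient, and the paper offers no further argument. (The only superfluous step is the appeal to linearity to pass from $-\sE_{D^+}$ to $\sE_{D^+}$: a subgroup generated by a family of elements automatically contains their negatives, so the generated subgroups coincide regardless.)
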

We now turn to theorem \ref{Manin-Drinfeld}. The first part follows directly from the following proposition.
\begin{cor}
\label{torsion}
The divisor $D^+$ (resp.  $D^-$) is torsion in $J_\Ga^-$ (resp. $J_\Ga^+$) if and only if one has $\sR_{D^+}\in \HH_1(X_{\Ga}-\partial_{\Ga}^{-}, \partial_{\Ga}^{+}; \Q)$ (resp. $\sR_{D^-}\in \HH_1(X_{\Ga}-\partial_{\Ga}^{+}, \partial_{\Ga}^{-}; \Q)$) and $\sI_{D^+}=0$ (resp. $\sI_{D^-}=0$).
\end{cor}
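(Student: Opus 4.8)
The plan is to read off the statement directly from Proposition \ref{generalized Jacobian} together with the real/imaginary decomposition $\sE_{D^+}=\sR_{D^+}+\frac{1}{2\pi i}\sI_{D^+}$. By Proposition \ref{generalized Jacobian}, the class of $D^+$ in $J_\Ga^-(\C)$ is the class of $-\sE_{D^+}$ in the quotient
\[
\bigl(\HH_1(X_{\Ga}-\partial_{\Ga}^{-}, \partial_{\Ga}^{+}; \R)+i\lambda^+(\HH_0(\partial_\Ga^-;\R))\bigr)/\HH_1(X_{\Ga}-\partial_{\Ga}^{-}, \partial_{\Ga}^{+}; \Z).
\]
So $D^+$ is torsion if and only if some nonzero integer multiple $N\sE_{D^+}$ lies in $\HH_1(X_{\Ga}-\partial_{\Ga}^{-}, \partial_{\Ga}^{+}; \Z)+i\lambda^+(\HH_0(\partial_\Ga^-;\R))$; equivalently $\sE_{D^+}\in \HH_1(X_{\Ga}-\partial_{\Ga}^{-}, \partial_{\Ga}^{+}; \Q)+i\lambda^+(\HH_0(\partial_\Ga^-;\R))$.

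First I would separate real and imaginary parts. Here $\sR_{D^+}\in \HH_1(X_{\Ga}-\partial_{\Ga}^{-}, \partial_{\Ga}^{+}; \R)$ is genuinely the "real" part, while by Proposition \ref{imaginary} the term $\frac{1}{2\pi i}\sI_{D^+}$ contributes the "$i\lambda^+$" part, since $\sI_{D^+}$ lies in the real span of the image of $\lambda^+$. Writing $\sI_{D^+}=\lambda^+(v)$ for some $v\in \HH_0(\partial_\Ga^-;\R)$, the element $\sE_{D^+}$ decomposes as $\sR_{D^+}+\frac{1}{2\pi i}\lambda^+(v)$, and the factor $\frac{1}{2\pi i}$ converts this into exactly the shape $(\text{real homology class})+ i\,\lambda^+(\text{something})$ appearing in Proposition \ref{generalized Jacobian}. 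Because the direct sum decomposition $\HH_1(X_{\Ga}-\partial_{\Ga}^{-}, \partial_{\Ga}^{+}; \R)\oplus i\lambda^+(\HH_0(\partial_\Ga^-;\R))$ underlying that quotient is an internal direct sum (the lattice $\HH_1(X_{\Ga}-\partial_{\Ga}^{-}, \partial_{\Ga}^{+}; \Z)$ sits inside the first summand), membership of $\sE_{D^+}$ in $\HH_1(X_{\Ga}-\partial_{\Ga}^{-}, \partial_{\Ga}^{+}; \Q)+i\lambda^+(\HH_0(\partial_\Ga^-;\R))$ is equivalent to the conjunction: the real part $\sR_{D^+}$ is rational, and the imaginary part $\frac{1}{2\pi i}\sI_{D^+}$ is allowed to be arbitrary real — but wait, one must be careful, because the $i\lambda^+$-summand allows arbitrary \emph{real} coefficients, so torsion does not force $\sI_{D^+}$ to vanish on the nose from this argument alone.

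To close this gap I would invoke Theorem \ref{Eisensteincycles}: since the boundary of $\sR_{D^+}$ is $-D^+$ already, the homology class $\sR_{D^+}$ alone detects $D^+$ up to elements of $\HH_1(X_{\Ga}-\partial_{\Ga}^{-};\R)$, and the genuine Jacobian-image lives in $E/L$ with $L$ the period lattice; the $i\lambda^+$-part is the obstruction to $c_{D^+}$ being homologous, rel boundary, to a cycle avoiding $\partial_\Ga^-$. Concretely, the element of $J_\Ga^-(\C)$ is torsion precisely when $\sE_{D^+}$ is $\Q$-rational \emph{as an element of the relative homology with the $i\lambda^+$-coordinate included}, and since the $\lambda^+$-coordinate of $\sE_{D^+}$ is $\frac{1}{2\pi i}\sI_{D^+}$, rationality there together with the fact that $\sI_{D^+}=\lambda^+(v)$ with $v$ a difference of scattering constants times $\pi$ (Proposition \ref{scatteringperiods}) — hence transcendental unless it vanishes — forces $\sI_{D^+}=0$. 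So the clean statement is: $D^+$ torsion $\iff$ $\sR_{D^+}$ rational \emph{and} $\sI_{D^+}=0$, matching the analogy with $e^{2\pi i z}$ torsion in $\C^\times$ (real part of $z$ rational, imaginary part zero) noted after Theorem \ref{Manin-Drinfeld}.

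The main obstacle I expect is the careful bookkeeping of the two lattices and the two "imaginary direction" spaces: one must check that $\HH_1(X_{\Ga}-\partial_{\Ga}^{-}; \Z)$ as a sublattice of $\HH_1(X_{\Ga}-\partial_{\Ga}^{-}, \partial_{\Ga}^{+}; \Z)$ interacts correctly with the $\lambda^+$-direction, so that "torsion in $J_\Ga^-$" really does translate into "$\Q$-rational modulo the $i\lambda^+(\R)$-subspace", and then separately that the surviving freedom in the $\lambda^+$-direction is killed precisely by the condition $\sI_{D^+}=0$ — i.e. that one cannot have $D^+$ torsion with $\sI_{D^+}\neq 0$. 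This last point is where transcendence (via the identification of $\sI_{D^+}$ with scattering constants, and K\"uhn's/Waldschmidt-type results, cf. Theorem \ref{Scholl}) is implicitly needed; if one wants a self-contained argument one instead observes that the image in the genuine generalized Jacobian $J_\Ga^-=E/L$ only sees $\sE_{D^+}$ modulo $\HH_1(X_{\Ga}-\partial_{\Ga}^{-};\Z)$, so torsion is equivalent to $\sE_{D^+}\in \HH_1(X_{\Ga}-\partial_{\Ga}^{-};\Q)+\bigl(\text{the full }i\lambda^+(\HH_0(\partial_\Ga^-;\R))\bigr)$, and the projection of this condition onto the relative classes $\HH_1(X_{\Ga}-\partial_{\Ga}^{-}, \partial_{\Ga}^{+};\R)$ gives rationality of $\sR_{D^+}$, while the $i\lambda^+$-component of $\sE_{D^+}$ must equal its projection to $i\lambda^+(\HH_0(\partial_\Ga^-;\R))$, i.e. $\frac{1}{2\pi i}\sI_{D^+}$, lying in a \emph{rational} translate only if $\sI_{D^+}=0$ once one remembers that $\lambda^+(\HH_0(\partial_\Ga^-;\Q))$ is exactly the rational part available — the difference of the two descriptions being whether one tracks the $\partial_\Ga^+$-boundary or not. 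The assertion for $D^-$ is proved by the same argument with $\lambda^+,\partial_\Ga^{\pm}$ replaced by $\lambda^-,\partial_\Ga^{\mp}$.
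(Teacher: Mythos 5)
There is a genuine error in your reduction, and it propagates through the rest of the argument. You assert that $D^+$ is torsion in $J_\Ga^-$ if and only if some multiple $N\sE_{D^+}$ lies in $\HH_1(X_{\Ga}-\partial_{\Ga}^{-}, \partial_{\Ga}^{+}; \Z)+i\lambda^+(\HH_0(\partial_\Ga^-;\R))$. This is not what Proposition \ref{generalized Jacobian} gives: the generalized Jacobian is the quotient of $\HH_1(X_{\Ga}-\partial_{\Ga}^{-}, \partial_{\Ga}^{+}; \R)+i\lambda^+(\HH_0(\partial_\Ga^-;\R))$ by the lattice $\HH_1(X_{\Ga}-\partial_{\Ga}^{-}, \partial_{\Ga}^{+}; \Z)$ \emph{alone}; the summand $i\lambda^+(\HH_0(\partial_\Ga^-;\R))$ is part of the ambient group, not of what is quotiented out. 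Since that lattice sits entirely inside the real summand, the quotient is (real torus) $\times$ (real vector space in the $i\lambda^+$-direction), and the second factor is torsion-free. Hence torsion of the class of $-\sE_{D^+}$ means precisely $N\sE_{D^+}\in \HH_1(X_{\Ga}-\partial_{\Ga}^{-}, \partial_{\Ga}^{+}; \Z)$ for some $N\neq 0$; since this lattice has real coefficients, the purely imaginary component $\frac{N}{2\pi i}\sI_{D^+}$ must vanish and $N\sR_{D^+}$ must be integral. That is the whole proof (and is essentially what the paper does): $\sE_{D^+}\in \HH_1(X_{\Ga}-\partial_{\Ga}^{-}, \partial_{\Ga}^{+}; \Q)$ if and only if $\sR_{D^+}$ is rational and $\sI_{D^+}=0$. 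The same misidentification recurs in your final paragraph, where you again allow torsion to mean membership in $L+i\lambda^+(\HH_0(\partial_\Ga^-;\R))$.

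Because of this misstep you manufacture a gap that is not there, and the patch you propose to close it does not work: you appeal to the "transcendence" of $\pi$ times differences of scattering constants (via Proposition \ref{scatteringperiods}) to force $\sI_{D^+}=0$. No such transcendence statement is proved anywhere in the paper (or, to my knowledge, in the literature), and none is needed. The vanishing of $\sI_{D^+}$ for torsion divisors is a purely group-theoretic consequence of the lattice being contained in the real part of the period space; it is the exact analogue of the remark after Theorem \ref{Manin-Drinfeld} that $e^{2\pi i z}$ is torsion in $\C^{\times}$ only if the imaginary part of $z$ is $0$ — a fact one does not prove by showing the imaginary part is transcendental. Replace the faulty reduction and delete the transcendence detour, and the proof becomes the two-line argument above.
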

\begin{proof}
It follows from proposition {generalizedjacobian} that $D^+$ is torsion if and only if $\sE_{D^+}\in \HH_1(X_{\Ga}-\partial_{\Ga}^{-}, \partial_{\Ga}^{+}; \Q)$ which translates immediately into $\sR_{D^+}\in \HH_1(X_{\Ga}-\partial_{\Ga}^{-}, \partial_{\Ga}^{+}; \Q)$ and $\sI_{D^+}=0$. 
The alternate statement is proved similarly.
\end{proof}

The second part of  theorem \ref{Manin-Drinfeld} can be deduced from the first part. Indeed, it follows from the fact that the image of $\lambda^+$ (resp. $\lambda^-$) is the kernel of the map $\HH_1(X_{\Ga}-\partial_{\Ga}^{-}, \partial_{\Ga}^{+}; \Z)\rightarrow \HH_1(X_{\Ga}, \partial_{\Ga}^{+}; \Z)$ 
(resp. $\HH_1(X_{\Ga}-\partial_{\Ga}^{+}, \partial_{\Ga}^{-}; \Z)\rightarrow \HH_1(X_{\Ga}, \partial_{\Ga}^{-}; \Z)$). 

\section{Passage to any modular curve }
\label{GeneralEisensteinclasses}

Our purpose in this section is to derive from Theorem~\ref{Eisensteincycles}  all Eisenstein classes in $\HH_1(X_{\Ga},\partial_{\Ga};\R)$, where $\Ga$ is still a subgroup of $\Ga(2)$. In fact we will need to apply Theorem~\ref{Eisensteincycles} for several conjugates of $\Ga$ in the full modular group.

Recall that Manin \cite{MR0314846} has defined map $\xi$ : $\Z[\Ga\backslash\SL_2(\Z)]\rightarrow \HH_1(X_{\Ga},\partial_{\Ga};\Z)$, which to $[\Ga g]$ associates the class $\{g0,g\infty\}$ of the image in $X_\Ga$ of the path from $g0$ to $g\infty$ in the upper half-plane. Such a map is surjective and its kernel is fully described by Manin.

Denote by $U=\left(\begin{smallmatrix}
0 & 1\\
-1 & 1\\
\end{smallmatrix}\right)$ and $S=\left(\begin{smallmatrix}
0 & 1\\
-1 & 0\\
\end{smallmatrix}\right)$. The element $U$ permutes cyclicly $0$, $1$ and $\infty$. Denote by $\partial^{0}$, $\partial^{1}$ and $\partial^{\infty}$ the cusps of $X_{\Ga}$ above the cusps $\Ga(2)0$, $\Ga(2)1$ and $\Ga(2)\infty$ respectively. Let $D$ be a divisor of degree $0$ supported on $\partial_{\Ga}$. Write 
\[
D=-E_{0}+E_{\infty}
\] 
where $E_{0}$ and $E_{\infty}$ are divisors of degree $0$ supported on $\partial^{\infty}\cup \partial^{1}$ and $\partial^{0}\cup \partial^{1}$ respectively. This decomposition is well-defined up to a divisor of degree $0$ supported on $\partial^{1}$.

When $D$ is supported on $\partial_\Ga^+$ (resp. $\partial_\Ga^0$),
we extend the function $F_D$ to a map still denoted $F_D$ : $\SL_2(\Z)\rightarrow\C$ by $F(g)=0$ whenever $g\notin\Ga$.
\begin{prop}
\label{ExplicitEisenstein}
The element 
\[
\sE'_D=\sum_{g\in \Ga\backslash\SL_2(\Z)}(F_{U^{-1}E_\infty}(U^{-1}g)-F_{UE_0}(Ug))\xi(g)
\]
belongs to $\HH_1(X_{\Ga},\partial(X_{\Ga});\R)$ and is the Eisenstein class of boundary $D$. 
\end{prop}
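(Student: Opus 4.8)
The plan is to reduce Proposition~\ref{ExplicitEisenstein} to Theorem~\ref{Eisensteincycles} applied to the decomposition $D=-E_0+E_\infty$ and to two conjugates of $\Ga$ inside $\SL_2(\Z)$. First I would observe that $\sE'_D$ is characterized by two properties: its boundary is $D$, and $\int_{\sE'_D}\omega=0$ for every holomorphic $\omega\in\HH^0(X_\Ga,\Omega^1)$; by the uniqueness statement for Eisenstein classes in $\HH_1(X_\Ga,\partial_\Ga;\R)$ recalled in the introduction, it suffices to check these two properties. For the boundary computation I would use Manin's description of $\delta\circ\xi$: the boundary of $\xi(g)=\{g0,g\infty\}$ is $(\Ga g\infty)-(\Ga g0)$, so $\delta(\sE'_D)=\sum_g(F_{U^{-1}E_\infty}(U^{-1}g)-F_{UE_0}(Ug))\big((\Ga g\infty)-(\Ga g0)\big)$. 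Reindexing via $g\mapsto Ug$ and $g\mapsto U^{-1}g$ and using that $U$ cyclically permutes $0,1,\infty$, this should split as a sum of two boundary contributions, each of which — after identifying $\xi$ on $\Ga\backslash\SL_2(\Z)$ with the $\xi^+$-type maps of section~\ref{MC} for the relevant conjugate group — matches the boundary of $\sE_{E_\infty}$ (resp.\ $\sE_{E_0}$) computed in Theorem~\ref{Eisensteincycles}, namely $E_\infty$ (resp.\ $E_0$), giving $\delta(\sE'_D)=E_\infty-(-E_0)$? — here I must be careful with signs, but the target is $D=-E_0+E_\infty$, and the signs in the definition of $\sE'_D$ are chosen precisely to make this work.

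Next, for the vanishing of holomorphic periods, the key point is that a holomorphic differential on $X_\Ga$ is in particular a differential of the third kind with no poles, hence (trivially) with poles contained in $\partial^1$, or in $\partial^0\cup\partial^1$, or in $\partial^\infty\cup\partial^1$. I would write $\sE'_D$ as a combination of the images in $\HH_1(X_\Ga,\partial_\Ga;\R)$ of $\sR$-type classes coming from Theorem~\ref{Eisensteincycles} for two conjugates $\Ga'=S'\Ga S'^{-1}$ of $\Ga$ chosen so that $\partial_{\Ga'}^-$ corresponds, under the isomorphism $X_{\Ga'}\cong X_\Ga$, to $\partial^1\cup\partial^\infty$ in one case and to $\partial^1\cup\partial^0$ in the other. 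Concretely: applying Theorem~\ref{Eisensteincycles} to $\Ga$ conjugated by $U$ handles $E_0$ (a divisor supported on $\partial^0\cup\partial^1$, which becomes a ``$+$''-type divisor for the conjugate), and conjugating by $U^{-1}$ handles $E_\infty$; the functions $F_{UE_0}$ and $F_{U^{-1}E_\infty}$ that appear are exactly the functions $F_{D^+}$ of the introduction for these conjugate groups, evaluated along the translated paths, which is why $U^{\pm1}g$ appears as the argument. Since for each conjugate the relevant $\omega$ (holomorphic on $X_\Ga$) is a differential of the third kind whose poles avoid the corresponding $\partial_\Ga^-$, Theorem~\ref{Eisensteincycles} gives $\int_{\sR}\omega=0$; summing the two contributions yields $\int_{\sE'_D}\omega=0$.

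Finally I would check the real-coefficient statement, i.e.\ that $\sE'_D\in\HH_1(X_\Ga,\partial_\Ga;\R)$ and not merely in the complex homology: this is immediate since each $F$ in the definition is real-valued precisely when we take the class $\sR_{D^+}$ (the real part) rather than $\sE_{D^+}$, and the imaginary parts $\sI_{D^\pm}$ lie in the image of $\lambda^\pm$, which maps to $0$ in $\HH_1(X_\Ga,\partial_\Ga;\Z)$ by the description of the kernel in section~\ref{MC}; hence passing to $\HH_1(X_\Ga,\partial_\Ga;\R)$ kills the imaginary contributions and leaves a well-defined real class. I expect the main obstacle to be \emph{bookkeeping of the identifications}: matching Manin's $\xi$ on $\Ga\backslash\SL_2(\Z)$ with the $\xi^\pm$ on $\Ga'\backslash\Ga(2)$ for the conjugate groups $\Ga'=U^{\pm1}\Ga U^{\mp1}$, tracking how $\partial^0,\partial^1,\partial^\infty$ and the cusps $0,1,\infty$ of $X(2)$ get permuted, and getting every sign right — in particular reconciling the $-E_0+E_\infty$ decomposition (well-defined only up to a degree-zero divisor supported on $\partial^1$) with the fact that the ambiguity does not affect $\sE'_D$ because such a divisor contributes a class already killed in relative homology. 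The analytic input (vanishing of holomorphic periods) is essentially free once the reduction is set up; the combinatorial translation is where the care is needed.
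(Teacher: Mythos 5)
Your proposal follows essentially the same route as the paper's proof: decompose $D=-E_0+E_\infty$, conjugate by $U^{\pm1}$ so that $UE_0$ and $U^{-1}E_\infty$ become divisors supported on $\partial^+$ for the conjugate groups, apply Theorem \ref{Eisensteincycles} there, observe that the imaginary parts lie in the image of $\lambda^{\pm}$ and hence die in $\HH_1(X_\Ga,\partial_\Ga;\R)$, and conclude by a change of variables in the sums. The only slip is a swapped label (the paper takes $E_0$ supported on $\partial^\infty\cup\partial^1$ and $E_\infty$ on $\partial^0\cup\partial^1$, consistent with $U:0\mapsto1\mapsto\infty\mapsto0$), which is exactly the bookkeeping you flagged and does not affect the argument.
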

\begin{proof}
Note first that $UE_{0}$ and $U^{-1}E_{\infty}$ are divisors of degree $0$ supported on $\partial_{U\Ga U^{-1}}^{+}\subset X_{U\Ga U^{-1}}$ and $\partial_{U^{-1}\Ga U}^{+}\subset X_{U^{-1}\Ga U}$ respectively.
Consider the corresponding Eisenstein elements $\sE_{UE_{0}}$ and  $\sE_{U^{-1}E_{\infty}}$ and their respective images $\sE'_{UE_{0}}$ and  $\sE'_{U^{-1}E_{\infty}}$ in $\HH_1(X_{U\Ga U^{-1}},\partial_{U\Ga U^{-1}},\C)$ and $\HH_1(X_{U^{-1}\Ga U},\partial_{U^{-1}\Ga U},\C)$. Those images have in fact real coefficients, see section \ref{Eisensteincycles}.
Then $U^{-1}\sE'_{UE_{0}}$ and $U\sE_{U^{-1}E'_{\infty}}$ are Eisenstein elements in $\HH_1(X_{\Ga},\partial_\Ga;\R)$ of boundary $E_{0}$ and $E_{\infty}$ respectively. 

For $g\in \Ga(2)$, the image of $\xi^+(g)$ via the composition of maps
\[
\HH_1(X_{\Ga}-\partial_{\Ga}^{-}, \partial_{\Ga}^{+}, \Z)\rightarrow \HH_1(X_{\Ga}, \partial_{\Ga}^{+}, \Z)\rightarrow \HH_1(X_{\Ga}, \partial_{\Ga}, \Z)
\]
is $\xi(g)$. Thus, one has
\[
\sE'_{U^{-1}E_{\infty}}=\sum_{g\in U^{-1}\Ga U\backslash\Ga(2)}F_{U^{-1}E_{\infty}}(g)\xi(g)\hskip1truecm\text{and}\hskip1truecm \sE'_{UE_{0}}=\sum_{g\in U\Ga U^{-1}\backslash\Ga(2)}F_{UE_{0}}(g)\xi(g).
\]
Those Eisenstein elements have boundary $U^{-1}E_\infty$ and $UE_0$ respectively. Thus $U\sE_{U^{-1}E_{\infty}}-U^{-1}\sE_{UE_{0}}$ is an Eisenstein elements of boundary $E_\infty-E_0=D$. It is precisely $\sE'_D$, as can be seen by changing variable in the sums. 

\end{proof}

When $\Gamma'$ is a subgroup of ${\rm SL}_2(\Z)$ containing $\Gamma$, consider the degeneracy map $\pi$ : $X_\Ga\rightarrow X_{\Ga'}$. It defines a map $\pi_*$ : $\HH_1(X_{\Ga},\partial(X_{\Ga});\R)\rightarrow \HH_1(X_{\Ga'},\partial(X_{\Ga});\R)$, which is compatible with the boundary maps and respects Eisenstein elements. Thus proposition \ref{ExplicitEisenstein} provides a formula for all Eisenstein elements for modular curves associated to any subgroup of finite index of ${\rm SL}_2(\Z)$.
The formula is unsatisfactory insofar as the decomposition $D=E_\infty-E_0$ is not unique.

Note a dual statement using $\xi^-$ instead of $\xi^+$ is missing. We make no use of $\xi^-$ in proposition \ref{ExplicitEisenstein}.

\begin{cor}
\label{torsioncriterion}
The class of the divisor $D$ is torsion in $J_\Ga$ if and only if there exists $F'$, $F''$ : $\Z[\Ga\backslash\SL_2(\Z)]\rightarrow\R$ which are right $S$-invariant and right $U$-invariant respectively such that for all $g\in {\rm SL}_2(\Z)$
\[
(F_{U^{-1}E_\infty}(U^{-1}g)-F_{UE_0}(Ug))-F'(g)+F''(g)\in \Q.
\]
\end{cor}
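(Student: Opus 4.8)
The plan is to combine Proposition \ref{ExplicitEisenstein} with the torsion criterion $\sE'_D\in\HH_1(X_\Ga,\partial_\Ga;\Q)$ and then translate the condition of lying in the rational homology into a condition on the coefficient function modulo the kernel of Manin's map $\xi$. First I would invoke the fact, recalled in the introduction, that $D$ is torsion in $J_\Ga$ if and only if the Eisenstein class $\sE_D\in\HH_1(X_\Ga,\partial_\Ga;\R)$ actually lies in $\HH_1(X_\Ga,\partial_\Ga;\Q)$. By Proposition \ref{ExplicitEisenstein}, $\sE_D$ is represented by $\sE'_D=\sum_{g\in\Ga\backslash\SL_2(\Z)}\bigl(F_{U^{-1}E_\infty}(U^{-1}g)-F_{UE_0}(Ug)\bigr)\xi(g)$, so $D$ is torsion if and only if this class is rational.

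Next I would use Manin's description of $\xi$: the map $\xi\colon\Z[\Ga\backslash\SL_2(\Z)]\to\HH_1(X_\Ga,\partial_\Ga;\Z)$ is surjective, and after tensoring with $\R$ its kernel is spanned by the elements $[\Ga g]+[\Ga gS]$ and $[\Ga g]+[\Ga gU]+[\Ga gU^2]$ (the Manin $2$-term and $3$-term relations). Therefore an element $\sum_g a_g\xi(g)$ of $\HH_1(X_\Ga,\partial_\Ga;\R)$ lies in $\HH_1(X_\Ga,\partial_\Ga;\Q)$ if and only if the real-valued function $g\mapsto a_g$ agrees, modulo $\Q$, with a $\Q$-linear combination of the generators of $\ker\xi\otimes\R$ — equivalently, if and only if there are functions $F',F''\colon\Z[\Ga\backslash\SL_2(\Z)]\to\R$ that are right $S$-invariant and right $U$-invariant respectively (so that each lies in the $\R$-span of the $S$-relations and the $U$-relations) with $a_g-F'(g)+F''(g)\in\Q$ for all $g$. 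Applying this with $a_g=F_{U^{-1}E_\infty}(U^{-1}g)-F_{UE_0}(Ug)$ yields exactly the stated criterion. One should check the sign/normalization convention in the $S$-relation (whether one wants $F'(gS)=F'(g)$ or $F'(gS)=-F'(g)$): since $\xi(g)+\xi(gS)=0$ in homology, a right $S$-invariant $F'$ contributes $2F'(g)\xi(g)$ which is killed in the rational quotient only once we also allow the $\Q$-ambiguity, so the formulation in the statement is the correct one; similarly the $3$-term relation forces the $U$-invariance of $F''$.

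The main obstacle is the homological bookkeeping in the second step: one must verify carefully that ``lies in $\HH_1(X_\Ga,\partial_\Ga;\Q)$'' is \emph{equivalent} to the existence of such $F'$ and $F''$, not merely implied by it. The nontrivial direction is that if $\sum_g a_g\xi(g)\in\HH_1(X_\Ga,\partial_\Ga;\Q)$ then one can choose a preimage under $\xi$ of a rational class and subtract it from $(a_g)_g$ to land in $\ker(\xi\otimes\R)$; since $\ker(\xi\otimes\R)$ is spanned by the $S$-relations and $U$-relations, the difference decomposes as $F'-F''$ with $F'$ right $S$-invariant and $F''$ right $U$-invariant, using that the span of the $S$-relations is precisely the space of right-$S$-invariant functions (up to the two-dimensional subtlety at the elliptic points, which is harmless over $\R$) and similarly for $U$. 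This is a standard but slightly delicate unwinding of Manin's presentation of $\HH_1(X_\Ga,\partial_\Ga;\Z)$; once it is in place, the corollary is immediate from Proposition \ref{ExplicitEisenstein} and the rationality criterion for torsion.
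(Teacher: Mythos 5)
Your proposal is correct and follows essentially the same route as the paper: reduce torsion of $D$ to rationality of $\sE'_D$ via Proposition \ref{ExplicitEisenstein}, then use Manin's description of $\ker\xi$ over $\R$ as the span of the right-$S$-invariant and right-$U$-invariant functions. The paper's proof is just a terser version of this; your extra care about the elliptic-point orbits and the equivalence (not mere implication) is a reasonable filling-in of the same argument.
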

\begin{proof}
Indeed, the class of the divisor $D$ is torsion if and only if the Eisenstein element $\sE_D$ belongs to  $\HH_1(X_{\Ga},\partial(X_{\Ga});\Q)$. 
The kernel of Manin's map $\xi$ is spanned by the sum of elements invariant by $U$ and elements invariant by $S$ \cite{MR0314846}. The corollary follows. 

\end{proof}

\section{Fermat curves}
\label{Fermat}

Let $N$ be an integer $>0$. The  $N$-th Fermat curve $F_{N}$ is given by the projective equation:
\[
X^N+Y^N=Z^N.
\]
Fermat curves and their points at infinity (cusps) are studied extensively by Rohrlich \cite{MR2626317}, \cite{MR0441978}, V\'elu \cite{MR582434} and  Posingies~\cite{Posingies}. We use the notations of the latter author.
In particular, we have the Belyi map
\[
\beta_N:F_N \rightarrow \sP^1
\]
given by $(X:Y:Z) \rightarrow (X^N:Z^N)$. The map $\beta_N$ satisfy the following properties:
\begin{itemize}
\item It is of degree $N^{2}$.
\item
It is ramified only above the points $0,1,\infty$. 
\item 
The corresponding ramification points are given by $a_j=(0:\zeta^j:1)$, $b_j=(\zeta^j:0:1)$, $c_j=(\epsilon\zeta^j:1:0)$, for $j\in\Z/N\Z$.
\end{itemize}
Here, $\zeta=e^{2\pi i /N}$is the primitive $N$ th root of unity and $\epsilon=e^{\pi i /N}$. Each of the 
above points has ramification index $N$ over $\sP^1$.

Since the $\lambda$ function identifies $\sP^1-\{0,1,\infty\}$ to $Y(2)$.
A covering of $\sP^1-\{0,1,\infty\}$ can be understood as a covering of $Y(2)$, {\it i.e.} a modular curve.
Consider the group morphism $P\Ga(2)\rightarrow (\Z/N\Z)^{2}$ which sends $A$ to $(1,0)$ and sends $B$ to $(0,1)$. 
An explicit formula for this morphism in terms of the entries of the matrix (and not in terms of its decomposition as product of generators) is due to Murty and Ramakrishnan \cite{MR983619}.
Denote by $\Phi_{N}$ its kernel, which is a subgroup of index $N^{2}$ of $P\Ga(2)$. Denote by $X_{\Phi(N)}$ the corresponding modular curve.

A system of representatives for the cosets $\Phi_N \backslash \Ga(2)$ is given by $A^a B^b$ with $a,b \in \{0,1...(N-1)\}$.  

The dessin d'enfant of the Fermat curve enjoys a special combinatorial property: its edges are in a one-to-one correspondence with the pairs of opposite color vertices. 
\begin{prop}
\label{coordinates}
The map $ \Phi_N \backslash \Ga(2)\rightarrow \Phi_N\backslash\Ga(2)0\times \Phi_N\backslash\Ga(2)\infty$ which to $\Phi_Ng$ associates $(\Phi_Ng0,\Phi_Ng\infty)$ is bijective.
\end{prop}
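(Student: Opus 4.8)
The plan is to exhibit an explicit inverse to the map in question, using the combinatorial description of $\Ga(2)$ and the cusp-groups $\Phi_N$. First I would record the three stabilizer data: in $P\Ga(2)$, the stabilizer of $\infty$ is generated by $A$, the stabilizer of $0$ is generated by $B$, and the stabilizer of $1$ is generated by $BA^{-1}$ (as recalled in section \ref{MC}). Passing to $\Phi_N\backslash\Ga(2)$, a coset $\Phi_N g$ is uniquely written $\Phi_N A^aB^b$ with $a,b\in\{0,1,\dots,N-1\}$, by the stated system of representatives. For such a representative, $g\infty=A^aB^b\infty$ and $g0=A^aB^b0$; the point is to check that the pair $(\Phi_N g0,\Phi_N g\infty)$ determines $(a,b)$, and conversely that every pair in the target arises. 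Since $\Phi_N\backslash\Ga(2)\infty$ (the set of cusps above $\infty$) has exactly $N$ elements — the $\infty$-cusp of $X(2)$ has width $1$ in $X(2)$, and in $X_{\Phi_N}$ it splits according to the image of $A$ in $(\Z/N\Z)^2$, which is $(1,0)$ of order $N$ — and similarly $\Phi_N\backslash\Ga(2)0$ has $N$ elements, both source and target are finite sets of cardinality $N^2$, so it suffices to prove injectivity (equivalently surjectivity).

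For injectivity I would argue as follows. The morphism $\psi\colon P\Ga(2)\to(\Z/N\Z)^2$ sending $A\mapsto(1,0)$, $B\mapsto(0,1)$ has kernel $\Phi_N$, so two elements of $\Ga(2)$ lie in the same $\Phi_N$-coset iff they have the same $\psi$-value. Now $\Phi_N\backslash\Ga(2)\infty$ is identified with the set of left cosets $\Phi_N\backslash\Ga(2)/\langle A\rangle$, and since $\langle A\rangle$ maps onto the subgroup $\langle(1,0)\rangle$ of $(\Z/N\Z)^2$, the coset $\Phi_N g\langle A\rangle$ is determined by $\psi(g)$ modulo $\langle(1,0)\rangle$, i.e.\ by the second coordinate $\psi_2(g)$. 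Likewise $\Phi_N g0$ is identified with $\Phi_N g\langle B\rangle$, determined by $\psi(g)$ modulo $\langle(0,1)\rangle$, i.e.\ by the first coordinate $\psi_1(g)$. Hence the map $\Phi_N g\mapsto(\Phi_N g0,\Phi_N g\infty)$ corresponds under these identifications to $\Phi_N g\mapsto(\psi_1(g),\psi_2(g))=\psi(g)$, which is a bijection $\Phi_N\backslash\Ga(2)\xrightarrow{\sim}(\Z/N\Z)^2$ precisely because $\Phi_N=\ker\psi$. This proves the proposition.

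I would then remark that this is exactly the ``special combinatorial property'' of the Fermat dessin noted just before the statement: the two projections to the $0$-cusps and the $\infty$-cusps separately forget the two coordinates of $\psi$, and together they recover $\psi$ — so each edge of the dessin is pinned down by its pair of endpoints. The main (and only real) obstacle is bookkeeping: one must be careful that the identification of $\Phi_N\backslash\Ga(2)\infty$ with double cosets $\Phi_N\backslash\Ga(2)/\langle A\rangle$ is set up correctly (the cusp $\Ga(2)\infty$ has width $1$ in $X(2)$, so its stabilizer in $P\Ga(2)$ is exactly $\langle A\rangle$, with no accidental identifications), and likewise for $0$; once that is in place the argument is the short computation above. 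Alternatively, and perhaps more transparently, one can simply use the explicit representatives $A^aB^b$: compute $\psi(A^aB^b)=(a,b)$, note $\Phi_N A^aB^b\infty=\Phi_N A^aB^b\infty$ depends only on $b\bmod N$ and $\Phi_N A^aB^b0$ depends only on $a\bmod N$ (since $A$ fixes $\infty$ and $B$ fixes $0$), and conclude that the pair recovers $(a,b)$, hence the coset.
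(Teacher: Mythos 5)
Your proof is correct and follows essentially the same route as the paper's: both reduce to injectivity by observing that source and target each have $N^2$ elements, and both conclude using that $\Phi_N$ is the kernel of the map $P\Ga(2)\to(\Z/N\Z)^2$, so that the exponents $a,b$ relating two cosets with the same cusp pair must lie in $N\Z$. Your presentation via the explicit identification of the map with $\psi=(\psi_1,\psi_2)$ is just a more systematic phrasing of the paper's coset manipulation (and the aside about the width of $\Ga(2)\infty$ in $X(2)$ is immaterial, since all that is used is that the stabilizer of $\infty$ in $P\Ga(2)$ is exactly $\langle A\rangle$).
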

\begin{proof}
All sets involved have cardinality $N^2$. It is sufficient to show that the map is injective. For the first statement, let $g,g'\in \Ga(2)$ such that $(\Phi_Ng0,\Phi_Ng\infty)=(\Phi_Ng'0,\Phi_Ng'\infty)$. There exists $a$, $b\in\Z$ such that $\Phi_Ng=\Phi_Ng'A^a$ and $\Phi_Ng=\Phi_Ng'B^b$. 
Thus one has $\Phi_NgA^{-a}=\Phi_NgB^{-b}$. Hence $a$ and $b$ belong to $N\Z$. Thus $\Phi_Ng=\Phi_Ng'$.

\end{proof}

Thus the corresponding dessin d'enfant admits as $0$-vertices the cusps $\Phi_{N}A^{a}0$, for $a\in\{0,1,..., N-1\}$, as $\infty$-vertices the cusps $\Phi_{N}B^{b}\infty$,  for $b\in\{0,1,..., N-1\}$, as edges the pairs $\{\Phi_{N}A^{a}0,\Phi_{N}B^{b}\infty\}$ for $a$, $b\in\{0,1,..., N-1\}$. The cyclic ordering ({\it i.e.} action of $\Z$) on the edges attached to $\Phi_{N}A^{a}0$ (resp. $\Phi_{N}B^{b}\infty$) is such that the successor of  $\{\Phi_{N}A^{a}0,\Phi_{N}B^{b}\infty\}$ is $\{\Phi_{N}A^{a+1}0,\Phi_{N}B^{b}\infty\}$ (resp. $\{\Phi_{N}A^{a}0,\Phi_{N}B^{b-1}\infty\}$).

The $N$-th roots
\[
x:=\sqrt[N]{\lambda}, y:=\sqrt[N]{1-\lambda}
\]
define modular units for $\Phi_{N}$. We recover thus the familiar model of the Fermat curve.

The cusps $a_j,b_j$ and $c_j$ have been introduced above.  The divisors of the following modular functions 
are given by:
\[
{\rm div}(x-\zeta^j)=N b_j-\sum_j c_j, 
{\rm div}(y-\zeta^j)=N a_j-\sum_j c_j, 
{\rm div}(x-\epsilon\xi^j y)=N c_j-\sum_j c_j. 
\]

It follows that the cuspidal subgroup of  $J_{\Phi_N}$ is annihilated by $N$. 
Suppose from now that $N$ is an odd integer. 
Rohrlich  \cite{MR0441978} has determined the structure of the cuspidal group of $J_{\Phi_N}$ (we use V\'elu's alternative proof and description \cite{MR582434} ). 
\begin{thm}{(Rohrlich, ~\cite{MR0441978}, \cite{MR582434})}
\label{Rohrlich}
The group of principal divisors of $X_{\Phi(N)}$ is spanned by  by $N[\partial_{\Phi_{N}}]^{0}$ together with the following set (the {\rm Rohrlich relations})
$$
\{\sum_{i=0}^{N-1}[a_{i}]-[P],\sum_{i=0}^{N-1}[b_{i}]-[P],\sum_{i=0}^{N-1}[c_{i}]-[P],
$$
$$
\sum_{i=0}^{N-1}i([a_{i}]-[b_{i}]),\sum_{i=0}^{N-1}i([a_{i}]-[c_{i}]),\sum_{i=0}^{N-1}i^{2}([a_{i}]+[b_{i}]+[c_{i}]-3[P])\},
$$
where $P$ is any cusp of $X_{\Phi_N}$. Consequently the cuspidal subgroup of $J_{\Phi_N}$ is isomorphic to $(\Z/N\Z)^{3N-7}$.
\end{thm}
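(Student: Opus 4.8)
By definition the cuspidal subgroup of $J_{\Phi_N}$ is the image of $\mathrm{Div}^0(\partial_{\Phi_N})$, the group of degree-zero divisors supported on the $3N$ cusps $a_i,b_i,c_i$; it is isomorphic to $\mathrm{Div}^0(\partial_{\Phi_N})/P$, where $P$ is the group of principal divisors of $X_{\Phi_N}$ supported on $\partial_{\Phi_N}$, equivalently the group of divisors of modular units $\mathcal{O}(Y_{\Phi_N})^\times/\C^\times$. As $\mathrm{Div}^0(\partial_{\Phi_N})$ is free of rank $3N-1$, the theorem is exactly the assertion that $P=\langle\,N\cdot\mathrm{Div}^0(\partial_{\Phi_N}),\ R_1,\dots,R_6\,\rangle$, the $R_i$ being the six Rohrlich relations (in their degree-zero form), together with the computation that the classes of the $R_i$ in $\mathrm{Div}^0(\partial_{\Phi_N})/N\cdot\mathrm{Div}^0(\partial_{\Phi_N})\cong(\Z/N\Z)^{3N-1}$ are linearly independent, whence the quotient is $(\Z/N\Z)^{3N-7}$.

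\textbf{The elementary half.} First, $N\cdot\mathrm{Div}^0(\partial_{\Phi_N})\subseteq P$: by the divisor formulas recalled just above, for any two cusps $v,w$ the divisor $N([v]-[w])$ is the divisor of an explicit quotient of two of the functions $x-\zeta^i$, $y-\zeta^i$, $x-\epsilon\zeta^i y$ (this is the observation, already made above, that the cuspidal subgroup is killed by $N$). Next, the three units $x$, $y$ and $x-\epsilon\zeta^0 y$ have divisors $\sum_i[a_i]-\sum_i[c_i]$, $\sum_i[b_i]-\sum_i[c_i]$ and $N[c_0]-\sum_j[c_j]$; combined with the inclusion just proved, these give the first three Rohrlich relations, i.e. that $\sum_i[a_i]-N[P]$, $\sum_i[b_i]-N[P]$, $\sum_i[c_i]-N[P]$ are principal, and modulo $N\cdot\mathrm{Div}^0(\partial_{\Phi_N})$ their classes are the indicator vectors of the three blocks $\{a_i\}$, $\{b_i\}$, $\{c_i\}$, hence independent. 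At this stage one knows the cuspidal subgroup is a quotient of $(\Z/N\Z)^{3N-4}$.

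\textbf{The substantive part.} The hard part will be to establish the three weighted Rohrlich relations $\sum_i i([a_i]-[b_i])$, $\sum_i i([a_i]-[c_i])$, $\sum_i i^2([a_i]+[b_i]+[c_i]-3[P])$ \emph{and} the statement that there are no further relations; these are genuine facts about $J_{\Phi_N}$, not formal consequences of the explicit units — indeed a product of the functions $x-\zeta^i,\,y-\zeta^i,\,x-\epsilon\zeta^i y$ has divisor congruent, modulo $N\cdot\mathrm{Div}^0(\partial_{\Phi_N})$, to a multiple of $N[c_0]-\sum_j[c_j]$, so no further relation can be produced that way. Here I would follow Rohrlich: decompose $J_{\Phi_N}$ up to isogeny as $\prod_\chi A_\chi$ over the admissible characters $\chi=(r,s)$ of $\mu_N\times\mu_N$ (those with $r,s,r+s\not\equiv0\pmod N$), each $A_\chi$ carrying complex multiplication by $\Q(\zeta_N)$ and an explicit quotient map $X_{\Phi_N}\to A_\chi$; then, by a Jacobi-sum (Rohrlich--Gross) formula, compute the exact order of the image in $A_\chi(\overline{\Q})$ of each cuspidal class — a generalized Bernoulli number attached to $\chi$ — and assemble over all $\chi$, using that these characters separate cuspidal classes. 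This one computation yields simultaneously that the three weighted relations hold and that $[\mathrm{Div}^0(\partial_{\Phi_N}):P]=N^{3N-7}$, no larger, which forces the equality in the Setup. Alternatively one can substitute V\'elu's more elementary argument: analyse a hypothetical extra modular unit along the intermediate coverings $X_{\Phi_N}\to X_{\Phi_d}$ for $d\mid N$, reducing to $N$ prime and then to a direct valuation computation on the model $x^N+y^N=1$; this stays inside the combinatorial setting of the paper, so it is the route I would write out (or, as the authors do, simply cite).

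\textbf{Conclusion.} Combining the two halves gives $P=\langle N\cdot\mathrm{Div}^0(\partial_{\Phi_N}),\,R_1,\dots,R_6\rangle$ and $\mathrm{Div}^0(\partial_{\Phi_N})/P\cong(\Z/N\Z)^{3N-7}$. The only serious obstacle is the middle step: exhibiting relations is bookkeeping, but proving the list complete amounts to computing the exact order of the cuspidal divisor class group, which is the heart of Rohrlich's theorem.
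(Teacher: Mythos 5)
The paper does not actually prove this statement: it is quoted as Rohrlich's theorem (with V\'elu's alternative proof cited), exactly as you anticipate in your closing parenthetical, so there is no internal argument to compare against. Your outline of how the cited proofs go is accurate and consistent with those references --- the explicit units $x$, $y$, $x-\epsilon\zeta^{j}y$ and the $3N$ linear functions do give the $N$-torsion bound and the three ``block'' relations but, as you correctly observe, generate nothing further modulo $N\cdot\mathrm{Div}^0(\partial_{\Phi_N})$, so the three weighted relations and the completeness of the list require the genuinely arithmetic input (Rohrlich's CM/Jacobi-sum computation of the exact order, or V\'elu's descent through the intermediate coverings), which it is legitimate to cite here just as the authors do.
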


We turn now to the analogue of this result for the generalized  Jacobian. We start with the Eisenstein cycles. 

\begin{thm}
\label{periodcomputation}
Let $j$, $k\in{\Z}/N{\Z}$,  one has 
  \[
\sE_{(a_j)-(c_k)}= \frac{1}{N}(\sum_{g \in \Phi_N \backslash \Ga(2), g0=a_j}\{g0,g \infty\}^+-\sum_{g \in \Phi_N \backslash \Ga(2), g\infty=c_k}\{g0,g \infty\}^+)
  \]
  and 
 \begin{eqnarray*}
\sE_{(b_j)-(b_k)}&=& \frac{1}{2N}(\sum_{g \in \Phi_N \backslash \Ga(2), g1=b_j}\{g(1),g(-1)\}^--\sum_{g \in \Phi_N \backslash \Ga(2), g(-1)=b_j}\{g(1),g(-1)\}^-)\\
&&- \frac{1}{2N}(\sum_{g \in \Phi_N \backslash \Ga(2), g1=b_k}\{g(1),g(-1)\}^--\sum_{g \in \Phi_N \backslash \Ga(2), g(-1)=b_k}\{g(1),g(-1)\}^-).
  \end{eqnarray*}
  \end{thm}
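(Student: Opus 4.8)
The plan is to derive both formulas from Theorem~\ref{Eisensteincycles} together with the explicit determination of the functions $F_{D^+}$ and $F_{D^-}$ afforded by the special combinatorics of the Fermat dessin (Proposition~\ref{coordinates}). Recall that $\sE_{D^+}=\sum_{g\in\Phi_N\backslash\Ga(2)}F_{D^+}(g)\{g0,g\infty\}^+$, so everything reduces to computing $F_{D^+}(g)=\gS_{D^+}(g(-1))-\gS_{D^+}(g(1))$ for $D^+=(a_j)-(c_k)$, and likewise $F_{D^-}(g)$ for $D^-=(b_j)-(b_k)$. For the first identity, $a_j$ lies above $\Ga(2)0$ and $c_k$ above $\Ga(2)\infty$, so $D^+=(a_j)-(c_k)$ is indeed a degree-$0$ divisor supported on $\partial_{\Phi_N}^+$, and I expect $F_{(a_j)-(c_k)}(g)$ to take only the values $0,\tfrac1N,-\tfrac1N$ depending on whether $\Phi_N g(1)$, $\Phi_N g(-1)$ equal the relevant cusps; summing against $\{g0,g\infty\}^+$ will then collapse to the stated expression involving the conditions $g0=a_j$ and $g\infty=c_k$.

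First I would compute $\gS_{D^+}(x)$ explicitly for $D^+=(a_j)-(c_k)$. Using the coordinate bijection of Proposition~\ref{coordinates}, the double coset sums $\Phi_{N,j}\backslash\Ga(2)/\Phi_{N,j}$ appearing in the Kloosterman zeta functions $\phi_{jj,r}(s)$ become genuinely computable: the stabilizer of a cusp above $\Ga(2)0$ (resp.\ $\Ga(2)\infty$) is generated by a power of $B$ (resp.\ $A$), and one checks which $c$-entries occur and with what $d \pmod c$. The key point is that for the Fermat curve the relevant Kloosterman zeta functions are essentially elementary (this is exactly the content of Rohrlich's and V\'elu's explicit computations, and of Murty--Ramakrishnan's formula for the reduction map $P\Ga(2)\to(\Z/N\Z)^2$), so that $\gS_{(a_j)}(x)$ is a simple periodic function whose value at $x\in\Z$, or at a representative of a cusp, is a rational multiple of $1/N$. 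Then $F_{(a_j)-(c_k)}(g)$ is obtained by evaluating at $g(1)$ and $g(-1)$, and one reads off that the contribution of $g$ to $\sE_{D^+}$ is $+\tfrac1N$ precisely when $g0=a_j$ and $0$ otherwise from the $(a_j)$ part, and $-\tfrac1N$ precisely when $g\infty=c_k$ and $0$ otherwise from the $(c_k)$ part, using $A(-1)=B(-1)=1$ so that $\Phi_Ng(1)$ and $\Phi_Ng(-1)$ are governed by $\Phi_Ng0$ and $\Phi_Ng\infty$ via the dessin structure.

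For the second identity the divisor $D^-=(b_j)-(b_k)$ is supported on $\partial_{\Phi_N}^-$, so I would instead use $\sE_{D^-}=\sum_g F_{D^-}(g)\{g(1),g(-1)\}^-$ with $F_{D^-}(g)=\gS_{D^-}(g(\infty))-\gS_{D^-}(g(0))$. The same explicit computation of $\gS_{(b_j)}$ applies, the extra factor $\tfrac1{2N}$ versus $\tfrac1N$ reflecting that the cusp $\Ga(2)1$ has two preimages $g(1)$ and $g(-1)$ in each $\Ga(2)$-face (equivalently, the width/normalization of the cusp $1$ in $P\Ga(2)$), so that each summand is halved and both the $g1=b_j$ and $g(-1)=b_j$ conditions contribute. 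One then separates the $(b_j)$ and $(b_k)$ contributions to get the displayed difference of four sums. Throughout, the boundary statement of Theorem~\ref{Eisensteincycles}, namely that $\sE_{D^+}$ has boundary $D^+$ (resp.\ $\sE_{D^-}$ has boundary $D^-$), serves as a consistency check on the coefficients.

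The main obstacle will be the explicit evaluation of $\gS_{(a_j)}$, $\gS_{(b_j)}$, $\gS_{(c_j)}$ — that is, pinning down the Kloosterman zeta functions $\phi_{jj,r}(1)$ for the cusps of $X_{\Phi_N}$ and showing the resulting periodic function is the simple rational-valued function one expects. This is where the special structure of the Fermat dessin (Proposition~\ref{coordinates}, together with the Murty--Ramakrishnan description of the map to $(\Z/N\Z)^2$) must be used decisively; without it the Dirichlet series would be intractable, per the caveat in the introduction. Once that evaluation is in hand, assembling the two formulas is a bookkeeping exercise in reindexing the sum over $\Phi_N\backslash\Ga(2)$ by the values of $g0$ and $g\infty$, which is legitimate precisely because of the bijection in Proposition~\ref{coordinates}.
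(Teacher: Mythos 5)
There is a genuine gap, and it sits exactly where you placed your ``main obstacle.'' Your plan is to evaluate $\gS_{(a_j)}$, $\gS_{(b_j)}$, $\gS_{(c_j)}$ by computing the Kloosterman zeta functions $\phi_{jj,r}(1)$ directly from their Dirichlet-series definition, using Proposition~\ref{coordinates} and the Murty--Ramakrishnan description of the map $P\Ga(2)\to(\Z/N\Z)^2$. You never carry this out, and the paper explicitly warns (in the remark closing section~\ref{Fermat}) that this route is not viable: it is unclear how to characterize the elements of $\Phi_N$ by the entries of the corresponding matrices, the Murty--Ramakrishnan answer is in terms of Dedekind sums whose usefulness here is doubtful, and the authors state outright that they \emph{did not} use the Kloosterman sums for the Fermat curves. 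Your assertion that ``the relevant Kloosterman zeta functions are essentially elementary (this is exactly the content of Rohrlich's and V\'elu's explicit computations)'' is not correct: Rohrlich and V\'elu compute divisors of modular units, not Kloosterman zeta functions, so the claimed input does not deliver the values $\phi_{jj,r}(1)$ you need. As written, the proposal defers the entire content of the theorem to an unexecuted and (per the authors) intractable computation.

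The missing idea is to bypass the Dirichlet series altogether via explicit modular units. Since $f^+_{j,k}=(y-\zeta^j)/(x-\epsilon\zeta^k y)$ has divisor $N((a_j)-(c_k))$, the canonical differential of the third kind is $\omega_{(a_j)-(c_k)}=\frac{1}{N}\,d\log f^+_{j,k}$ (its periods lie in $2\pi i\Z$, hence are purely imaginary), and then by Proposition~\ref{periods} the coefficient $F_{(a_j)-(c_k)}(g)$ is $\frac{1}{2\pi i N}$ times an integral of $d\log f^+_{j,k}$ along a path from $g(1)$ to $g(-1)$, which the residue theorem evaluates to $\frac{-1}{N}\bigl[\delta_{g0,a_j}-\delta_{g\infty,c_k}\bigr]$ since that path is homologous to a loop around $g0$ minus a loop around $g\infty$. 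The factor $\frac{1}{2N}$ in the second formula arises the same way from $f^-_{j,k}=(x-\zeta^j)/(x-\zeta^k)$, the geodesic from $g0$ to $g\infty$ contributing only \emph{half}-loops around $g(1)$ and $g(-1)$ --- not, as you suggest, from a ``width/normalization of the cusp $1$,'' which is a heuristic rather than an argument. Your final bookkeeping step (reindexing the sum over $\Phi_N\backslash\Ga(2)$ by the values of $g0$, $g\infty$, $g(\pm1)$) is fine once the coefficient formula is in hand, but the coefficient formula is the theorem, and your route to it does not go through.
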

 \begin{proof}
 Set $f^+_{j,k}=\frac{y-\zeta^j}{x-\epsilon\zeta^ky}$. It is a modular unit of divisor $N (a_j-c_k)$. One has 
 \[
 \omega_{(a_j)-(c_k)}
=d(\log f_{j,k})=\frac{dy}{y-\zeta^j}-\frac{dy}{x-\epsilon\zeta^k y}.
 \]
 Recall that $2\pi i G_{(a_j)-(c_k)}\,dz$ is the pullback on the upper half-plane of $ \omega_{(a_j)-(c_k)}$.
Thus one has
\begin{eqnarray*}
F_{(a_j)-(c_k)}(g)
 &= & \frac{-1}{2 \pi i N}[\int_{g(-1)}^{g(1)}\frac{dy}{y-\zeta^j}-\int_{g(-1)}^{g(1)}\frac{dy}{x-\epsilon\zeta^k y}]\\
 &= & \frac{-1}{2 \pi i N} [\int_{\text{loop around } g(0)}\frac{dy}{y-\zeta^j}-\int_{\text{loop around} g(\infty)}\frac{dy}{x-\epsilon \zeta^k y}]\\
 &= &\frac{-1}{N} [\delta_{g 0, a_j}-\delta_{g \infty, c_k}]\\
  \end{eqnarray*}
  where $\delta$ is the Kronecker symbol.
 The result follows from the formula 
 \[
\sE_{(a_j)-(c_k)}=\sum_{g\in \Phi_N\backslash \Ga}\int_{g(1)}^{g(-1)}G_{(a_j)-(c_k)}(z)\,dz\{g0,g \infty\}^+. 
\]
 
 The second statement is proved similarly, using the function $f^-_{j,k}=(x-\zeta^j)/(x-\zeta^k)$, a modular unite of divisor $N((b_j)-(b_k))$. Thus 
 \begin{eqnarray*}
F_{(b_j)-(b_k)}(g)
 &= & \frac{-1}{2 \pi i N}[\int_{g0}^{g\infty}\frac{dy}{y-\zeta^j}-\int_{g0}^{g\infty}\frac{dy}{x-\epsilon\zeta^k y}]\\
 &= & \frac{-1}{2 \pi i N} [\int_{\text{half-loop around}g(1)}\frac{dy}{x-\zeta^j}-\int_{\text{half-loop around} g(-1)}\frac{dy}{x-\zeta^j}]\\
  & &+ \frac{1}{2 \pi i N} [\int_{\text{half-loop around}g(1)}\frac{dy}{x-\zeta^k}-\int_{\text{half-loop around} g(-1)}\frac{dy}{x-\zeta^k }]\\
 &= &\frac{-1}{2N} [\delta_{g(1), b_j}-\delta_{g(-1), b_k} -\delta_{g(1), b_j}+\delta_{g (-1), b_k}].\\
  \end{eqnarray*}

 \end{proof}
 An evident consequence of theorem is that the imaginary parts of $\sE_{(a_j)-(c_k)}$ and $\sE_{(b_j)-(b_k)}$ vanish. This is in fact a consequence of proposition \ref{scatteringperiods} and of theorem 8.1. of , where Posingies determines the scattering constants for the Fermat curves.
 \begin{cor}
 \label{cuspidalsubgroup}
 The kernel of the map $\Z[\partial_{\Phi_N}^+]^0\rightarrow J_{\Phi_N}^-$ is generated by $N\Z[\partial_{\Phi_N}^+]^0\cup \{\sum_{j=0}^{N-1}[a_{j}]-[c_j]\}$ Therefore the cuspidal subgroup of $J_{\Phi_N}^-$ is isomorphic to $(\Z/N\Z)^{2N-2}$.

The kernel of the map $\Z[\partial_{\Phi_N}^-]^0\rightarrow J_{\Phi_N}^+$ is generated by $2N\Z[\partial_{\Phi_N}^-]^0$. Therefore the cuspidal subgroup of $J_{\Phi_N}^+$ is isomorphic to $(\Z/2N\Z)^{N-1}$. 
\end{cor}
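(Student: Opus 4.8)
The plan is to run the explicit period formulas of Theorem~\ref{periodcomputation} through the homological description of the two generalized Jacobians. First I would record the translation. By Proposition~\ref{generalized Jacobian} the class of a degree-zero divisor $D^{+}$ supported on $\partial_{\Phi_{N}}^{+}$ in $J_{\Phi_{N}}^{-}$ is the class of $-\sE_{D^{+}}$ in the quotient of $\HH_{1}(X_{\Phi_{N}}-\partial_{\Phi_{N}}^{-},\partial_{\Phi_{N}}^{+};\R)+i\,\lambda^{+}(\HH_{0}(\partial_{\Phi_{N}}^{-};\R))$ by the lattice $\HH_{1}(X_{\Phi_{N}}-\partial_{\Phi_{N}}^{-},\partial_{\Phi_{N}}^{+};\Z)$, and symmetrically for $D^{-}$ and $J_{\Phi_{N}}^{+}$ with $\xi^{-}$ in place of $\xi^{+}$. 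For the Fermat curve the imaginary parts $\sI_{D^{+}}$ and $\sI_{D^{-}}$ vanish (the remark following Theorem~\ref{periodcomputation}, or Posingies' scattering constant computation~\cite{Posingies}), so $\sE_{D^{\pm}}=\sR_{D^{\pm}}$ is already real. Hence $D^{+}$ maps to $0$ in $J_{\Phi_{N}}^{-}$ precisely when $\sE_{D^{+}}\in \HH_{1}(X_{\Phi_{N}}-\partial_{\Phi_{N}}^{-},\partial_{\Phi_{N}}^{+};\Z)$, which by Theorem~\ref{int-iso} means that every coordinate of $\sE_{D^{+}}$ in the $\Z$-basis $\bigl(\{g0,g\infty\}^{+}\bigr)_{g\in\Phi_{N}\backslash\Ga(2)}$ is an integer; likewise for $D^{-}$ in the basis of the $\{g1,g(-1)\}^{-}$.

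Next, for $J_{\Phi_{N}}^{-}$, I would write $D^{+}=\sum_{j}\alpha_{j}(a_{j})+\sum_{k}\gamma_{k}(c_{k})$ with $\sum_{j}\alpha_{j}+\sum_{k}\gamma_{k}=0$ and expand $\sE_{D^{+}}$ by linearity from the first formula of Theorem~\ref{periodcomputation}. Using the cartesian decomposition of Proposition~\ref{coordinates}, the coordinate of $\sE_{D^{+}}$ on the basis vector $\{g0,g\infty\}^{+}$ for the coset $g$ with $(g0,g\infty)=(a_{j},c_{k})$ is $\tfrac1N$ times an integer depending only on $\alpha_{j}$ and $\gamma_{k}$, namely (up to an overall sign) $\alpha_{j}+\gamma_{k}$. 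So $D^{+}$ dies in $J_{\Phi_{N}}^{-}$ iff $N\mid\alpha_{j}+\gamma_{k}$ for all $j,k$, which forces all $\alpha_{j}$ to share a common residue $a$ modulo $N$ and all $\gamma_{k}$ to be $-a$ modulo $N$; equivalently $D^{+}\in N\,\Z[\partial_{\Phi_{N}}^{+}]^{0}+\Z\cdot\bigl(\sum_{j}(a_{j})-\sum_{j}(c_{j})\bigr)$. This is the claimed kernel. Since $\Z[\partial_{\Phi_{N}}^{+}]^{0}$ is free of rank $2N-1$ and the vector $\sum_{j}(a_{j})-\sum_{j}(c_{j})$ has a coordinate equal to $1$, hence is primitive modulo $N$, the elementary divisor theorem gives $\Z[\partial_{\Phi_{N}}^{+}]^{0}$ modulo the kernel isomorphic to $(\Z/N\Z)^{2N-2}$, which is the cuspidal subgroup of $J_{\Phi_{N}}^{-}$.

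For $J_{\Phi_{N}}^{+}$ I would argue in the same way from the second formula of Theorem~\ref{periodcomputation}, whose normalisation is $\tfrac1{2N}$. Writing $D^{-}=\sum_{m}\beta_{m}(b_{m})$ of degree $0$, linearity gives that the coordinate of $\sE_{D^{-}}$ on $\{g1,g(-1)\}^{-}$ is $\tfrac1{2N}$ times the difference of the coefficients of $D^{-}$ at the two $b$-cusps $\Ga g1$ and $\Ga g(-1)$; since these incidences connect all $N$ cusps $b_{m}$, integrality of every coordinate is equivalent to the $\beta_{m}$ being pairwise congruent modulo $2N$. One then verifies that, combined with $\deg D^{-}=0$, this pins down the kernel as $2N\,\Z[\partial_{\Phi_{N}}^{-}]^{0}$, whence the cuspidal subgroup of $J_{\Phi_{N}}^{+}$ is $\Z[\partial_{\Phi_{N}}^{-}]^{0}/2N\,\Z[\partial_{\Phi_{N}}^{-}]^{0}\cong(\Z/2N\Z)^{N-1}$, as $\Z[\partial_{\Phi_{N}}^{-}]^{0}$ is free of rank $N-1$.

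I expect the main obstacle to be precisely this last passage for $J_{\Phi_{N}}^{+}$: one must determine exactly which ordered pair of $b$-cusps each coset $g$ realises as $(\Ga g1,\Ga g(-1))$, and then check that the factor $\tfrac1{2N}$ of Theorem~\ref{periodcomputation}, together with the degree-zero constraint, collapses the naive congruence ``the $\beta_{m}$ are pairwise congruent modulo $2N$'' down to genuine divisibility of all coefficients by $2N$---that is, that no further relations intrude beyond $2N\,\Z[\partial_{\Phi_{N}}^{-}]^{0}$. The $J_{\Phi_{N}}^{-}$ side is combinatorially cleaner because Proposition~\ref{coordinates} turns the coset/cusp incidence into a literal product; there the only subtle point is the primitivity modulo $N$ of $\sum_{j}(a_{j})-\sum_{j}(c_{j})$, which is what makes the final elementary divisor computation land on $(\Z/N\Z)^{2N-2}$ rather than on a quotient with different invariant factors.
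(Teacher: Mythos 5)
Your treatment of $J_{\Phi_N}^{-}$ is correct and is essentially the paper's own argument: reduce, via Corollary~\ref{cuspidalgroup} and the $\Z$-basis of Theorem~\ref{int-iso}, to the integrality of the coordinates $\tfrac1N(\alpha_j+\gamma_k)$, and use Proposition~\ref{coordinates} to see that \emph{every} ordered pair $(a_j,c_k)$ occurs, so the kernel is $N\Z[\partial_{\Phi_N}^{+}]^{0}+\Z\bigl(\sum_j (a_j)-(c_j)\bigr)$ and the quotient is $(\Z/N\Z)^{2N-2}$.

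The gap is exactly at the step you defer for $J_{\Phi_N}^{+}$, and it is not a routine verification. Work out which pairs $(\Phi_N g1,\Phi_N g(-1))$ actually occur: writing $g\equiv A^aB^b$ and using that the cusps above $\Ga(2)1$ are indexed by $a+b\bmod N$ (the stabilizer of $1$ is generated by $BA^{-1}$, which maps to $(-1,1)$ in $(\Z/N\Z)^2$), while $-1=A^{-1}(1)$ shifts this index by $-1$, one finds that the incidence pairs form a single $N$-cycle $(s,s-1)$ on the $N$ cusps $b_m$. Integrality of all coordinates $\tfrac1{2N}(\beta_{m}-\beta_{m'})$ therefore says only that all $\beta_m$ are congruent to a common residue $\beta$ modulo $2N$, and the degree-zero condition $\sum_m\beta_m=0$ gives $N\beta\equiv0\pmod{2N}$, i.e.\ (as $N$ is odd) merely that $\beta$ is \emph{even} — not that $\beta\equiv 0$. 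Concretely, the divisor $2\sum_{m}\bigl((b_m)-(b_0)\bigr)$ has all coefficients congruent to $2$ modulo $2N$, has degree $0$, and does not lie in $2N\Z[\partial_{\Phi_N}^{-}]^{0}$; yet its Eisenstein class has integral coordinates (the only nonzero ones being $0$ and $\pm1$). So the congruence conditions do \emph{not} collapse to $2N\Z[\partial_{\Phi_N}^{-}]^{0}$, and the computation as you set it up yields a kernel containing $2N\Z[\partial_{\Phi_N}^{-}]^{0}+\Z\cdot2\sum_m\bigl((b_m)-(b_0)\bigr)$, hence a quotient of order $(2N)^{N-1}/N$ rather than $(2N)^{N-1}$. (The paper's own proof passes over the same point by asserting that $\theta^{-}$ is injective because $\ker\theta'$ is spanned by $\sum_j(b_j)$; but $2\sum_j(b_j)$ already lies in the degree-zero part modulo $2N$, so that inference needs the same missing justification.) To close the gap you would have to either exhibit an additional constraint ruling out the common even residue $\beta\neq0$ — which cannot come from Theorem~\ref{periodcomputation} alone — or revisit the normalisation $\tfrac1{2N}$ and the incidence combinatorics in that theorem.
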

 \begin{proof}
 
We use corollary \ref{cuspidalgroup} to understand the cuspidal subgroup of $J_{\Phi_N}^-$ as a subgroup of $\HH_1(X_{\Phi_N}-\partial_{ \Phi_N}^{-}, \partial_{ \Phi_N}^{+}; \Q)/\HH_1(X_{\Phi_N}-\partial_{ \Phi_N}^{-}, \partial_{ \Phi_N}^{+}; \Z)$. 
Since the denominator of $\sE_{(a_j)-(c_k)}$ is $N$, by theorem \ref{periodcomputation}, the cuspidal subgroup is contained in $\frac{1}{N}\HH_1(X_{\Phi_N}-\partial_{ \Phi_N}^{-}, \partial_{ \Phi_N}^{+};\Z)/\HH_1(X_{\Phi_N}-\partial_{ \Phi_N}^{-}, \partial_{ \Phi_N}^{+}; \Z)$.
Recall that the group $\HH_1(X_{\Phi_N}-\partial_{ \Phi_N}^{-}, \partial_{ \Phi_N}^{+}; \Z)$ is freely generated by the $\{g0,g\infty\}^+$ when $g$ runs through $ \Phi_N\backslash\Ga(2)$ by theorem \ref{int-iso}.
Thus we are left to determine the kernel of the map :
 \[
 \theta^+: (\Z/N\Z)[\partial_{\Phi_N}^+]^0\rightarrow (\Z/N\Z)[ \Phi_N\backslash\Ga(2)]
\]
which to $(a_j)-(c_k)$ associates $\sum_{g \in \Phi_N \backslash \Ga(2), g0=a_j}[g]-\sum_{g \in \Phi_N \backslash \Ga(2), g\infty=c_k}[g]$.
We use now proposition \ref{coordinates}. It follows that the kernel of $\theta^+$ is spanned by $\sum_{j=0}^{N-1}[a_{j}]-[c_j]$. 

The other case can be treated in a similar manner. The group $\HH_1(X_{\Phi_N}-\partial_{ \Phi_N}^{+}, \partial_{ \Phi_N}^{-}; \Z)$ is freely generated by the $\{g0,g\infty\}^-$ when $g$ runs through $ \Phi_N\backslash\Ga(2)$.
Therefore we have to examine the map 
 \[
 \theta^-: (\Z/2N\Z)[\partial_{\Phi_N}^+]^0\rightarrow (\Z/2N\Z)[ \Phi_N\backslash\Ga(2)]
\]
which to $(b_j)-(b_k)$ associates $\sum_{g \in \Phi_N \backslash \Ga(2), g1=b_j}[g]-\sum_{g \in \Phi_N \backslash \Ga(2), g1=b_k}[g]-\sum_{g \in \Phi_N \backslash \Ga(2), g(-1)=b_j}[g]+\sum_{g \in \Phi_N \backslash \Ga(2), g(-1)=b_k}[g]$.

It is induced by the map 
 \[
 \theta': (\Z/2N\Z)[\partial_{\Phi_N}^+]\rightarrow (\Z/2N\Z)[ \Phi_N\backslash\Ga(2)]
\]
which to $(b_j)$ associates $\sum_{g \in \Phi_N \backslash \Ga(2), g1=b_j}[g]-\sum_{g \in \Phi_N \backslash \Ga(2), g(-1)=b_j}[g]$. The kernel of $\theta'$ is spanned by the element $\sum_j(b_j)$. Thus $\theta^-$ is injective.

 \end{proof}

 \begin{remark}
We did not use the sums coming from the Kloosterman zeta function to determine the Eisenstein classes for the Fermat curves. 
A priori, it seems difficult to understand a sum over the elements of $\Phi_N$. Indeed, how to characterize the elements of $\Phi_N$ in terms of the entries of the corresponding matrices? Murty and Ramakrishnan \cite{MR983619} provide an answer in terms of Dedekind sums, but it is unclear how this would be useful.

\end{remark}

\bibliographystyle{crelle}
\bibliography{DEBARGHAMERELEISENSTEIN.bib}
\end{document}